\newtheorem{theorem}{Theorem}[section] 
\newtheorem{lemma}[theorem]{Lemma}     
\newtheorem{corollary}[theorem]{Corollary}
\newtheorem{proposition}[theorem]{Proposition}
\newtheorem{example}[theorem]{Example}
\newtheorem{remark}[theorem]{Remark}
\newtheorem{definition}[theorem]{Definition}
\newcommand{\Ga}[1]{\Gamma_{{\beta},#1}}
\newcommand{\U}[1]{\mathbf{U}_{{\beta},#1}}
\newcommand{\V}[1]{\mathbf{W}_{{\beta},#1}}
\newcommand{\la}{\lambda}
\newcommand{\ga}{\gamma}
\newcommand{\ep}{\varepsilon}
\newcommand{\f}{\infty}
\newcommand{\de}{\alpha}
\newcommand{\al}{\alpha}
\newcommand{\si}{\sigma}
\renewcommand{\th}{\theta}
\newcommand{\ra}{\rightarrow}
\begin{document}

\title[]{Hausdorff dimension  of unique beta expansions}

\author{Derong Kong}
\author{Wenxia Li}

\address{Derong Kong: School of Mathematical Science, Yangzhou University,
    Yangzhou, JiangSu 225002, People's Republic of China}

\address{Wenxia Li: Department of Mathematics, East China Normal University, Shanghai 200241,
People's Republic of China}
\email{derongkong@126.com,\quad wxli@math.ecnu.edu.cn}

\begin{abstract}
Given an integer $N\ge 2$ and a real number ${\beta}>1$, let $\Gamma_{{\beta},N}$
be the set of  all $x=\sum_{i=1}^\infty {d_i}/{{\beta}^i}$ with
$d_i\in\{0,1,\cdots,N-1\}$ for all $i\ge 1$. The infinite sequence
$(d_i)$ is called a ${\beta}$-expansion of $x$. Let $\mathbf{U}_{{\beta},N}$ be the set
of all $x$'s in $\Gamma_{{\beta},N}$ which have  unique ${\beta}$-expansions.
We give  explicit formula of the Hausdorff dimension of $\mathbf{U}_{{\beta},N}$ for ${\beta}$  in any admissible interval $[{{\beta}}_L,{{\beta}}_U]$, where ${{\beta}_L}$ is a purely Parry number while ${{\beta}_U}$ is a transcendental number whose quasi-greedy expansion of $1$ is related to the classical Thue-Morse sequence. This allows us to calculate the Hausdorff dimension of $\U{N}$ for almost every $\beta>1$. In particular, this improves  the main results of  G{\'a}bor Kall{\'o}s (1999, 2001).
Moreover, we find that the  dimension function
$f({\beta})=\dim_H\mathbf{U}_{{\beta},N}$  fluctuates frequently for ${\beta}\in(1,N)$.

\medskip

\noindent{\it Keywords\/}: {unique beta expansion, Hausdorff dimension, generalized Thue-Morse sequence, admissible block, admissible interval, transcendental number.}

\medskip

\noindent{\bf{MSC}:  37B10, 11A67, 28A80}
\end{abstract}

\maketitle

\section{Introduction}\label{sec: Introduction}

Given an integer $N\ge 2$ and a real number ${\beta}>1$, we call the infinite
sequence $(d_i)$  a \emph{${\beta}$-expansion} of $x$ if we can write
\begin{equation*}
x=\sum_{i=1}^\f\frac{d_i}{{\beta}^i}
\end{equation*}
{with} $d_i\in\{0,1,\cdots,N-1\}$ for all $i\ge 1.$
 Let $\Ga{N}$ be the set of all such $x$'s, i.e.,
\begin{equation*}
\Ga{N}=\Big\{\sum_{i=1}^\f\frac{d_i}{{\beta}^i}: d_i\in\{0,1,\cdots,N-1\}, i\ge 1\Big\}.
\end{equation*}
Then $\Ga{N}$ is a self-similar set generated by
the \emph{iterated function systems }(IFS) $\{f_d(x)=(x+d)/{\beta}:
d\in\{0,1,\cdots,N-1\}\}$ (cf.~\cite{Falconer_1990}).
Let $\{0,1,\cdots,N-1\}^\f$ be the set of all expansions $(d_i)$ with each digit $d_i\in\{0,1,\cdots,N-1\}$.
We define the projection  map $\Pi_{\beta}$ from $\{0,1,\cdots, N-1\}^\f$  to $\Ga{N}$ by
\begin{equation}\label{eq: Pi}
\Pi_{\beta}((d_i))= \sum_{i=1}^\f\frac{d_i}{{\beta}^i}.
\end{equation}
When
${\beta}>N$, the IFS $\{f_d(\cdot): d\in\{0,1,\cdots,N-1\}\}$
satisfies the \emph{strong separation condition} (SSC), and then the map $\Pi_{\beta}$ is bijective which implies that
every point in $\Ga{N}$ has a unique ${\beta}$-expansion. When
${\beta}=N$, the IFS $\{f_d(\cdot): d\in\{0,1,\cdots,N-1\}\}$ fails
the SSC but satisfies the \emph{open set condition} (OSC). Then all
except for countably many points in $\Ga{N}$ have unique
${\beta}$-expansions.

However, when ${\beta}<N$, the IFS $\{f_d(\cdot):
d\in\{0,1,\cdots,N-1\}\}$ fails the OSC. In this case, $\Ga{N}=[0,
(N-1)/({\beta}-1)]$ and almost every point in $\Ga{N}$ have continuum of ${\beta}$-expansions (cf.~\cite{Sidorov_2003,Dajani_DeVries_2007, Sidorov_2007}).
This has close connections to representations of real numbers in non-integer bases. After the seminal works of R\'{e}nyi \cite{Renyi_1957} and Parry \cite{Parry_1960} ${\beta}$-expansions were widely considered from many aspects of mathematics, such as dynamical systems, measure theory, probability, number theory and so on (cf.~\cite{Schmidt_1980,Frougny_Solomyak_1992,Erdos_Joo_Komornik_1990,Dajani_Kraaikamp_2003,Petho_Tichy_1989,Sidorov_2003-1,DeVries_Komornik_2008,Tan_Wang_2011}).

In 1990 Erd\"{o}s, Jo\'{o} and Komornik \cite{Erdos_Joo_Komornik_1990} showed for $N=2$ that for ${\beta}\in(1, G)$ any internal point of $\Ga{N}$ has continuum of ${\beta}$-expansions, and for ${\beta}\in(G,2)$ there exist infinitely many points of $\Ga{N}$ having unique ${\beta}$-expansions (cf.~\cite{Glendinning_Sidorov_2001}), where $G=(1+\sqrt{5})/2$ is the golden ratio.
Recently, Baker \cite{Baker_2012} generalized their result and showed for $N\ge 2$ that there exists $G_N\in(1, N)$ defined by
\begin{equation}\label{eq: gn}
    G_N=\left\{
    \begin{array}{ll}
      k+1& \textrm{if}~ N=2k+1,\\
      \frac{k+\sqrt{k^2+4 k}}{2}& \textrm{if}~ N=2k,
    \end{array}\right.
\end{equation}
such that for each ${\beta}\in(1, G_N)$ any internal point of $\Ga{N}$  has  continuum of ${\beta}$-expansions, and for ${\beta}\in(G_N, N)$ there  exist infinitely many points in $\Ga{N}$ having unique ${\beta}$-expansions (cf.~\cite{Kong_Li_Dekking_2010}).

Let $\U{N}$ be the set of all $x$'s in $\Ga{N}$  which have  unique
${\beta}$-expansions, i.e.,
for any $x\in\U{N}$ there exists a unique sequence $(d_i)\in\{0,1,\cdots,N-1\}^\f$ such that
$
x=\sum_{i=1}^\f d_i/{\beta}^i.
$
When ${\beta}\in(1,N)$, the structure of $\U{N}$ is complex (cf.~\cite{Daroczy_Katai_1993,Darczy_Katai_1995,Glendinning_Sidorov_2001,Kallos_1999,Kallos_2001,Komornik_Loreti_2007,DeVries_Komornik_2008}).
Recently, De Vries and Komornik \cite{DeVries_Komornik_2008} showed
that there exists ${\beta}_c(N)\in(G_N,N)$ such that (see also \cite{Glendinning_Sidorov_2001, Kong_Li_Dekking_2010, DeVries_Komornik_2011})
\begin{itemize}
  \item if ${\beta}\in(G_N,{\beta}_c(N))$, then $|\U{N}|=\aleph_0$;
  \item if ${\beta}={\beta}_c(N)$, then $\dim_H\U{N}=0$ but $|\U{N}|=2^{\aleph_0}$;
  \item if ${\beta}\in({\beta}_c(N),N)$, then $0<\dim_H\U{N}<1$.
\end{itemize}
Here ${\beta}_c(N)$ is the Komornik-Loreti constant defined as the unique positive solution of the equation $
1=\sum_{i=1}^\f \la_i/{\beta}^i, $ where $(\la_i)=(\la_i(N))$ is given
by (cf.~\cite{Komornik_Loreti_2002})
\begin{equation}\label{eq: lambda}
\la_i(N)=\left\{\begin{array}{ll}
  k-1+\tau_i&\textrm{if}~N=2k,\\
  k+\tau_i-\tau_{i-1}&\textrm{if}~N=2k+1,
\end{array}\right.
\end{equation}
with  $(\tau_i)_{i=0}^\f$  the classical Thue-Morse sequence
starting at (cf.~\cite{Allouche_Shallit_1999})
\begin{equation*}
0110\,1001\;1001\,0110\cdots.
\end{equation*}
Allouche and Cosnard \cite{Allouche_Cosnard_2000} showed that ${\beta}_c(2)$ is a transcendental number. Later, Komornik and Loreti \cite{Komornik_Loreti_2002} showed that
${\beta}_c(N)$ is transcendental for any $N\ge 2$.

The purpose of  this paper is to investigate  the Hausdorff dimension of
$\U{N}$. From the above observation it follows that
\begin{itemize}
  \item  if ${\beta}\in(1,{\beta}_c(N)]$, then $\dim_H\U{N}=0$;
  \item if ${\beta}\in[N,\f)$, then $\dim_H\U{N}=\dim_H\Ga{N}=\log N/\log{\beta}$.
\end{itemize}
However, when ${\beta}\in({\beta}_c(N), N)$ we  know little about the Hausdorff dimension of $\U{N}$. When $N=2$, Dar\'{o}czy and K\'{a}tai \cite{Darczy_Katai_1995} gave a method to calculate the Hausdorff dimension of
 $\U{N}$ only if ${\beta}$ is a purely Parry number. When $N>2$, Kall\'{o}s \cite{Kallos_1999} showed that when ${\beta}\in[N-1, (N-1+\sqrt{N^2-2N+5})/2]$ the Hausdorff dimension of $\U{N}$ is given by $
\dim_H\U{N}=\log(N-2)/\log{\beta}.
$
Later in \cite{Kallos_2001} he investigated the Hausdorff dimension of $\U{N}$ for  ${\beta}\in[(N-1+\sqrt{N^2-2N+5})/2, N)$, and gave a method to calculate its Hausdorff dimension when ${\beta}$ is a purely Parry number.

 In this paper we improve the main results of Kall\'{o}s \cite{Kallos_1999, Kallos_2001}. In Theorem \ref{th: main results} we give the Hausdorff dimension of $\U{N}$ for ${\beta}$ in any admissible interval $[{\beta}_L, {\beta}_U]$,  where ${{\beta}_L}$ is a purely Parry number while ${{\beta}_U}$ is a transcendental number.  Moreover, we  show in Theorem \ref{th: main results-1} that all of these admissible intervals cover almost every point of $({\beta}_c(N), N)$. Therefore, we are able to calculate the Hausdorff dimension of $\U{N}$ for almost every $\beta>1$. In particular, we give  explicit formula for the Hausdorff dimension of $\U{N}$ when  ${\beta}$ is in any 1-level or 2-level admissible intervals $[{{\beta}_L},{{\beta}_U}]$ (see Theorem \ref{th: leve1 one} and \ref{th: leve two} for more explanation).

\begin{example}\label{ex£º1}
  Let $N=10$. By Theorem \ref{th: leve1 one}, Theorem \ref{th: leve two} and the above observation we plot in Figure \ref{fig:1} that the graph of the  dimension function
  $f({\beta})=\dim_H\U{10}$  for ${\beta}\in(1,110)$. In particular, we give a detailed plot of $f({\beta})$ for ${\beta}$ in the $1$-level and $2$-level admissible intervals in $({\beta}_c(10), 10)\approx(5.976, 10)$. Clearly, the
dimension function $f({\beta})$ fluctuates frequently for ${\beta}\in({\beta}_c(10), 10)$. In \cite{Komornik_Kong_Li_2014} we will show that $f(\beta)$ is continuous for $\beta>1$.

  \begin{figure}[h!]
{\centering  
  \includegraphics[width=6cm]{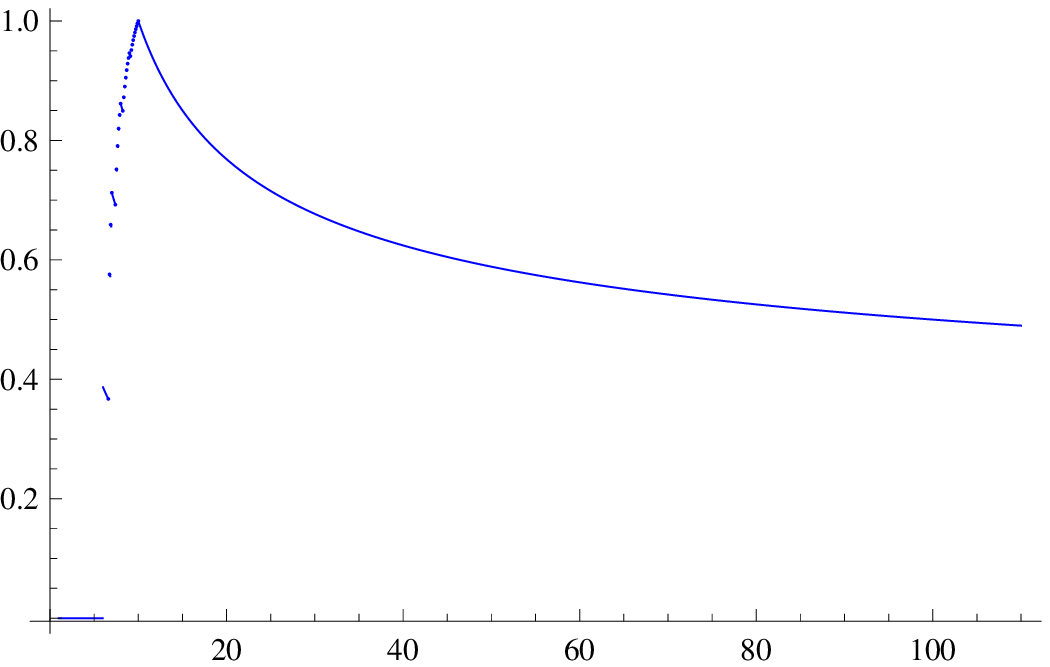}\quad\includegraphics[width=6cm]{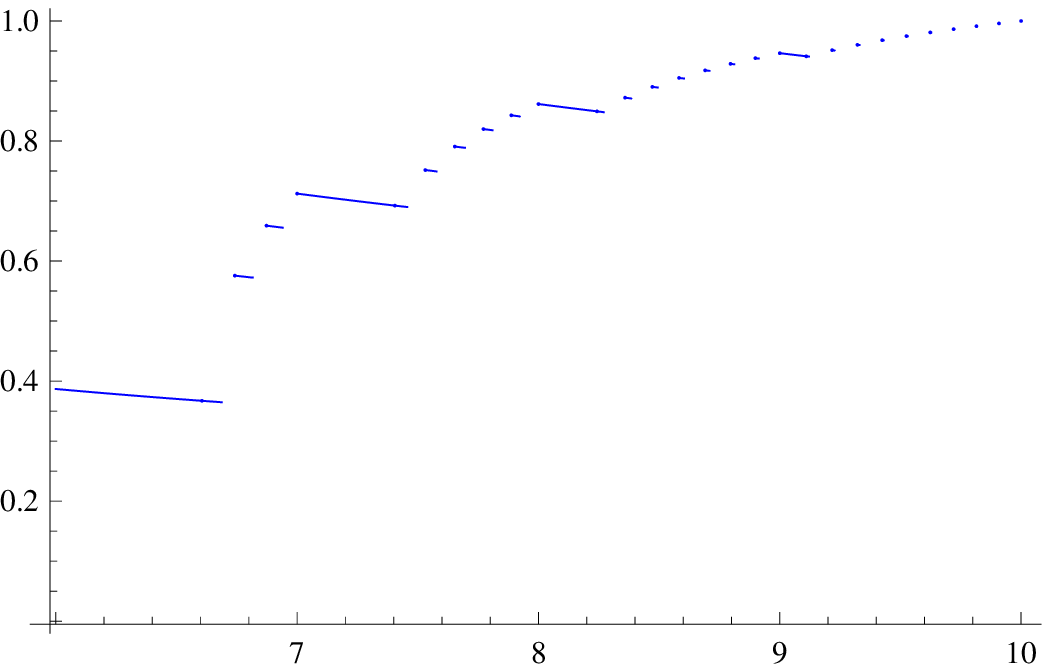}\\
  \caption{The Hausdorff dimension of $\U{10}$ for ${\beta}\in(1, 110)$ (left column), and for ${\beta}$ in the 1-level and 2-level admissible intervals in $({\beta}_c(10),10)\approx(5.976, 10)$ (right column).}\label{fig:1}
}\end{figure}

\end{example}

The structure of the paper is arranged as follows. In Section \ref{sec: main results} we introduce the admissible blocks, admissible intervals and the generalized Thue-Morse sequences, and state our main results as in  Theorems \ref{th: main results-0}, \ref{th: main results-1} and \ref{th: main results}. In Section \ref{sec: unique expansions} we presented some properties of unique beta expansions. The proofs of Theorems \ref{th: main results-0}, \ref{th: main results-1} and \ref{th: main results} are given in Sections \ref{sec: proof of th1}, \ref{sec: proof of th2} and \ref{sec: proof of th3}, respectively. In Section \ref{sec: examples} we consider some examples for which the Hausdorff dimension of $\U{N}$ can be calculated explicitly.

\section{Preliminary and main results}\label{sec: main results}

Given  an integer $N\ge 2$, for any ${\beta}\in(1, N)$   the set $\Ga{N}$  is a closed interval, i.e., $\Ga{N}=[0,(N-1)/({\beta}-1)]$. Then any real number in this interval $\Ga{N}$ has a ${\beta}$-expansion, some  of  them may have multiple ${\beta}$-expansions. Among these expansions
 we define the so-called \emph{greedy ${\beta}$-expansion} $(b_i(x))=(b_i)$ of $x\in\Ga{N}$ recursively as follows (cf.~\cite{Parry_1960}). For $x\in\Ga{N}$, if $b_i$ has already been defined for $1\le i<n$ (no condition if $n=1$), then $b_n$ is the largest element in $\{0,1,\cdots, N-1\}$ satisfying
\begin{equation*}
\frac{b_1}{{\beta}}+\frac{b_2}{{\beta}^2}+\cdots+\frac{b_n}{{\beta}^n}\le x.
\end{equation*}
One can  verify that $(b_i)$ is indeed a ${\beta}$-expansion of
$x$. Moreover, $(b_i)$ is the largest ${\beta}$-expansion of $x$ in the sense of lexicographical order among all ${\beta}$-expansions of $x$.

Accordingly, we define the so-called \emph{quasi-greedy ${\beta}$-expansion} $(a_i(x))=(a_i)$ of $x\in\Ga{N}$ recursively as follows (cf.~\cite{DeVries_Komornik_2008}). For $x=0$ we set $(a_i)=0^\f.$ For $x\in\Ga{N}\setminus\{0\}$, if $a_i$ has already been defined for $1\le i<n$ (no condition if $n=1$), then $a_n$ is the largest element in $\{0,1,\cdots, N-1\}$ satisfying
\begin{equation*}
\frac{a_1}{{\beta}}+\frac{a_2}{{\beta}^2}+\cdots+\frac{a_n}{{\beta}^n}<x.
\end{equation*}
One can  also verify that $(a_i)$ is indeed a ${\beta}$-expansion of
$x$. Clearly, the quasi-greedy ${\beta}$-expansion of $x$ is the
largest infinite ${\beta}$-expansion of $x$ in the sense of lexicographical
order among all ${\beta}$-expansions of $x$. Here  we call a
${\beta}$-expansion \emph{infinite} if the expansion has infinitely many
non-zero elements.

For a positive integer $p$ let $\{0,1,\cdots,N-1\}^p$ be the set of blocks $c_1\cdots c_p$ of length $p$ with each element $c_i\in\{0,1,\cdots,N-1\}$.  For two blocks $c_1\cdots c_p$ and $d_1\cdots d_q$ let $c_1\cdots c_p d_1\cdots d_q\in\{0,1,\cdots,N-1\}^{p+ q}$ denote their concatenation.
In particular,  let $(c_1\cdots c_p)^k$ denote the $k$ times concatenations of  $c_1\cdots c_p$ to itself, and let $(c_1\cdots c_p)^\f$ denote the infinite concatenations of $c_1\cdots c_p$ to itself.
For a digit $c\in\{0,1,\cdots,N-1\}$ its
\emph{reflection} $\overline{c}$ is defined by
\begin{equation*}
\overline{c}:=N-1-c.
\end{equation*}
Accordingly, if $c_i\in\{0,1,\cdots,N-1\}$ for
$i\ge 1$, we shall also write $\overline{c_1\cdots c_n}$ instead of
$\overline{c_1}\cdots\overline{c_n}$, and $\overline{c_1 c_2\cdots}$
instead of $\overline{c_1}\,\overline{c_2}\cdots$.
Finally, for a block $c_1\cdots c_p\in\{0,1,\cdots,N-1\}^{p}$ with $c_p>0$
 we set
\begin{equation*}
 c_1\cdots c_p^-:=c_1\cdots c_{p-1}(c_p-1).
\end{equation*}
Similarly, for a block $c_1\cdots c_p\in\{0,1,\cdots,N-1\}^{p}$ with $c_p<N-1$ we set
\[
c_1\cdots c_p^+:=c_1\cdots c_{p-1}(c_p+1).
\]
In particular, when $p=1$ we set $c_1\cdots c_p^-=c_1^-=c_1-1$ and $c_1\cdots c_p^+=c_1^+=c_1+1$.

 In the sequel we will use
lexicographical order between blocks and sequences.

\begin{definition}\label{def:admissible block}
A block $t_1\cdots t_p\in\{0,1,\cdots,N-1\}^p$ is called an \emph{admissible block} if $t_p<N-1$ and for any $1\le i\le p$ we have
\begin{equation*}
\overline{t_1\cdots t_p}\le t_i\cdots t_pt_1\cdots t_{i-1}\quad\textrm{and}\quad t_i\cdots t_p^+\,\overline{t_1\cdots t_{i-1}}\le{t_1\cdots t_p^+}.
\end{equation*}
\end{definition}
Clearly, there exist infinitely many admissible blocks.
In the following we introduce a generalized     Thue-Morse sequence which plays an essential role in this paper.
\begin{definition}\label{def:generalized Thue-Morse sequence}
For a block $t_1\cdots t_p\in\{0,1,\cdots,N-1\}^p$ with $t_p<N-1$, we  call the sequence $(\theta_i)=(\theta_i(t_1\cdots t_p^+))$ a \emph{generalized Thue-Morse sequence}  generated by the block $t_1\cdots t_p^+$ if $(\theta_i)$ can be defined by induction as follows. First, we set
\begin{equation*}
\theta_1\cdots \theta_p=t_1\cdots t_p^+.
\end{equation*}
Then, if $\theta_1\cdots\theta_{2^m p}$ is already defined for some nonnegative integer $m$, we set
\[
\theta_{2^m p+1}\cdots\theta_{2^{m+1}p}=\overline{\theta_1\cdots\theta_{2^m p}}\,^+.
\]
\end{definition}
We first discovered the generalized Thue-Morse sequences from the work of De Vries and  Komornik \cite{DeVries_Komornik_2008}. Later, we found that these sequences were previously  studied by Allouche and Cosnard \cite{Allouche_Cosnard_1983}, Komornik and Loreti \cite{Komornik_Loreti_2007}, et al.

 If  $N=2k+1$, then the  sequence $(\lambda_i(N))$ defined in Equation (\ref{eq: lambda})  is  exactly the generalized Thue-Morse sequence $(\theta_i(k+1))$. If $N=2k$,  by using Lemma \ref{lem:DK-1} one can also show that
$(\la_i(N))=(\th_i(k)).$ Thus, for any $N\ge 2$ we have
\begin{equation*}
(\la_i(N))=(\th_i(\lceil\frac{N}{2}\rceil)),
\end{equation*}
where $\lceil x\rceil$ denotes the least integer larger than or equal to $x$.

In the remainder of the paper we will  reserve the notation
$(\de_i({\beta}))$ especially for the quasi-greedy ${\beta}$-expansion of
$1\in\Ga{N}=[0,(N-1)/({\beta}- 1)]$ (since ${\beta}\in(1, N)$).

\begin{theorem}\label{th: main results-0}
For $N\ge 2$,  let $t_1\cdots t_p\in\{0,1,\cdots,N-1\}^p$. Then $t_1\cdots t_p$ is an admissible block if and only if $(\alpha_i({{\beta}_L}))=(t_1\cdots t_p)^\f$ and $(\alpha_i({{\beta}_U}))=(\theta_i(t_1\cdots t_p^+))$ for some bases ${{\beta}_L}, {{\beta}_U}\in[G_N, N)$, where $G_N$ is the critical base defined in {\rm(}\ref{eq: gn}{\rm)}.
Moreover, ${{\beta}_L}<{{\beta}_U}$, and ${{\beta}_L}$ is  algebraic while ${{\beta}_U}$ is  transcendental.
\end{theorem}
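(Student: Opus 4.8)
The plan is to set up a dictionary between the combinatorial condition ``$t_1\cdots t_p$ is admissible'' and the order-theoretic characterization of quasi-greedy expansions of $1$ due to Parry (and its extension by De Vries--Komornik), then run this in both directions. Recall that a sequence $(\alpha_i)$ with $\alpha_i \in \{0,1,\dots,N-1\}$ is the quasi-greedy $\beta$-expansion of $1$ for some $\beta \in (1,N)$ if and only if it is infinite and satisfies $\sigma^n((\alpha_i)) \le (\alpha_i)$ for all $n \ge 1$, where $\sigma$ is the shift; moreover, $\beta$ is then uniquely determined by $1 = \sum \alpha_i/\beta^i$, and $\beta \in [G_N, N)$ precisely when $(\alpha_i)$ lies lexicographically between the quasi-greedy expansion of $1$ in base $G_N$ and $(N-1)^\infty$ (one must check this lower cutoff carefully, since it is exactly what pins down membership in $[G_N,N)$ rather than merely $(1,N)$). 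So the first step is: given an admissible block $t_1\cdots t_p$, verify that both $(t_1\cdots t_p)^\infty$ and $(\theta_i(t_1\cdots t_p^+))$ are infinite sequences obeying the Parry shift inequality. For the periodic sequence $(t_1\cdots t_p)^\infty$ this is essentially the first family of inequalities $\overline{t_1\cdots t_p} \le t_i\cdots t_p t_1\cdots t_{i-1}$ in Definition~\ref{def:admissible block} combined with a standard argument that a cyclic-shift inequality at all phases upgrades to a full shift inequality on the infinite periodic word (one compares an arbitrary shift $\sigma^n$ with the periodic word block by block, using periodicity to reduce to phase $n \bmod p$); here one also uses $t_p < N-1$ to guarantee the expansion is genuinely infinite and to handle the boundary case where equality in a prefix forces looking at the next period. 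For the generalized Thue-Morse sequence $(\theta_i)$ the analysis is more delicate: one proves by induction on $m$ that the finite prefix $\theta_1\cdots\theta_{2^m p}$ satisfies the requisite self-inequalities, exploiting the reflection-and-$+1$ recursion $\theta_{2^m p+1}\cdots\theta_{2^{m+1}p} = \overline{\theta_1\cdots\theta_{2^m p}}\,^+$, and that the two families of inequalities in the admissibility definition (the one with reflections and $^+$) are exactly what is preserved under this substitution. This is where I expect the main obstacle to lie — the bookkeeping that the ``$+1$ on the last coordinate'' corrections do not destroy the inequalities and that the limiting infinite sequence inherits them, and it is also where Lemma~\ref{lem:DK-1} (and the De Vries--Komornik machinery) will be invoked.

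The converse direction is then a matter of reversing these implications: if $(\alpha_i(\beta_L)) = (t_1\cdots t_p)^\infty$ and $(\alpha_i(\beta_U)) = (\theta_i(t_1\cdots t_p^+))$ are quasi-greedy expansions of $1$ for bases in $[G_N,N)$, read off from the Parry shift inequalities the cyclic inequalities $\overline{t_1\cdots t_p} \le t_i\cdots t_p t_1\cdots t_{i-1}$ directly, and extract the second family $t_i\cdots t_p^+\,\overline{t_1\cdots t_{i-1}} \le t_1\cdots t_p^+$ from the structure of the Thue-Morse sequence — specifically, from the fact that the prefix of length $p$ of $(\theta_i)$ is $t_1\cdots t_p^+$ and the shift inequalities on $(\theta_i)$ force the reflected-block comparisons. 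One also needs $t_p < N-1$, which is forced because $\theta_p = t_p + 1 \le N-1$. Finally, $\beta_L < \beta_U$ follows from a direct lexicographic comparison: $(\alpha_i(\beta_L)) = (t_1\cdots t_p)^\infty$ while $(\alpha_i(\beta_U))$ begins with $t_1\cdots t_{p-1}(t_p+1)$, so $(\alpha_i(\beta_U)) > (\alpha_i(\beta_L))$ lexicographically, and since larger quasi-greedy expansion of $1$ corresponds to larger base (the map $\beta \mapsto (\alpha_i(\beta))$ is strictly increasing), we get $\beta_U > \beta_L$; the algebraicity of $\beta_L$ is immediate since its quasi-greedy expansion is eventually periodic, so $\beta_L$ satisfies a polynomial equation with integer coefficients obtained by clearing denominators in $1 = \sum_{i=1}^p t_i/\beta_L^{i} \cdot (1-\beta_L^{-p})^{-1}$. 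The transcendence of $\beta_U$ is the remaining substantive point: one adapts the argument of Allouche--Cosnard and Komornik--Loreti for the classical case — the key input is that the generalized Thue-Morse sequence $(\theta_i)$, being the image of the Thue-Morse sequence under a fixed finite coding, is automatic but not eventually periodic, and then one applies a transcendence criterion (Mahler's method, or the subsequent results of Adamczewski--Bugeaud on the complexity of $\beta$-expansions of algebraic numbers) to conclude that $1 = \sum \theta_i/\beta_U^i$ cannot hold for algebraic $\beta_U$; I would cite \cite{Allouche_Cosnard_2000,Komornik_Loreti_2002} and indicate the (minor) modifications needed to cover the generalized block $t_1\cdots t_p^+$ rather than the single digit.
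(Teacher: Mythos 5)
Your plan matches the paper's proof in all essentials: the Parry/quasi-greedy characterization (the paper's Proposition \ref{prop:1}) is used in both directions, the shift inequalities for $(\theta_i(t_1\cdots t_p^+))$ are established by induction along the doubling blocks $\theta_1\cdots\theta_{2^np}$ (the paper's Lemma \ref{lem: keylem}), the containment in $[G_N,N)$ is checked via the comparison with $(t_1\overline{t_1})^\infty$, the inequality $\beta_L<\beta_U$ follows from lexicographic monotonicity, and the transcendence of $\beta_U$ is obtained exactly as you indicate by writing $(\theta_i)$ as a coding of the classical Thue--Morse sequence (the paper's Lemma \ref{lem:DK-1}) and reducing $1=\sum\theta_\ell\beta_U^{-\ell}$ to an identity expressing $\sum\tau_i(\beta_U^{-p})^i$ as an algebraic function of $\beta_U$, contradicting Mahler's theorem. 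The only detail you gloss over is in the converse direction, where the Parry condition on $(\theta_i)$ gives $t_i\cdots t_pt_1\cdots t_{i-1}\ge\overline{t_1\cdots t_p^+}=\overline{t_1\cdots t_p}^-$ rather than $\ge\overline{t_1\cdots t_p}$, so one must separately rule out the equality case $\overline{t_i\cdots t_pt_1\cdots t_{i-1}}=t_1\cdots t_p^+$ (the short contradiction argument inside the paper's Proposition \ref{prop:DK-1}).
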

We point out that Theorem \ref{th: main results-0} generalizes some results in \cite{Allouche_Cosnard_2000} and \cite{Komornik_Loreti_2002}.
Here we call the transcendental numbers ${\beta}_U$   \emph{De Vries-Komornik constants} since these numbers were first studied by De Vries and Komornik in \cite{DeVries_Komornik_2008}.
Later in Proposition \ref{prop:DK-1} and Theorem \ref{th: DK-admissible} we will show that $t_1\cdots t_p$ is an admissible block if and only if $(\alpha_i({\beta}_U))=(\th_i(t_1\cdots t_p^+))$, if and only if $(\th_i(t_1\cdots t_p^+))$ is the unique ${\beta}_U$-expansion of $1$.
\begin{definition}\label{def:admissible interval}
  The closed interval $[{{\beta}_L},{{\beta}_U}]$ given in Theorem \ref{th: main results-0} is called an \emph{admissible interval} generated by $t_1\cdots t_p$ {\rm(}simply called, \emph{admissible interval}{\rm)} if
\begin{equation*}
  (\alpha_i({{\beta}_L}))=(t_1\cdots t_p)^\f\quad\textrm{and}\quad (\alpha_i({{\beta}_U}))=(\th_i(t_1\cdots t_p^+)).
  \end{equation*}
\end{definition}
Since we have infinitely many  admissible intervals, it is worthwhile to investigate the size of union of these admissible intervals and  the relationship between them as well.
\begin{theorem}\label{th: main results-1}
   The union of all admissible intervals covers almost every point of $(\beta_c(N), N)$, where $\beta_c(N)$ is the Komornik-Loreti constant. Moreover, for any two admissible intervals $[{\al_L},{\al_U}]$ and  $[{{\beta}_L},{{\beta}_U}]$, either  $[\al_L,\al_U]\cap[{\beta}_L,{\beta}_U]=\emptyset$ or $\al_U={\beta}_U$.
  \end{theorem}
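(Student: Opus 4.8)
The plan is to split the statement into two parts: the almost-everywhere covering statement, and the nesting/disjointness dichotomy for two admissible intervals.

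For the covering statement, the idea is to approximate an arbitrary $\beta \in (\beta_c(N), N)$ from below by the left endpoints of admissible intervals. First I would recall from Section \ref{sec: unique expansions} (and the characterization in Theorem \ref{th: main results-0}) that a base $\gamma \in [G_N, N)$ is the left endpoint $\beta_L$ of an admissible interval precisely when its quasi-greedy expansion of $1$ is of the form $(t_1\cdots t_p)^\infty$ for an admissible block $t_1\cdots t_p$; such $\gamma$ are exactly the purely Parry numbers in $[G_N,N)$ whose quasi-greedy expansion satisfies the admissibility inequalities of Definition \ref{def:admissible block}. For a fixed $\beta$, one truncates its quasi-greedy expansion $(\alpha_i(\beta))$ at a well-chosen length $p$ and perturbs the last nonzero digit downward to obtain a block $t_1\cdots t_p$; the key lemma is that for a suitable increasing sequence of truncation lengths $p_n$, these blocks can be made admissible, and the associated bases $\beta_L^{(n)}$ increase to $\beta$. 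The set of $\beta$ not captured this way would then be confined to bases whose quasi-greedy expansions fail the admissibility conditions at every scale; I would show this exceptional set is contained in a countable union of "non-admissible" pieces of measure zero — in the $N=2$ setting this is the well-studied set of bases where $1$ has a unique expansion that is not of eventually periodic/Thue–Morse type, and the same measure-zero argument (via the fact that the relevant symbolic constraint defines a set of boxes with summable diameters, or via a Borel–Cantelli / self-similarity argument on the bifurcation set) carries over. The main obstacle is exactly this measure-zero estimate: one must argue that the bases lying in no admissible interval form a Lebesgue-null set, which requires a careful combinatorial description of which quasi-greedy expansions arise and a covering argument showing their projections have measure zero.

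For the dichotomy, suppose $[\alpha_L,\alpha_U]$ and $[\beta_L,\beta_U]$ are admissible intervals generated by blocks $s_1\cdots s_q$ and $t_1\cdots t_p$ respectively, and that they intersect. Without loss of generality assume $\alpha_L \le \beta_L$. Since $\gamma \mapsto (\alpha_i(\gamma))$ is strictly increasing in lexicographic order (a standard monotonicity property of quasi-greedy expansions, recalled in Section \ref{sec: unique expansions}), intersection forces $\alpha_L \le \beta_L \le \alpha_U$, hence lexicographically $(\alpha_i(\alpha_L)) \le (\alpha_i(\beta_L)) \le (\alpha_i(\alpha_U))$, i.e. $(s_1\cdots s_q)^\infty \le (t_1\cdots t_p)^\infty \le (\theta_i(s_1\cdots s_q^+))$. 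The goal is to deduce $(\theta_i(t_1\cdots t_p^+)) = (\theta_i(s_1\cdots s_q^+))$, which gives $\beta_U = \alpha_U$. The key structural fact is that the generalized Thue–Morse sequence $(\theta_i(s_1\cdots s_q^+))$ is, by Definition \ref{def:generalized Thue-Morse sequence}, built from the prefix $s_1\cdots s_q^+$ by the doubling-and-reflecting rule; so any admissible block $t_1\cdots t_p$ whose periodic sequence $(t_1\cdots t_p)^\infty$ lies between $(s_1\cdots s_q)^\infty$ and $(\theta_i(s_1\cdots s_q^+))$ must have $t_1\cdots t_p^+$ equal to one of the blocks $\theta_1\cdots\theta_{2^m q}$ appearing in the inductive construction — this is where the admissibility inequalities of Definition \ref{def:admissible block} and the properties of $(\theta_i)$ (to be established in Proposition \ref{prop:DK-1} and Theorem \ref{th: DK-admissible}) are used to rule out any intermediate block. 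Once $t_1\cdots t_p^+$ is a prefix-block $\theta_1\cdots\theta_{2^m q}$ of the construction for $s_1\cdots s_q^+$, the self-similar nature of the doubling rule gives $(\theta_i(t_1\cdots t_p^+)) = (\theta_i(s_1\cdots s_q^+))$ directly, and therefore $\alpha_U = \beta_U$. The delicate point here will be the lexicographic squeeze argument showing no "new" admissible block can sneak strictly between the period of $s_1\cdots s_q$ and its generalized Thue–Morse limit without being one of the construction blocks; this rests on the precise admissibility inequalities and is the combinatorial heart of the second half.

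Throughout, I would lean on the results quoted earlier — Theorem \ref{th: main results-0} for the bijection between admissible blocks and pairs $(\beta_L,\beta_U)$, and Lemma \ref{lem:DK-1}, Proposition \ref{prop:DK-1} and Theorem \ref{th: DK-admissible} for the fine properties of generalized Thue–Morse sequences and their relation to unique expansions of $1$. The covering statement is essentially a measure-theoretic density argument using monotonicity of quasi-greedy expansions; the dichotomy is purely combinatorial. I expect the measure-zero estimate in the first part to be the main technical obstacle, since it requires quantitative control over how often an arbitrary quasi-greedy expansion can fail admissibility.
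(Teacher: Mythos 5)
Your overall architecture (an a.e.\ covering statement plus a combinatorial dichotomy, with the exceptional set tied to bases in which $1$ has essentially unique expansions) matches the paper's, but both halves have genuine gaps, and in the first half the mechanism you propose would not actually prove the statement. Producing admissible blocks from truncations of $(\alpha_i(\beta))$ whose left endpoints $\beta_L^{(n)}$ increase to $\beta$ does not place $\beta$ in any admissible interval: you would also need the corresponding right endpoints $\beta_U^{(n)}$ to exceed $\beta$, and your sketch never addresses them. The paper's argument is different and does not need any new measure estimate: it quotes the known fact (Proposition \ref{prop: unique expansion of 1}) that the closure $\overline{\mathbf{U}}$ of the set of bases in which $1$ has a unique expansion is Lebesgue-null, and then shows that every $\beta\in(\beta_c(N),N)\setminus\overline{\mathbf{U}}$ lies in a \emph{single}, explicitly constructed admissible interval. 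Concretely, one takes the least $m$ with $\alpha_{m+1}\alpha_{m+2}\cdots\le\overline{\alpha_1\alpha_2\cdots}$, uses the minimality of $m$ to verify that $\alpha_1\cdots\alpha_m^-$ is an admissible block, and checks $(\alpha_1\cdots\alpha_m^-)^\infty<(\alpha_i(\beta))<(\theta_i(\alpha_1\cdots\alpha_m))$, so that $\beta\in[\beta_L,\beta_U]$. Your ``truncate and decrement the last digit'' instinct is the right one, but you identify neither the correct truncation point nor the admissibility verification, and the measure-zero step you flag as the main obstacle is not something to be re-derived by Borel--Cantelli; the whole point is to reduce the exceptional set to $\overline{\mathbf{U}}$ and cite the known result.

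For the dichotomy, your reduction to the claim that any admissible block $t_1\cdots t_p$ with $(s_1\cdots s_q)^\infty\le(t_1\cdots t_p)^\infty\le(\theta_i(s_1\cdots s_q^+))$ must have $t_1\cdots t_p^+$ equal to a construction block $\theta_1\cdots\theta_{2^mq}$ is a plausible reformulation of the conclusion, but it is precisely the part you leave unproved (you call it ``the combinatorial heart''). The paper instead establishes two monotonicity implications, $\alpha_L<\beta_U\Rightarrow\alpha_U\le\beta_U$ and $\alpha_L\ge\beta_L\Rightarrow\alpha_U\ge\beta_U$, each split into the cases $q<p$ and $q\ge p$ (Lemmas \ref{lem: sect6-1}, \ref{prop:6-1}, \ref{lem: sect6-2}, \ref{prop:6-2}), and combines them to get $\alpha_L\in[\beta_L,\beta_U]\Rightarrow\alpha_U=\beta_U$; the proofs are inductions along the doubling scales using Lemma \ref{lem: keylem}, Lemma \ref{lem: sect6-0} and Theorem \ref{th: DK-admissible}, with the equality cases handled by showing that any failure of strict inequality propagates to force full periodicity and hence a contradiction with admissibility. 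Whichever formulation you adopt, this case analysis is the actual content of the second half of the theorem, and your proposal does not supply it.
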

  Theorem \ref{th: main results-1} says that for any two admissible intervals,  either  they are separated from each other or they have the same right endpoint. Now we state our main result on the Hausdorff dimension of $\U{N}$.
\begin{theorem}\label{th: main results}
For $N\ge 2$, let $[{{\beta}_L}, {{\beta}_U}]$ be an admissible interval generated by $t_1\cdots t_p$.
Then for any ${\beta}\in[{{\beta}_L}, {{\beta}_U}]$ the Hausdorff dimension of $\U{N}$ is given by
\begin{equation*}
 \dim_H\U{N}=\frac{h(Z_{t_1\cdots t_p})}{\log{\beta}},
\end{equation*}
 where $h(Z_{t_1\cdots t_p})$ is the topological entropy of the subshift of finite type
\begin{equation*}
Z_{t_1\cdots t_p}:=\big\{(d_i): \overline{t_1\cdots t_p}\le d_n\cdots d_{n+p-1}\le t_1\cdots t_p, n\ge 1\big\}.
\end{equation*}
\end{theorem}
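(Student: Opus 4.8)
The plan is to sandwich $\U{N}$ between two subsets whose dimensions we can compute, using the symbolic characterization of unique expansions together with the defining endpoints of the admissible interval $[{\beta}_L,{\beta}_U]$. First I would recall (from Section~\ref{sec: unique expansions}) the classical lexicographic criterion: a sequence $(d_i)\in\{0,\dots,N-1\}^\f$ is the unique ${\beta}$-expansion of its value $\Pi_{\beta}((d_i))$ if and only if it satisfies, for every $n\ge 1$,
\[
d_{n+1}d_{n+2}\cdots < (\alpha_i({\beta}))\quad\text{whenever}\quad d_n<N-1,\qquad
\overline{d_{n+1}d_{n+2}\cdots} < (\alpha_i({\beta}))\quad\text{whenever}\quad d_n>0,
\]
where $(\alpha_i({\beta}))$ is the quasi-greedy expansion of $1$. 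Denote by $\mathcal{U}_{\beta}'\subset\{0,\dots,N-1\}^\f$ the set of such sequences; then $\U{N}=\Pi_{\beta}(\mathcal{U}_{\beta}')$, and since $\Pi_{\beta}$ restricted to $\mathcal{U}_{\beta}'$ is a bi-Lipschitz-type coding (Hölder in both directions with exponents governed by $\log{\beta}$), one has $\dim_H\U{N}=h(\mathcal{U}_{\beta}')/\log{\beta}$, where $h$ denotes topological entropy of the subshift $\overline{\mathcal{U}_{\beta}'}$ (its closure, the so-called univoque subshift). This reduces the theorem to the purely symbolic claim that $h(\overline{\mathcal{U}_{\beta}'})=h(Z_{t_1\cdots t_p})$ for every ${\beta}\in[{\beta}_L,{\beta}_U]$.

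The heart of the argument is then a two-sided comparison of subshifts. Since entropy is monotone under inclusion of subshifts and $\overline{\mathcal{U}_{\beta}'}$ is increasing in ${\beta}$ (a larger base makes the quasi-greedy expansion lexicographically larger, hence loosens the admissibility constraints), it suffices to prove
\[
h\big(\overline{\mathcal{U}_{{\beta}_L}'}\big)\;=\;h(Z_{t_1\cdots t_p})\;=\;h\big(\overline{\mathcal{U}_{{\beta}_U}'}\big).
\]
For the lower endpoint: $(\alpha_i({\beta}_L))=(t_1\cdots t_p)^\f$, so the admissibility conditions become, for all $n$, that $d_{n+1}\cdots d_{n+p}\le t_1\cdots t_p$ when $d_n<N-1$ and (via reflection) $d_{n+1}\cdots d_{n+p}\ge\overline{t_1\cdots t_p}$ when $d_n>0$. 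I would check that these conditions are equivalent—after absorbing the boundary cases $d_n=N-1$ and $d_n=0$, which force $d_{n+1}\cdots$ to begin with $\overline{t_1\cdots t_p}$ respectively $t_1\cdots t_p$, using the admissibility inequalities of Definition~\ref{def:admissible block}—to the single sliding-block condition $\overline{t_1\cdots t_p}\le d_n\cdots d_{n+p-1}\le t_1\cdots t_p$ for all $n$, i.e. exactly membership in $Z_{t_1\cdots t_p}$. More precisely, $\overline{\mathcal{U}_{{\beta}_L}'}$ and $Z_{t_1\cdots t_p}$ differ only on a set of sequences of zero entropy (those that are eventually all-$0$, all-$(N-1)$, or periodic with period dividing $p$), so they share the same entropy. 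For the upper endpoint: $(\alpha_i({\beta}_U))=(\theta_i(t_1\cdots t_p^+))$, the generalized Thue-Morse sequence; here I would invoke the known fact (this is the generalized Komornik-Loreti phenomenon, cf.~\cite{Komornik_Loreti_2007,DeVries_Komornik_2008}) that the Thue-Morse-type sequence is the pointwise supremum of the "window" it defines, so that the univoque subshift at base ${\beta}_U$ has a combinatorial description with the same growth rate of admissible words as $Z_{t_1\cdots t_p}$—indeed one shows that any word admissible for $\overline{\mathcal{U}_{{\beta}_U}'}$ of length $n$ agrees, up to a bounded-multiplicity correction coming from the self-similar block structure of $(\theta_i)$, with a word of $Z_{t_1\cdots t_p}$, giving $h(\overline{\mathcal{U}_{{\beta}_U}'})\le h(Z_{t_1\cdots t_p})$; the reverse inequality is monotonicity since ${\beta}_U\ge{\beta}_L$.

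The main obstacle, I expect, is the upper-endpoint entropy equality $h(\overline{\mathcal{U}_{{\beta}_U}'})=h(Z_{t_1\cdots t_p})$. The difficulty is that the quasi-greedy expansion $(\theta_i(t_1\cdots t_p^+))$ is a non-eventually-periodic sequence, so $\overline{\mathcal{U}_{{\beta}_U}'}$ is a genuinely larger subshift than $Z_{t_1\cdots t_p}$—it is not of finite type—and one must argue that this enlargement costs no entropy. The right tool is the hierarchical block structure in Definition~\ref{def:generalized Thue-Morse sequence}: $(\theta_i)$ is built from $2^m$-fold concatenations $\theta_1\cdots\theta_{2^m p}$ together with their reflected-and-incremented copies, so the extra admissible words in $\overline{\mathcal{U}_{{\beta}_U}'}$ that are not in $Z_{t_1\cdots t_p}$ can be organized, level by level, into a countable union of "renewal" classes each of which is either a small perturbation of a $Z_{t_1\cdots t_p}$-word or carries strictly smaller exponential growth; summing a convergent geometric-type series over levels $m$ shows the word-counting function of $\overline{\mathcal{U}_{{\beta}_U}'}$ is at most $C(n)\,\#\{n\text{-words of }Z_{t_1\cdots t_p}\}$ with $C(n)$ subexponential. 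I would also need the basic facts that $Z_{t_1\cdots t_p}$ is a mixing subshift of finite type with positive entropy (so its entropy equals the log of the spectral radius of the transition matrix on blocks of length $p$, which makes the final formula effective), and that the coding map $\Pi_{\beta}$ does not collapse dimension—this last point follows because on $\mathcal{U}_{\beta}'$ distinct sequences map to distinct points with a uniform exponential separation of cylinders, a standard consequence of ${\beta}>1$ and the bound $N-1<{\beta}^p$-type spacing estimates available once ${\beta}\ge G_N$.
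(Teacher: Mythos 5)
Your reduction to a symbolic entropy computation and your treatment of the lower endpoint are essentially sound and parallel to what the paper does on the subinterval $[{\beta}_L,{\beta}_1]$ (where $(\alpha_i({\beta}_1))=(t_1\cdots t_p^+\,\overline{t_1\cdots t_p^+})^\f$): the paper sandwiches $\V{N}$ between $\Pi_{\beta}(Z_{t_1\cdots t_p})$ minus a countable set and $\Pi_{\beta}(Z_{t_1\cdots t_p})$ itself (Lemmas \ref{lem:1} and \ref{lem:2}), and then computes $\dim_H\Pi_{\beta}(Z_{t_1\cdots t_p})$ by exhibiting it as a graph-directed set satisfying the SSC (Lemma \ref{lem:2'}), rather than by your ``bi-Lipschitz-type coding'' assertion. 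That assertion, by the way, is itself a gap: the identity $\dim_H\U{N}=h(\overline{\mathcal{U}_{\beta}'})/\log{\beta}$ for the full univoque subshift at an \emph{arbitrary} base is a nontrivial theorem (the upper bound is a covering argument, but the lower bound needs a separation structure), and the paper deliberately avoids ever needing it by only computing dimensions of SSC graph-directed images of subshifts of finite type.

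The genuine gap is your upper-endpoint step. You correctly identify that $h(\overline{\mathcal{U}_{{\beta}_U}'})=h(Z_{t_1\cdots t_p})$ is the main obstacle, since the univoque subshift at ${\beta}_U$ is not of finite type, but your proposed resolution (organizing the extra admissible words into ``renewal classes'' with subexponential multiplicity) is a description of what you hope is true, not an argument; no mechanism is given for why the hierarchical structure of $(\theta_i)$ forces the correction factor to be subexponential. The paper's actual mechanism is different and is the idea you are missing: it introduces the intermediate bases ${\beta}_n$ with $(\alpha_i({\beta}_n))=(\th_1\cdots\th_{2^{n-1}p}\,\overline{\th_1\cdots\th_{2^{n-1}p}})^\f$, observes that each doubled block $\th_1\cdots\th_{2^{n}p}^-$ is again admissible, and applies the finite-type sandwich argument on each overlapping interval $[{\beta}_n,{\beta}_{n+1}]$. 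Because the dimension formulas coming from consecutive blocks must agree at the common point ${\beta}_n$, all the entropies $h(Z_{\th_1\cdots\th_{2^np}^-})$ collapse to $h(Z_{t_1\cdots t_p})$, which proves the formula on $[{\beta}_L,{\beta}_n]$ for every $n$, hence on $[{\beta}_L,{\beta}_U)$; the single remaining point ${\beta}_U$ is handled by the continuity of ${\beta}\mapsto\dim_H\U{N}$ established in \cite{Komornik_Kong_Li_2014}. Without either this doubling-and-matching argument or a genuine proof of your subexponential counting claim, your proof does not close.
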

 We  point out  that when $N=2$ Barrera \cite{Barrera_2013} investigated the topological entropy of $\U{N}$. We also point out  that Theorem \ref{th: main results}  generalizes some results in \cite{Darczy_Katai_1995, Kallos_1999, Kallos_2001, Baatz_Komornik_2011}. This will be explained in Section \ref{sec: examples} via some examples for which the Hausdorff dimension of $\U{N}$ can be calculated explicitly.

\section{Properties of unique expansions}\label{sec: unique expansions}
 Recall that $(\alpha_i({\beta}))$ is the quasi-greedy
 ${\beta}$-expansion of $1$. The following characterization of $(\alpha_i({\beta}))$ can be proved by a slight modification of the proof of \cite[Proposition 2.3]{DeVries_Komornik_2008} (see also, \cite[Theorem 2.2]{Baiocchi_Komornik_2007}).
\begin{proposition}\label{prop:1}
Let $N\ge 2$ and $(\alpha_i({\beta}))$ be the quasi-greedy ${\beta}$-expansion of $1$ w.r.t. the digit set $\{0,1,\cdots,N-1\}$. Then the map ${\beta}\ra(\alpha_i({\beta}))$ is a strictly increasing bijection from the  interval $(1, N]$ onto the set of all infinite sequences $(\ga_i)\in\{0,1,\cdots,N-1\}^\f$ satisfying
\begin{equation*}
  \ga_{k+1}\ga_{k+2}\cdots\le\ga_1\ga_2\cdots\quad\textrm{for all}~k\ge 0.
\end{equation*}
  Moreover, the map ${\beta}\ra(\alpha_i({\beta}))$ is continuous w.r.t. the topology in $\{0,1,\cdots,N-1\}^\f$ induced by the
  metric defined by $d\big((c_i), (d_i)\big)=2^{-\min\{j: c_j\ne
  d_j\}}$.
\end{proposition}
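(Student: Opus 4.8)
The plan is to establish Proposition~\ref{prop:1} by adapting the classical characterization of quasi-greedy expansions of $1$ (as in \cite[Proposition 2.3]{DeVries_Komornik_2008}) to the present digit set $\{0,1,\dots,N-1\}$ and base range ${\beta}\in(1,N]$. First I would verify the \emph{admissibility direction}: given ${\beta}\in(1,N]$, one shows that the quasi-greedy expansion $(\alpha_i({\beta}))$ of $1$ is infinite (this is automatic from the definition, since $1>0$ forces infinitely many nonzero digits) and satisfies the lexicographic inequality $\alpha_{k+1}\alpha_{k+2}\cdots\le\alpha_1\alpha_2\cdots$ for all $k\ge 0$. The key identity here is that for the quasi-greedy expansion, each tail satisfies $\sum_{i\ge 1}\alpha_{k+i}/{\beta}^i\le 1$, together with the maximality built into the recursive definition of $a_n$; comparing the tail expansion of the ${\beta}^{-k}$-scaled point against the quasi-greedy expansion of $1$ and invoking that the quasi-greedy expansion is the lexicographically largest \emph{infinite} expansion gives the inequality. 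One must be slightly careful that equality $\alpha_{k+1}\alpha_{k+2}\cdots=\alpha_1\alpha_2\cdots$ is permitted (it occurs, e.g., when $(\alpha_i)$ is periodic), which is why the inequality is non-strict.

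Next I would handle the \emph{converse direction}: given an infinite sequence $(\gamma_i)\in\{0,1,\dots,N-1\}^\f$ with $\gamma_{k+1}\gamma_{k+2}\cdots\le\gamma_1\gamma_2\cdots$ for all $k\ge0$, produce a base ${\beta}\in(1,N]$ with $(\alpha_i({\beta}))=(\gamma_i)$. The natural candidate is the ${\beta}$ solving $\sum_{i\ge1}\gamma_i/{\beta}^i=1$; existence and uniqueness of such a ${\beta}$ follow because $g({\beta}):=\sum_{i\ge1}\gamma_i/{\beta}^i$ is continuous and strictly decreasing on $(1,\f)$ with $g({\beta})\to\f$ as ${\beta}\to1^+$ (since $(\gamma_i)$ is infinite, hence not eventually $0$) and $g(N)\le\sum_{i\ge1}(N-1)/N^i=1$, so the solution lies in $(1,N]$. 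One then checks that $(\gamma_i)$ is indeed the quasi-greedy expansion of $1$ for this ${\beta}$: the lexicographic condition on all tails is exactly what guarantees that at each step $n$ the digit $\gamma_n$ is the \emph{largest} admissible choice in the quasi-greedy algorithm, i.e.\ that $\sum_{i=1}^{n}\gamma_i/{\beta}^i<1$ while replacing $\gamma_n$ by $\gamma_n+1$ would violate it; this is the standard translation between the greedy/quasi-greedy dynamics and the lexicographic "self-admissibility" of the expansion of $1$.

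Monotonicity and continuity of ${\beta}\mapsto(\alpha_i({\beta}))$ come almost for free once the bijection is in place. Strict monotonicity: if ${\beta}<{\beta}'$ then $\sum\alpha_i({\beta})/{\beta}^i=1=\sum\alpha_i({\beta}')/({\beta}')^i$, and a term-by-term comparison using that $x\mapsto x^{-i}$ is strictly decreasing forces $(\alpha_i({\beta}))<(\alpha_i({\beta}'))$ lexicographically (otherwise both sums could not equal $1$). For continuity with respect to the metric $d((c_i),(d_i))=2^{-\min\{j:c_j\ne d_j\}}$, I would argue that if ${\beta}_n\to{\beta}$ then, for any fixed truncation length $m$, the finite block $\alpha_1({\beta}_n)\cdots\alpha_m({\beta}_n)$ stabilizes to $\alpha_1({\beta})\cdots\alpha_m({\beta})$ — this is because the quasi-greedy digits up to level $m$ depend only on the sign of finitely many strict inequalities $\sum_{i=1}^{j}c_i/{\beta}^i<1$, each of which is an open condition in ${\beta}$, and at a point where such an inequality degenerates to equality the quasi-greedy (rather than greedy) convention handles the boundary case consistently from one side.

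The main obstacle I anticipate is the careful bookkeeping at \emph{boundary cases} in the converse direction: when $\sum_{i=1}^n\gamma_i/{\beta}^i$ hits $1$ exactly at some finite stage (equivalently, when $(\gamma_i)$ has a periodic tail realizing the sup), one must confirm that the quasi-greedy algorithm — which uses the strict inequality "$<x$" and thus produces infinite expansions — reproduces $(\gamma_i)$ rather than a finite greedy expansion. This is precisely the reason the hypothesis requires $(\gamma_i)$ to be infinite and the tail inequality to be non-strict, and it is where the proof of \cite[Proposition~2.3]{DeVries_Komornik_2008} needs its only genuine modification for the general alphabet $\{0,1,\dots,N-1\}$; the arithmetic with the reflection $\overline{c}=N-1-c$ and the bound $(N-1)/(N-1)=1$ replaces the binary-specific estimates there, but the logical structure of the argument carries over verbatim.
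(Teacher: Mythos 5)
The paper does not actually prove this proposition --- it is quoted as following ``by a slight modification'' of de Vries--Komornik and Baiocchi--Komornik --- so your plan of adapting that argument is exactly what is intended, and the overall architecture (self-admissibility of the tails of $(\alpha_i(\beta))$, inverting via the root of $\sum_i\gamma_i\beta^{-i}=1$, monotonicity read off at the first digit of disagreement) is the right one. But two steps are genuine gaps rather than omitted routine details. The step you call ``the standard translation'' --- that the tail inequalities $\gamma_{k+1}\gamma_{k+2}\cdots\le\gamma_1\gamma_2\cdots$ force $\gamma_n$ to be the \emph{largest} admissible digit at stage $n$ --- is the real content of the converse direction, and it cannot be read off termwise: what is needed is $\sum_{i\ge1}\gamma_{k+i}\beta^{-i}\le 1$ for every $k$, and for $\beta<N$ lexicographic order between sequences does \emph{not} imply the corresponding inequality between values (with $N=3$, $\beta=2$ one has $0220^\infty<10^\infty$ lexicographically while the values are $3/4>1/2$). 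One must run an induction over the first index at which $(\gamma_{k+i})$ disagrees with $(\gamma_i)$, using all the tail inequalities at once. The same caveat applies to your strict-monotonicity argument: the correct comparison is between \emph{finite} partial sums at the first digit of disagreement, exploiting the maximality in the quasi-greedy algorithm, not between the two infinite sums, whose ordering lexicographic order does not control.

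More seriously, your continuity argument fails, and indeed two-sided continuity of $\beta\mapsto(\alpha_i(\beta))$ is false as literally stated. The first $m$ digits are determined not only by the open conditions $\sum_{i\le j}\alpha_i\beta^{-i}<1$ but also by the \emph{closed} maximality conditions $\sum_{i<j}\alpha_i\beta^{-i}+(\alpha_j+1)\beta^{-j}\ge 1$; when such a condition is tight, i.e.\ when $1$ has a finite greedy expansion in base $\beta$, the map jumps from the right. Concretely, for $N=2$ and $\beta_0=G=(1+\sqrt5)/2$ one has $(\alpha_i(G))=(10)^\infty$, while $(\alpha_i(\beta))$ begins with $11$ for every $\beta>G$ (since $1/\beta+1/\beta^2<1$ exactly when $\beta>G$), so $d\big((\alpha_i(\beta)),(\alpha_i(G))\big)=2^{-2}$ for all $\beta>G$. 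Only left-continuity holds, and your own remark that the boundary case is ``handled consistently from one side'' is precisely the observation that defeats a two-sided proof. What the paper actually uses later (to get $\beta_n\rightarrow\beta_U$ from $\alpha(\beta_n)\uparrow\alpha(\beta_U)$) is continuity of the \emph{inverse} map, which does hold and follows from strict monotonicity together with the locally uniform convergence of $\beta\mapsto\sum_i\gamma_i\beta^{-i}$; you should either prove that statement instead, or restrict the continuity claim to continuity from the left.
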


In the sequel we will write $(\alpha_i)$ instead of $(\alpha_i({\beta}))$ for the quasi-greedy ${\beta}$-expansion of $1$ if no confusion arises for ${\beta}$.
  The following proposition for the characterization of greedy expansions can be proved in a similar way as in \cite{Erdos_Joo_Komornik_1990} (see also, \cite[Theorem 3.2]{Baiocchi_Komornik_2007}).
\begin{proposition}\label{prop:4-1}
For $N\ge 2$ and ${\beta}\in(1, N]$, $(b_i(x))=(b_i)$ is the greedy ${\beta}$-expansion of some $x\in[0,(N-1)/({\beta}-1)]$ if and only if
\begin{equation*}
  b_{n+1}b_{n+2}\cdots<\alpha_1\alpha_2\cdots
\end{equation*}
  whenever $b_n<N-1$.
\end{proposition}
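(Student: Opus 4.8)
This is the Parry-type lexicographic characterization of greedy expansions in the overlapping regime ${\beta}\in(1,N]$, and the plan is to follow the classical argument of \cite{Erdos_Joo_Komornik_1990} (see also \cite[Theorem~3.2]{Baiocchi_Komornik_2007}), paying attention to the two points where the clause ``$b_n<N-1$'' is used.

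First I would record the two facts that the greedy algorithm supplies for free. Write $r_n=r_n((b_i)):=\Pi_{\beta}(\sigma^n(b_i))=\sum_{i\ge1}b_{n+i}/{\beta}^{i}$ for the rescaled remainder after $n$ digits; since the greedy requirement (``$b_m$ maximal given $b_1,\dots,b_{m-1}$'') is local and hence preserved by the shift, $\sigma^n(b_i)$ is again a greedy expansion, namely of $r_n$. (i)~If $(b_i)$ is the greedy expansion of $x$ and $b_n<N-1$, then maximality of $b_n$ gives $\sum_{i<n}b_i/{\beta}^i+(b_n+1)/{\beta}^n>x$, i.e.\ $r_n<1$; when $b_n=N-1$ the $n$th step is forced and yields no information, which is precisely why the hypothesis excludes it. (ii)~If $(b_i)$ is an expansion of $x$ and $(c_i)>(b_i)$ is another expansion, then comparing them at their first disagreement $k$ (where necessarily $b_k<N-1$) and cancelling the common prefix gives $r_k\ge1$. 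Since the greedy expansion is the lexicographically largest expansion, (ii) shows that $(b_i)$ is greedy for $\Pi_{\beta}(b_i)$ as soon as $r_k<1$ holds at every $k$ with $b_k<N-1$. Thus the whole proposition reduces to the single equivalence, for $n$ with $b_n<N-1$:
\[
 r_n((b_i))<1\quad\Longleftrightarrow\quad b_{n+1}b_{n+2}\cdots<\alpha_1\alpha_2\cdots .
\]

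To prove this equivalence I would route everything through $(\alpha_i)$, the quasi-greedy expansion of $1$, and Proposition~\ref{prop:1}. The key lemma is $\Pi_{\beta}(\sigma^k(\alpha_i))\le1$ for all $k\ge0$, proved by induction on $k$: if $\alpha_k<N-1$ it follows at once from quasi-greedy maximality (which gives $\sum_{i\le k}\alpha_i/{\beta}^i+{\beta}^{-k}\ge1$) together with $\Pi_{\beta}(\alpha_i)=1$, while if $\alpha_k=N-1$ one combines the induction hypothesis with the identity $\Pi_{\beta}(\sigma^{k-1}(\alpha_i))=(N-1)/{\beta}+\Pi_{\beta}(\sigma^{k}(\alpha_i))/{\beta}$ and ${\beta}\le N$. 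Granting this, the forward implication is short: $\sigma^n(b_i)$ is the greedy expansion of $r_n<1$, and if it were $\ge(\alpha_i)$ then either equality would force $r_n=1$, or at the first disagreement $k$ with $(\sigma^n(b_i))_k>\alpha_k$ the key lemma would force $(\sigma^n(b_i))_k\le\alpha_k$, a contradiction. For the reverse implication one uses that the hypothesis of the proposition is stable under the shift, so $\gamma:=\sigma^n(b_i)$ again satisfies it; assuming $\gamma<(\alpha_i)$ but $\Pi_{\beta}(\gamma)\ge1$, comparison of $\gamma$ with $(\alpha_i)$ at their first disagreement $m$ (where $\gamma_m<\alpha_m\le N-1$) yields $\Pi_{\beta}(\sigma^m\gamma)\ge1$ with $\sigma^m\gamma<(\alpha_i)$ still, and iterating this together with Proposition~\ref{prop:1} produces the contradiction, exactly as in \cite{Erdos_Joo_Komornik_1990}. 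Finally one records that when $b_n=N-1$ no condition is imposed, consistently with the statement (for instance $(N-1)^{\infty}$ is the greedy expansion of $(N-1)/({\beta}-1)$ and satisfies the hypothesis vacuously).

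The step I expect to be the main obstacle is the reverse implication, i.e.\ deducing the analytic estimate $r_n<1$ from the purely combinatorial inequality $\sigma^n(b_i)<(\alpha_i)$: when ${\beta}<N$ the generating IFS overlaps, so lexicographic order on $\{0,\dots,N-1\}^{\infty}$ is not monotone for $\Pi_{\beta}$ and no comparison can be made ``by value'' directly — every step must be channelled through the admissibility property of $(\alpha_i)$ from Proposition~\ref{prop:1} and the inequality $\Pi_{\beta}(\sigma^k(\alpha_i))\le1$. The delicate bookkeeping is the interaction of the first-disagreement analysis with long runs of the digit $N-1$, which is exactly where the clause ``$b_n<N-1$'' is needed and which the classical proofs handle.
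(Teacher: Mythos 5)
Your argument is correct and follows essentially the same route as the paper, which does not reproduce a proof of this proposition but simply defers to the classical argument of Erd\H{o}s--Jo\'o--Komornik (and Baiocchi--Komornik, Theorem~3.2) that you reconstruct: reduction to the remainder condition $r_n<1$, the key lemma $\Pi_{\beta}(\sigma^k(\alpha_i))\le 1$, and the first-disagreement iteration. The only step you leave implicit is why that iteration actually terminates in a contradiction; the standard quantitative point is that each pass forces $r_{n_{j+1}}-1\ge \beta^{m_{j+1}}(r_{n_j}-1)+\Pi_{\beta}(\sigma^{m_{j+1}}(\alpha_i))$, so the excess over $1$ grows geometrically and the tail values would exceed the bound $(N-1)/(\beta-1)$.
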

By  Proposition \ref{prop:4-1} we have an equivalent  characterization for the greedy expansions (see also~\cite{DeVries_Komornik_2008,Baiocchi_Komornik_2007}).

\begin{proposition}\label{prop:4-2}
    For $N\ge 2$ and ${\beta}\in(1, N]$,  $(b_i)=(b_i(x))$ is the greedy ${\beta}$-expansion of some $x\in[0,(N-1)/({\beta}-1)]$ if and only if
 \begin{equation}\label{eq: RUnique-1}
  b_{n+k+1}b_{n+k+2}\cdots<\alpha_1\alpha_2\cdots
  \end{equation}
  for all $k\ge 0$ whenever $b_n<N-1$.
\end{proposition}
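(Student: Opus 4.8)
The plan is to derive Proposition \ref{prop:4-2} from Proposition \ref{prop:4-1} together with the self-admissibility of the quasi-greedy expansion $(\alpha_i)$ recorded in Proposition \ref{prop:1}. The ``if'' direction is immediate: if $(b_i)$ satisfies (\ref{eq: RUnique-1}) for all $k\ge 0$ whenever $b_n<N-1$, then in particular taking $k=0$ gives $b_{n+1}b_{n+2}\cdots<\alpha_1\alpha_2\cdots$ whenever $b_n<N-1$, so Proposition \ref{prop:4-1} yields that $(b_i)$ is the greedy ${\beta}$-expansion of some $x\in[0,(N-1)/({\beta}-1)]$. Thus the whole content lies in the ``only if'' direction, and there the only new point beyond Proposition \ref{prop:4-1} is to handle the blocks of maximal digits $\overline 0$ that may occur between position $n$ and position $n+k$.

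For the ``only if'' direction I would fix $n$ with $b_n<N-1$ and $k\ge 0$. Since $n\in\{n,\dots,n+k\}$ and $b_n<N-1$, I may let $m$ be the \emph{largest} index in $\{n,\dots,n+k\}$ with $b_m<N-1$; then $b_{m+1}=\cdots=b_{n+k}=\overline 0$ (vacuously if $m=n+k$), so with $j:=n+k-m\ge 0$ we have $b_{m+1}b_{m+2}\cdots=\overline 0^{\,j}\,b_{n+k+1}b_{n+k+2}\cdots$. Applying Proposition \ref{prop:4-1} at the index $m$ (where $b_m<N-1$) gives $\overline 0^{\,j}\,b_{n+k+1}b_{n+k+2}\cdots<\alpha_1\alpha_2\cdots$. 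I would then observe that necessarily $\alpha_1\cdots\alpha_j=\overline 0^{\,j}$: otherwise, letting $l\le j$ be the first position with $\alpha_l<N-1$, the strings $\overline 0^{\,j}b_{n+k+1}\cdots$ and $\alpha_1\alpha_2\cdots$ would agree on their first $l-1$ digits and differ at position $l$ with the former carrying the larger digit $\overline 0$, forcing $\overline 0^{\,j}b_{n+k+1}\cdots>\alpha_1\alpha_2\cdots$, a contradiction. Cancelling the common prefix $\overline 0^{\,j}$ then gives $b_{n+k+1}b_{n+k+2}\cdots<\alpha_{j+1}\alpha_{j+2}\cdots$, and the admissibility inequality $\alpha_{j+1}\alpha_{j+2}\cdots\le\alpha_1\alpha_2\cdots$ (Proposition \ref{prop:1}) finishes the proof of (\ref{eq: RUnique-1}).

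The only real obstacle is this lexicographic bookkeeping across the run of maximal digits: one must recognize that a greedy tail beginning at a position carrying a non-maximal digit cannot be lexicographically too large, which is exactly what forces $\overline 0^{\,j}$ to be an initial block of $(\alpha_i)$ and thereby reduces the estimate to a shifted copy of $(\alpha_i)$, on which self-admissibility applies. Everything else is routine; in the degenerate case $k=0$ (equivalently $m=n+k$, $j=0$) the statement reduces verbatim to Proposition \ref{prop:4-1} and needs nothing new.
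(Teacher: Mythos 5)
Your proof is correct and rests on exactly the same two ingredients as the paper's: Proposition \ref{prop:4-1} for the $k=0$ case, and the self-admissibility of $(\alpha_i)$ from Proposition \ref{prop:1} to absorb the run of maximal digits. The only difference is organizational — the paper inducts on $k$, shifting one position at a time and deducing $\alpha_1=N-1$ whenever it meets a maximal digit, whereas you collapse the entire maximal run at once by locating its last non-maximal predecessor $m$ and forcing $\alpha_1\cdots\alpha_j=\overline{0}^{\,j}$; this is the same argument unrolled versus rolled up.
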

\begin{proof}
  The sufficiency  follows directly by taking $k=0$ in Equation (\ref{eq: RUnique-1}) and then using Proposition \ref{prop:4-1}. For the necessity, suppose $(b_i)$ is the greedy expansion of some $x$, and suppose $b_n<N-1$ for some $n\ge 1$. By Proposition \ref{prop:4-1} we have
\begin{equation}\label{eq: prop4-2}
  b_{n+1}b_{n+2}\cdots<\alpha_1\alpha_2\cdots.
\end{equation}
We claim that
 $b_{n+2}b_{n+3}\cdots<\alpha_1\alpha_2\cdots.$

  If $b_{n+1}<N-1$, Proposition \ref{prop:4-1} yields the claim.
If $b_{n+1}=N-1$, Equation (\ref{eq: prop4-2}) implies that $\alpha_1=N-1$ and therefore
\begin{equation*}
b_{n+2}b_{n+3}\cdots<\alpha_2\alpha_3\cdots\le\alpha_1\alpha_2\cdots,
\end{equation*}
where the second inequality follows from Proposition \ref{prop:1}.

By induction, we have
$b_{n+k+1}b_{n+k+2}\cdots<\alpha_1\alpha_2\cdots$ for all $k\ge 0$.
\end{proof}
Note that an expansion $(d_i)=(d_i(x))$ is the unique expansion of  $x\in\U{N}$ if and only if both $(d_i)$ and $(\overline{d_i})$ are the greedy expansions (cf.~\cite{Erdos_Joo_Komornik_1990}). By using Proposition \ref{prop:4-2} we have the following characterization  of  $\U{N}$.
\begin{theorem}\label{th: 2'}
  For ${\beta}\in(1, N]$, let $(\alpha_i)=(\de_i({\beta}))$ be the quasi-greedy ${\beta}$-expansion of $1$. Then $x\in\U{N}$ if and only if the ${\beta}$-expansion $(d_i)=(d_i(x))$ of $x$ satisfies
  \begin{equation*}
  \left\{
  \begin{array}{l}
    d_{m+k+1}d_{m+k+2}\cdots<\alpha_1\alpha_2\cdots,\\
      \overline{d_{n+k+1}d_{n+k+2}\cdots}<\alpha_1\alpha_2\cdots,
  \end{array}
  \right.
  \end{equation*}
  for all $k\ge 0$, where $m$ is the least integer such that $d_m<N-1$ and $n$ is the least integer such that $d_n>0$.
\end{theorem}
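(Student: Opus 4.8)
The plan is to combine the remark preceding the statement — that $(d_i)$ is the \emph{unique} $\beta$-expansion of $x\in\U{N}$ if and only if \emph{both} $(d_i)$ and $(\overline{d_i})$ are greedy $\beta$-expansions — with the lexicographic characterization of greedy expansions from Proposition \ref{prop:4-2}. First I would unpack the condition ``$(d_i)$ is greedy'' using Proposition \ref{prop:4-2}: this says $d_{n+k+1}d_{n+k+2}\cdots<\alpha_1\alpha_2\cdots$ for all $k\ge 0$ whenever $d_n<N-1$. The subtlety is that in the theorem's statement the condition is only imposed ``from $m$ onwards'', where $m$ is the \emph{least} index with $d_m<N-1$, whereas Proposition \ref{prop:4-2} quantifies over \emph{every} $n$ with $d_n<N-1$. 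So the first real step is to show these two formulations are equivalent for $(d_i)$.

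The key observation for that reduction is monotonicity: if $d_{m+k+1}d_{m+k+2}\cdots<\alpha_1\alpha_2\cdots$ holds for all $k\ge 0$, then for any later index $n\ge m$ the tail $d_{n+1}d_{n+2}\cdots$ is itself a tail of some $d_{m+k+1}d_{m+k+2}\cdots$, hence also strictly below $\alpha_1\alpha_2\cdots$; so the ``least-index'' version implies the ``all-indices'' version, and the converse is trivial. (One should be slightly careful when $d_i=N-1$ for all $i$, but then $x=(N-1)/(\beta-1)$, which has the unique expansion $(N-1)^\f$ only when $\beta>N$ — excluded here — so for $\beta\le N$ such a sequence is never the unique expansion and the least index $m$ always exists; similarly $n$ exists unless $x=0$, and $0$ always has the unique expansion $0^\f$, trivially satisfying the conditions.) Applying the same argument to $(\overline{d_i})$: the least index $n'$ with $\overline{d_{n'}}<N-1$ is exactly the least index with $d_{n'}>0$, i.e.\ $n'=n$ in the theorem's notation, and the greedy condition for $(\overline{d_i})$ becomes $\overline{d_{n+k+1}d_{n+k+2}\cdots}<\alpha_1\alpha_2\cdots$ for all $k\ge 0$. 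Combining, $x\in\U{N}$ iff both displayed inequalities hold for all $k\ge 0$, which is the claim.

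I expect the main obstacle to be the bookkeeping around the two reductions — matching the quantifier ``for all $n$ with $d_n<N-1$'' in Proposition \ref{prop:4-2} against the single least index $m$ in the statement, and simultaneously translating the greedy condition on the reflected sequence $(\overline{d_i})$ (whose admissible digits below $N-1$ correspond to digits of $(d_i)$ strictly above $0$) into the stated form. Everything else is a direct substitution into Propositions \ref{prop:4-1}--\ref{prop:4-2} together with the cited fact that uniqueness is equivalent to $(d_i)$ and $(\overline{d_i})$ both being greedy; no new estimates are needed.
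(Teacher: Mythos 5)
Your proposal follows essentially the same route as the paper's (one-line) proof of Theorem \ref{th: 2'}: the paper simply invokes the cited equivalence that $(d_i)$ is the unique expansion if and only if both $(d_i)$ and $(\overline{d_i})$ are greedy, and then applies Proposition \ref{prop:4-2}; your reduction of the quantifier ``every $n$ with $d_n<N-1$'' to the least such index $m$ (and the corresponding translation for the reflected sequence) is exactly the bookkeeping this requires, and it is carried out correctly. One small slip in your parenthetical: for $\beta\le N$ the sequence $(N-1)^\f$ \emph{is} the unique $\beta$-expansion of the right endpoint $(N-1)/(\beta-1)$ (any digit string with some $d_j<N-1$ sums to a strictly smaller value), so that point does lie in $\U{N}$ and no index $m$ exists — but then the first displayed condition is vacuous and the characterization still holds, so this does not affect your main argument.
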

In terms of  Theorem \ref{th: 2'} we can simplify  the calculation of  the Hausdorff dimension of  $\U{N}$ as described in the following theorem.

\begin{theorem}\label{th: 3}
  For $N\ge 2$ and ${\beta}\in(1, N]$, let $(\alpha_i)=(\alpha_i({\beta}))$. Then we have
 \[
  \dim_H\U{N}=\dim_H\V{N},
\]
  where
\begin{equation*}
  \V{N}:=\Big\{\sum_{i=1}^\f\frac{d_i}{{\beta}^i}: ~\overline{\alpha_1\alpha_2\cdots}<d_n d_{n+1}\cdots<\alpha_1\alpha_2\cdots, ~n\ge 1\Big\}.
\end{equation*}
\end{theorem}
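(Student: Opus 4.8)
The plan is to show that $\U{N}$ and $\V{N}$ differ only by a ``small'' set, namely a countable union of scaled copies of lower-dimensional pieces, so that the Hausdorff dimensions coincide. First I would observe that, by Theorem \ref{th: 2'}, a point $x=\Pi_{\beta}((d_i))\in\U{N}$ is characterized by the tail conditions $d_{m+k+1}d_{m+k+2}\cdots<\alpha_1\alpha_2\cdots$ and $\overline{d_{n+k+1}d_{n+k+2}\cdots}<\alpha_1\alpha_2\cdots$ for all $k\ge 0$, where $m$ (resp.\ $n$) is the first index with $d_m<N-1$ (resp.\ $d_n>0$). For $x\in\V{N}$ the full (two-sided) inequality $\overline{\alpha_1\alpha_2\cdots}<d_jd_{j+1}\cdots<\alpha_1\alpha_2\cdots$ holds for \emph{every} $j\ge 1$, so $\V{N}\subseteq\U{N}$ is immediate once one checks that the strict lower bound $\overline{\alpha_1\cdots}<d_jd_{j+1}\cdots$ for all $j$ is equivalent to $\overline{d_jd_{j+1}\cdots}<\alpha_1\cdots$ for all $j$ (reflect and reverse the inequality, using $\overline{\overline c}=c$). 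Hence $\dim_H\V{N}\le\dim_H\U{N}$.

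For the reverse inequality, the idea is that any $x\in\U{N}$ with expansion $(d_i)$ which is \emph{not} already in $\V{N}$ must violate one of the strict inequalities $\overline{\alpha_1\alpha_2\cdots}<d_jd_{j+1}\cdots<\alpha_1\alpha_2\cdots$ at some finite level $j$; by Theorem \ref{th: 2'} this can only happen for the finitely many ``initial'' indices $j\le\max\{m,n\}$ before the digit string has both a non-$(N-1)$ digit and a non-zero digit behind it. More precisely, I would argue that $\U{N}\setminus\V{N}$ is contained in a countable union, over all finite admissible prefixes $d_1\cdots d_\ell$, of sets of the form $f_{d_1}\circ\cdots\circ f_{d_\ell}(\V{N}')$, where $\V{N}'$ is a set of the same type as $\V{N}$ (or a translate/reflection of it). Since each $f_{d_i}$ is a similarity with ratio $1/{\beta}$, each such piece has the same Hausdorff dimension as $\V{N}$, and a countable union does not increase Hausdorff dimension; therefore $\dim_H(\U{N}\setminus\V{N})\le\dim_H\V{N}$, which combined with $\dim_H\V{N}\le\dim_H\U{N}=\dim_H\big((\U{N}\setminus\V{N})\cup\V{N}\big)$ gives equality.

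The technical heart — and the step I expect to be the main obstacle — is making the decomposition of $\U{N}\setminus\V{N}$ precise: one must show that after stripping off a suitable finite prefix from the expansion $(d_i)$ of any $x\in\U{N}$, the shifted tail satisfies the two-sided strict inequality $\overline{\alpha_1\alpha_2\cdots}<\sigma^\ell((d_i))<\alpha_1\alpha_2\cdots$ coordinatewise, i.e.\ lands in $\V{N}$ (up to the projection bookkeeping coming from $f_{d_1}\circ\cdots\circ f_{d_\ell}$). This requires a careful case analysis using Proposition \ref{prop:1} (the admissibility $\alpha_{k+1}\alpha_{k+2}\cdots\le\alpha_1\alpha_2\cdots$ of the quasi-greedy expansion) to handle the borderline situations where a tail of $(d_i)$ equals a shift of $(\alpha_i)$ — exactly the argument pattern already used in the proof of Proposition \ref{prop:4-2}. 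Once that lemma is in place, the dimension bookkeeping is routine: Hausdorff dimension is bi-Lipschitz invariant, invariant under the similarities $f_d$, stable under countable unions, and $\Pi_{\beta}$ restricted to the relevant symbolic sets is bi-Lipschitz onto its image, so the symbolic and geometric dimensions agree.
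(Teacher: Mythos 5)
Your proposal is correct and follows essentially the same route as the paper: the inclusion $\V{N}\subseteq\U{N}$ via Theorem \ref{th: 2'} gives one inequality, and covering $\U{N}$ by countably many similar copies of $\V{N}$ (obtained by stripping the finite prefix up to the first digit in $\{1,\dots,N-2\}$, respectively up to the end of the initial run of $0$'s or $(N-1)$'s) gives the other. The paper's proof carries out exactly the three-case decomposition you sketch, so the ``technical heart'' you flag is in fact an immediate consequence of Theorem \ref{th: 2'} rather than an obstacle.
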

\begin{proof}
Clearly, by Theorem \ref{th: 2'} we have $\V{N}\subseteq\U{N}$.
In terms of the properties of Hausdorff dimension it suffices to show that
\begin{equation}\label{eq: RUnique-2}
\begin{split}
  \U{N}&\subseteq\bigcup_{d=1}^{N-2}\frac{d+\V{N}}{{\beta}}\cup\bigcup_{n=1}^\f\bigcup_{d=1}^{N-1}\frac{d+\V{N}}{{\beta}^{n+1}}\\
  &\qquad\qquad\cup\bigcup_{m=1}^\f\bigcup_{d=0}^{N-2}\Big(\sum_{\ell=1}^{m}\frac{N-1}{{\beta}^\ell}+\frac{d+\V{N}}{{\beta}^{m+1}}\Big).
  \end{split}
\end{equation}
Let $x\in\U{N}$ and  $(d_i)=(d_i(x))$ be its unique ${\beta}$-expansion. We will finish the proof by showing in the following three cases that $x$ is also in the right-hand side of  (\ref{eq: RUnique-2}).

Case I. $0<d_1<N-1$. Then by Theorem \ref{th: 2'} it follows that
\[
\overline{\alpha_1\alpha_2\cdots}<d_{k+1}d_{k+2}\cdots<\de_1\de_2\cdots
\]
for all $k\ge 1$, i.e., $d_2 d_3\cdots\in\Pi_{\beta}^{-1}(\V{N})$ where $\Pi_{\beta}$ is the projection map defined in (\ref{eq: Pi}).

Case II. $d_1=0$. Then by Theorem \ref{th: 2'} it yields that
\[
d_{k+1}d_{k+2}\cdots<\de_1\de_2\cdots
\]
for all $k\ge 1$. Let $n$ be the least integer such that $d_n>0$. Again by Theorem \ref{th: 2'} it follows that $d_{n+1}d_{n+2}\cdots\in\Pi_{\beta}^{-1}(\V{N})$.

Case III. $d_1=N-1$. Then in a similar way as in Case II we have $d_{m+1}d_{m+2}\cdots\in\Pi_{\beta}^{-1}(\V{N})$, where $m$ is the least integer such that  $d_m<N-1$.
\end{proof}
Clearly, $\Pi_{\beta}^{-1}(\V{N})$ is a symmetric subshift of $\{0,1,\cdots,N-1\}^\f$. According to Theorem \ref{th: 3} it suffices to prove  Theorem \ref{th: main results} for $\V{N}$ instead of $\U{N}$.

\section{Proof of Theorem \ref{th: main results-0}}\label{sec: proof of th1}
Suppose that $t_1\cdots t_p\in\{0,1,\cdots, N-1\}^p$ is an admissible block. Then by Definition \ref{def:admissible block} it follows
\begin{equation}\label{eq: admissible block}
\overline{t_1\cdots t_p}\le t_i\cdots t_pt_1\cdots t_{i-1}<t_i\cdots t_p^+\,\overline{t_1\cdots t_{i-1}}\le{t_1\cdots t_p^+}
\end{equation}
for any $1\le i\le p$. The following proposition guarantees that $(t_1\cdots t_p)^\f$ is a quasi-greedy expansion of $1$ for some base ${{\beta}_L}\in(1,N]$.
\begin{proposition}\label{prop:DK-0}
  Let $t_1\cdots t_p\in\{0,1,\cdots,N-1\}^p$ be an admissible block. Then $(\alpha_i({{\beta}_L}))=(t_1\cdots t_p)^\f$  for some base ${{\beta}_L}\in(1,N]$.
\end{proposition}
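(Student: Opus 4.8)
The plan is to exhibit $(t_1\cdots t_p)^\f$ as the quasi-greedy expansion of $1$ for a suitable base by invoking the lexicographic parametrization in Proposition~\ref{prop:1}. That proposition identifies $\beta\mapsto(\alpha_i(\beta))$ as a strictly increasing bijection from $(1,N]$ onto the set of all infinite sequences $(\gamma_i)\in\{0,1,\dots,N-1\}^\f$ with $\gamma_{k+1}\gamma_{k+2}\cdots\le\gamma_1\gamma_2\cdots$ for every $k\ge 0$. So it suffices to verify that the $p$-periodic sequence $\gamma:=(t_1\cdots t_p)^\f$ (i) has infinitely many nonzero digits, and (ii) is lexicographically at least as large as each of its shifts; the base $\beta_L\in(1,N]$ is then the unique $\gamma$-preimage under this bijection.

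For (i) I would argue by contradiction: if $t_1=\dots=t_p=0$, then $\overline{t_1\cdots t_p}$ is the block $(N-1)\cdots(N-1)$ of length $p$, which, since $N\ge 2$, contradicts the inequality $\overline{t_1\cdots t_p}\le t_1\cdots t_p$ (the case $i=1$ of Definition~\ref{def:admissible block}); hence some $t_j>0$, and then $t_j$ occurs infinitely often in $\gamma$, so $\gamma$ is infinite. For (ii), since $\gamma$ has period $p$, every shift of $\gamma$ equals one of the $p$ periodic sequences $(t_{i+1}\cdots t_p\,t_1\cdots t_i)^\f$ with $0\le i\le p-1$, obtained by cyclically permuting the generating block. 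Moreover, for length-$p$ blocks $A$ and $B$ with $A\le B$ one has $A^\f\le B^\f$, because equality is trivial and a strict inequality $A<B$ already manifests itself within the first $p$ coordinates of $A^\f$ and $B^\f$. Thus (ii) reduces to the block inequalities $t_{i+1}\cdots t_p\,t_1\cdots t_i\le t_1\cdots t_p$ for $0\le i\le p-1$, and these are read off directly from the displayed chain~(\ref{eq: admissible block}) taken with its running index equal to $i+1$: it yields $t_{i+1}\cdots t_p\,t_1\cdots t_i<t_{i+1}\cdots t_p^+\,\overline{t_1\cdots t_i}\le t_1\cdots t_p^+$, and a length-$p$ block lying strictly below $t_1\cdots t_p^+$ is at most $t_1\cdots t_p$ (the hypothesis $t_p<N-1$ in Definition~\ref{def:admissible block} is what makes $t_1\cdots t_p^+$ meaningful here). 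Combining (i) and (ii), Proposition~\ref{prop:1} delivers the required base $\beta_L\in(1,N]$ with $(\alpha_i(\beta_L))=(t_1\cdots t_p)^\f$.

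I do not expect a genuine obstacle: the proposition is essentially a dictionary entry translating the combinatorial definition of an admissible block into the known lexicographic description of quasi-greedy expansions of $1$. The only points asking for a little care are the reduction of an arbitrary shift of a $p$-periodic sequence to one of its $p$ cyclic permutations, and the two elementary lexicographic facts invoked above (namely $A<t_1\cdots t_p^+\Rightarrow A\le t_1\cdots t_p$ for length-$p$ blocks, and $A\le B\Rightarrow A^\f\le B^\f$), which I would state explicitly rather than leave to the reader.
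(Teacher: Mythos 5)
Your proposal is correct and follows essentially the same route as the paper: both extract the cyclic-shift inequality $t_i\cdots t_p t_1\cdots t_{i-1}\le t_1\cdots t_p$ from the chain (\ref{eq: admissible block}) (via strictness below $t_1\cdots t_p^+$), pass to the periodic sequences, and invoke Proposition~\ref{prop:1}. The only difference is that you spell out the auxiliary lexicographic facts and the check that $(t_1\cdots t_p)^\f$ is an infinite sequence, which the paper leaves implicit.
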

\begin{proof}
 Since $t_1\cdots t_p\in\{0,1,\cdots,N-1\}^p$ is an admissible block, by (\ref{eq: admissible block}) it follows that
\[
  t_i\cdots t_pt_1\cdots t_{i-1}\le t_1\cdots t_p
\]
  for any $1\le i\le p$. This yields
\[
  t_i\cdots t_p(t_1\cdots t_p)^\f=(t_i\cdots t_p t_1\cdots t_{i-1})^\f\le(t_1\cdots t_p)^\f.
\]
  Then by Proposition \ref{prop:1} we have $(\alpha_i({{\beta}_L}))=(t_1\cdots t_p)^\f$ for some ${{\beta}_L}\in(1,N]$.
\end{proof}
Note that $\beta>1$ is   a purely Parry number if $(\al_i(\beta))$ is periodic.
So, the base ${{\beta}_L}$ defined in Proposition \ref{prop:DK-0}  is  a purely Parry number. Later in Proposition \ref{prop:DK-2} we will show that ${\beta}_L\ge G_N$.
Recall from Definition \ref{def:generalized Thue-Morse sequence} that $(\theta_i)=(\theta_i(t_1\cdots t_p^+))$ is a generalized Thue-Morse sequence. We will show in Proposition \ref{prop:DK-1} that if $t_1\cdots t_p$ is admissible, then $(\th_i)$ is also a quasi-greedy expansion of $1$ for some base ${{\beta}_U}$. First we give the following lemma.

\begin{lemma}\label{lem: keylem}
 Let $t_1\cdots t_p$ be an admissible block and  let $(\theta_i)=(\theta_i(t_1\cdots t_p^+))$ be the generalized Thue-Morse sequence generated by $t_1\cdots t_p^+$. Then
 for any $n\ge 0$ we have
\begin{equation}\label{eq: proof-00}
 \overline{\theta_1\cdots\th_{2^n p-i+1}}<   \th_i\cdots\th_{2^np}\le \theta_1\cdots\th_{2^n p-i+1}
\end{equation}
 for any $1\le i\le 2^n p$.
 \end{lemma}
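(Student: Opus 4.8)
The plan is to prove Lemma \ref{lem: keylem} by induction on $n$, exploiting the self-similar recursive structure of the generalized Thue-Morse sequence from Definition \ref{def:generalized Thue-Morse sequence}, namely the relation $\theta_{2^m p+1}\cdots\theta_{2^{m+1}p}=\overline{\theta_1\cdots\theta_{2^m p}}\,^+$. For the base case $n=0$ the block $\theta_1\cdots\theta_p$ equals $t_1\cdots t_p^+$, and the desired chain of inequalities for $1\le i\le p$ is precisely a reformulation of the admissibility inequalities (\ref{eq: admissible block}): the left inequality $\overline{\theta_1\cdots\theta_{p-i+1}}<\theta_i\cdots\theta_p$ comes from $t_i\cdots t_p^+\,\overline{t_1\cdots t_{i-1}}\le t_1\cdots t_p^+$ after comparing prefixes of matching length, and the right inequality $\theta_i\cdots\theta_p\le\theta_1\cdots\theta_{p-i+1}$ comes from $\overline{t_1\cdots t_p}\le t_i\cdots t_p t_1\cdots t_{i-1}$ again by restricting to the first $p-i+1$ coordinates. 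I would verify carefully that the strictness in the middle inequality of (\ref{eq: admissible block}) transfers correctly; this requires a short argument about whether the compared prefixes can be equal.

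For the inductive step, assume (\ref{eq: proof-00}) holds for some $n\ge 0$, and consider an index $i$ with $1\le i\le 2^{n+1}p$. Write the block $\theta_1\cdots\theta_{2^{n+1}p}$ as $A\,\overline{A}\,^+$ where $A=\theta_1\cdots\theta_{2^n p}$. There are two cases depending on whether $i\le 2^n p$ or $i>2^n p$. When $i\le 2^n p$, the tail $\theta_i\cdots\theta_{2^{n+1}p}$ splits as $(\theta_i\cdots\theta_{2^n p})(\overline{A}\,^+)$; using the inductive hypothesis on the first part together with the observation that $\theta_1\cdots\theta_{2^{n+1}p-i+1}$ begins with $\theta_i\cdots\theta_{2^n p}$'s "partner prefix" and then continues into $\overline{A}$-territory, one chases the lexicographic comparison down to a comparison governed by (\ref{eq: proof-00}) at level $n$ and by the reflected inequalities obtained by applying $c\mapsto\overline c$ (which reverses lexicographic order). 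When $i>2^n p$, write $i=2^n p+j$ with $1\le j\le 2^n p$; then $\theta_i\cdots\theta_{2^{n+1}p}=\overline{\theta_j\cdots\theta_{2^n p}}\,^+$ (being the appropriate suffix of $\overline{A}\,^+$), and one must compare this with the prefix $\theta_1\cdots\theta_{2^{n+1}p-i+1}=\theta_1\cdots\theta_{2^n p-j+1}$. Here the inductive hypothesis (\ref{eq: proof-00}) at level $n$, after reflecting and accounting for the trailing $+$, yields exactly what is needed. I expect one will need the auxiliary fact that $\theta_1\cdots\theta_{2^n p}$ itself starts with $\theta_1\cdots\theta_{2^{n-1}p}=\ldots$ i.e. a "prefix-closed" or consistency property of the $\theta$-construction, which should follow by a parallel induction or be extracted as a preliminary sub-lemma.

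The main obstacle will be the bookkeeping in the case $i\le 2^n p$: the block $\theta_i\cdots\theta_{2^{n+1}p}$ has length exceeding $2^n p$, so the inductive hypothesis alone does not immediately apply, and one must correctly identify how the prefix $\theta_1\cdots\theta_{2^{n+1}p-i+1}$ decomposes relative to the $A\,\overline A\,^+$ splitting — in particular handling the boundary where the prefix crosses from the first half into the second half. The delicate point is that lexicographic inequalities can degrade from strict to non-strict (or vice versa) when one passes to reflections and appends or deletes a terminal $\pm 1$, so each inequality in (\ref{eq: proof-00}) must be tracked with its exact strictness. A clean way to organize this is to first record, as a separate observation, the "reflected/shifted" form of (\ref{eq: proof-00}) — that $\theta_i\cdots\theta_{2^n p}{}^+\,\overline{\theta_1\cdots\theta_{i-1}}\le\theta_1\cdots\theta_{2^n p}{}^+$ for all relevant $i$ — mirroring the two-sided shape of Definition \ref{def:admissible block}, and then prove both halves simultaneously by induction so that each step has the strengthened hypothesis it needs. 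Once the base case and this strengthened two-sided induction are set up, the inductive step is a finite case analysis with no real analytic content.
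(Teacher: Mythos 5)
Your plan is correct and follows essentially the same route as the paper: induction on $n$, with the base case read off from the admissibility inequalities (\ref{eq: admissible block}) and the inductive step split into the cases $i\le 2^n p$ (where the prefix comparison is pushed across the boundary using (\ref{eq: proof-00}) at level $n$ applied at the shifted index $2^n p-i+2$) and $i=2^n p+j>2^n p$ (where one reflects the level-$n$ inequalities and tracks how the terminal $+$ converts strict to non-strict and vice versa). The "prefix-consistency" sub-lemma you anticipate is automatic from Definition \ref{def:generalized Thue-Morse sequence}, and no separate strengthened two-sided induction is needed — the two-sided statement (\ref{eq: proof-00}) itself already supplies both reflected forms.
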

 \begin{proof}
   We will prove (\ref{eq: proof-00}) by using induction on $n$.
Since $t_1\cdots t_p$ is  an admissible block, it follows from Equation (\ref{eq: admissible block}) that
\[
\overline{\th_1\cdots \th_{p-i+1}}\le t_i\cdots t_p<t_i\cdots t_p^+=\theta_i\cdots\th_p\le \th_1\cdots\th_{p-i+1}
\]
for any $1\le i\le p$.
Then (\ref{eq: proof-00}) holds for $n=0$.

Suppose (\ref{eq: proof-00}) holds for $n=k$. We will split the proof of (\ref{eq: proof-00}) for $n=k+1$ into the following two cases.

Case I. $1\le i\le 2^k p$. Then by induction we have $\th_i\cdots\th_{2^kp}>\overline{\th_1\cdots\th_{2^kp-i+1}}$, which yields
\[
\th_i\cdots\th_{2^{k+1} p}>\overline{\th_1\cdots\th_{2^{k+1} p-i+1}}.
\]
Again by induction we have $\th_i\cdots\th_{2^{k} p}\le\th_1\cdots\th_{2^k p-i+1}$, and for any $2\le i\le 2^kp$,
\[
\th_{2^k p+1}\cdots\th_{2^{k} p+i-1}=\overline{\th_1\cdots\th_{i-1}}<\th_{2^k p-i+2}\cdots\th_{2^{k} p},
\]
where  the inequality holds by the induction.  Then
\[
\overline{\th_1\cdots\th_{2^{k+1} p-i+1}}<\th_i\cdots\th_{2^{k+1}p}\le \th_1\cdots\th_{2^{k+1}p-i+1}
\]
for any $1\le i\le 2^k p$.

Case II. $2^k p<i\le 2^{k+1}p$.
Then we can write $i=2^k p+j$ with $1\le j\le 2^k p$. By induction and Definition \ref{def:generalized Thue-Morse sequence} of the generalized Thue-Morse sequence $(\th_i)$ it follows that
$$
\overline{\theta_1\cdots\th_{2^{k+1}p-i+1}}<\overline{\th_j\cdots\th_{2^k p}}\,^+=\th_{i}\cdots\th_{2^{k+1}p}\le\th_1\cdots\th_{2^{k+1}p-i+1}
$$ for any $2^k p<i=2^kp+j\le 2^{k+1}p$.
 \end{proof}

\begin{proposition}\label{prop:DK-1}
 The block $t_1\cdots t_p\in\{0,1,\cdots,N-1\}^p$ is  admissible if and only if  $(\alpha_i({\beta}_U))=(\theta_i(t_1\cdots t_p^+))$  for some base ${{\beta}_U}\in(1,N]$.
\end{proposition}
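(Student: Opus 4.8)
The plan is to reduce both implications to the characterization in Proposition \ref{prop:1}: a word $(\ga_i)\in\{0,1,\dots,N-1\}^\f$ equals $(\alpha_i({\beta}))$ for some ${\beta}\in(1,N]$ exactly when it is infinite (has infinitely many nonzero digits) and $\ga_{k+1}\ga_{k+2}\cdots\le\ga_1\ga_2\cdots$ for every $k\ge 0$. The infiniteness of $(\th_i)=(\th_i(t_1\cdots t_p^+))$ comes for free: the block $\overline{\th_1\cdots\th_{2^mp}}\,^+$ adjoined at stage $m$ of Definition \ref{def:generalized Thue-Morse sequence} ends in the digit $\overline{\th_{2^mp}}+1\ge 1$, and an easy induction ($\th_p=t_p+1\ge 1$, and $\th_{2^{m+1}p}=\overline{\th_{2^mp}}+1$) shows $\th_{2^mp}\ge 1$ for all $m$, so $(\th_i)$ is a genuine word over $\{0,\dots,N-1\}$ with nonzero digits arbitrarily far out. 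Hence the statement amounts to: $t_1\cdots t_p$ is admissible $\iff$ $\th_{k+1}\th_{k+2}\cdots\le\th_1\th_2\cdots$ for all $k\ge 0$.

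\emph{Forward implication.} Assume $t_1\cdots t_p$ is admissible. Lemma \ref{lem: keylem} gives, for each $n\ge 0$ and each $1\le i\le 2^np$, that $\th_i\cdots\th_{2^np}\le\th_1\cdots\th_{2^np-i+1}$. Fix $k\ge 0$; for every $n$ with $2^np>k$, specialising to $i=k+1$ gives the finite inequality $\th_{k+1}\cdots\th_{2^np}\le\th_1\cdots\th_{2^np-k}$. If $\th_{k+1}\th_{k+2}\cdots>\th_1\th_2\cdots$ held, the first index $j$ at which the two sequences differ would be finite, and choosing $n$ with $2^np-k\ge j$ would make the displayed inequality fail; contradiction. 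So $\th_{k+1}\th_{k+2}\cdots\le\th_1\th_2\cdots$ for all $k$, and Proposition \ref{prop:1} supplies ${\beta}_U\in(1,N]$ with $(\alpha_i({\beta}_U))=(\th_i)$.

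\emph{Converse.} Assume $(\alpha_i({\beta}_U))=(\th_i)$, so $\th_{k+1}\cdots\th_{k+p}\le\th_1\cdots\th_p$ for all $k\ge 0$. From Definition \ref{def:generalized Thue-Morse sequence} I first record the dictionary $\th_1\cdots\th_p=t_1\cdots t_p^+$ and (using the recursion at $m=0$ and $m=1$) $\th_{p+j}=\th_{2p+j}=\overline{t_j}$ for $1\le j\le p-1$, together with $\th_{2p}=\overline{t_p}$; in particular $\th_{p+1}\cdots\th_{2p}=\overline{t_1\cdots t_p}$. These identities turn the two requirements of Definition \ref{def:admissible block} into statements about length-$p$ windows of $(\th_i)$. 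The second requirement $t_i\cdots t_p^+\,\overline{t_1\cdots t_{i-1}}\le t_1\cdots t_p^+$ becomes precisely $\th_i\cdots\th_{p+i-1}\le\th_1\cdots\th_p$, which is the length-$p$ prefix of $\th_i\th_{i+1}\cdots\le\th_1\th_2\cdots$ and hence holds. For the first requirement $\overline{t_1\cdots t_p}\le t_i\cdots t_p\,t_1\cdots t_{i-1}$, reflecting both sides gives the equivalent $\overline{t_i\cdots t_p}\,\overline{t_1\cdots t_{i-1}}\le t_1\cdots t_p$, whose left side the dictionary rewrites as $\th_{p+i}\cdots\th_{2p+i-1}$ and whose right side as $\th_1\cdots\th_p^-$; so the first requirement is equivalent to the \emph{strict} bound $\th_{p+i}\cdots\th_{2p+i-1}<\th_1\cdots\th_p$ for $1\le i\le p$.

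This last point is the main obstacle: shift-domination only yields $\th_{p+i}\cdots\th_{2p+i-1}\le\th_1\cdots\th_p$, so one must separately exclude the equality case, i.e. the reappearance of the initial block $\th_1\cdots\th_p$ at position $p+i$. My approach would be to feed that putative equality back into shift-domination for the later iterate $\si^{2p+i-1}(\th)$ and compare against $\si^{p}(\th)$, whose length-$p$ prefix $\overline{\th_1\cdots\th_{p-1}}(\overline{\th_p}+1)$ is pinned down by the recursion; the reflection structure of $(\th_i)$ then forces a contradiction, the only borderline configuration being $p=1$ with $N$ even and $t_1=N/2-1$ (there $\th_2=\th_1$), which lies outside the scope of the statement since then $(t_1\cdots t_p)^\f$ has no nonzero digit and so is not a quasi-greedy expansion of $1$ — in that case one also invokes Proposition \ref{prop:DK-0}, i.e. the ${\beta}_L$-half of Theorem \ref{th: main results-0}. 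Everything else — the limiting argument in the forward direction and the verification of the second requirement — is routine bookkeeping once the $(t_i)\leftrightarrow(\th_i)$ dictionary is in place.
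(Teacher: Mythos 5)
Your forward implication (admissible $\Rightarrow$ $(\th_i)$ is a quasi-greedy expansion of $1$) is correct and is essentially the paper's argument: both rest entirely on Lemma \ref{lem: keylem}. Specialising the lemma to $i=k+1$ and letting $n\to\f$ gives $\si^k((\th_i))\le(\th_i)$, which together with your (correct, and in the paper only implicit) check that $(\th_i)$ has infinitely many nonzero digits is all Proposition \ref{prop:1} requires; the paper additionally extracts the \emph{strict} inequality from the second half of (\ref{eq: proof-00}), but that extra strength is only needed later, for Theorem \ref{th: DK-admissible}. Your dictionary matching the two conditions of Definition \ref{def:admissible block} with the length-$p$ windows of $(\th_i)$ at positions $i$ and $p+i$ is also exactly the paper's starting point for the converse.

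The converse, however, has a genuine gap at precisely the step you label the main obstacle and then leave as a sketch. Excluding the equality $\th_{p+i}\cdots\th_{2p+i-1}=\th_1\cdots\th_p$ is the entire content of that direction, and ``the reflection structure then forces a contradiction'' is not an argument. (The paper's route: if equality held, shift-domination propagates past the agreeing length-$p$ prefix to give $\overline{t_i\cdots t_p^+}\le\overline{t_1\cdots t_{p-i+1}}$, hence $t_i\cdots t_p^+=t_1\cdots t_{p-i+1}$, and then $t_1\cdots t_{i-1}=\overline{t_{p-i+2}\cdots t_p}$, contradicting the assumed equality; something of this kind must be written out.) More seriously, your disposal of the borderline configuration $p=1$, $N$ even, $t_1=N/2-1$ is wrong on its face: $(t_1)^\f=(N/2-1)^\f$ has every digit nonzero once $N\ge4$, so it is not excluded by infiniteness, and Proposition \ref{prop:DK-0} concerns $(t_1\cdots t_p)^\f$ rather than the actual hypothesis $(\alpha_i(\beta_U))=(\th_i(t_1^+))$ of the converse. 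This configuration cannot be argued away: for $N=4$ and $t_1=1$ one has $(\th_i(2))=(\th_i(22))$, and $22=(21)^+$ with $21$ admissible, so by your own forward direction $(\th_i(2))$ \emph{is} a quasi-greedy expansion of $1$ for some base, while $t_1=1$ is not admissible (since $\overline{1}=2>1$). Hence the equality case genuinely occurs when $p=1$ and $N$ is even; a correct proof of the converse must handle $i\ge2$ by a bootstrap as above, dispose of $i=1$, $p\ge2$ by the parity observation that $\overline{t_1\cdots t_p}=t_1\cdots t_p^+$ is then impossible, and confront (rather than wave away) the remaining $p=1$ case. Your proposal does none of these.
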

\begin{proof}
We first prove the sufficiency. Suppose $(\alpha_i({{\beta}_U}))=(\th_i(t_1\cdots t_p^+))$ for some ${\beta}_U\in(1,N]$. By Definition \ref{def:generalized Thue-Morse sequence} the generalized Thue-Morse sequence $(\th_i(t_1\cdots t_p^+))$ begins with
\begin{equation}\label{eq: sec4-3}
(\th_i(t_1\cdots t_p^+))=t_1\cdots t_p^+\,\overline{t_1\cdots t_p}\,\overline{t_1\cdots t_p^+}\,t_1\cdots t_p^+\cdots.
\end{equation}
Then by Proposition \ref{prop:1} it follows that
\[
t_i\cdots t_p^+\overline{t_1\cdots t_{i-1}}\le t_1\cdots t_p^+\quad\textrm{and}\quad \overline{t_i\cdots t_p t_1\cdots t_{i-1}}\le t_1\cdots t_p^+
\]
for any $1\le i\le p$. By Definition \ref{def:admissible block} it suffices to show that $\overline{t_i\cdots t_p t_1\cdots t_{i-1}}\ne t_1\cdots t_p^+$ for all $1\le i\le p$.

Suppose $\overline{t_i\cdots t_p t_1\cdots t_{i-1}}=t_1\cdots t_p^+$ for some $1\le i\le p$. Then by Proposition \ref{prop:1} and Equation (\ref{eq: sec4-3}) it follows that
\[
\overline{t_i\cdots t_p^+}\le\overline{t_1\cdots t_{p-i+1}}.
\]
Observing by Proposition \ref{prop:1} that $t_i\cdots t_p^+\le t_1\cdots t_{p-i+1}$ we obtain
\[\overline{t_i\cdots t_p t_1\cdots t_p^+}=t_1\cdots t_p^+\overline{t_1\cdots t_{p-i+1}}.
\]
This implies that $i\ne 1$. Again  by Proposition \ref{prop:1} and Equation (\ref{eq: sec4-3}) we obtain
\[
t_1\cdots t_{i-1}=\overline{t_{p-i+2}\cdots t_p}\quad\textrm{for}~2\le i\le p.
\]
This leads to a contradiction with the assumption that $\overline{t_i\cdots t_p t_1\cdots t_{i-1}}=t_1\cdots t_p^+$.

In the following we will show the necessity.
Let $i\ge 1$. Then $i<2^n p$ for some large integer $n\ge 0$. By Lemma \ref{lem: keylem} it follows that
$$
\th_{i+1}\cdots\th_{2^n p}\le\th_1\cdots\th_{2^n p-i}\quad\textrm{and}\quad\overline{\th_1\cdots\th_{i}}<\th_{2^n p-i+1}\cdots\th_{2^n p}.
$$
 This implies
\begin{eqnarray*}
\th_{i+1}\cdots\th_{2^n p}\th_{2^n p+1}\cdots\th_{2^np+i}\cdots&=&\th_{i+1}\cdots\th_{2^np}\overline{\th_1\cdots\th_{i}}\cdots\\
&<&\th_1\cdots\th_{2^n p-i}\th_{2^{n}p-i+1}\cdots\th_{2^n p}\cdots.
\end{eqnarray*}
By Proposition \ref{prop:1} this establishes the proposition.
\end{proof}

Moreover, by using Lemma \ref{lem: keylem} one can show that  $(\th_i)$ is   the unique ${\beta}_U$-expansion of $1$.
\begin{theorem}\label{th: DK-admissible}
  Let $t_1\cdots t_p\in\{0,1,\cdots,N-1\}^p$. Then $t_1\cdots t_p$ is an admissible block if and only if the generalized Thue-Morse sequence $(\th_i)=(\th_i(t_1\cdots t_p^+))$ is the unique expansion of $1$ for some base ${{\beta}_U}$, i.e.,
\[
  \overline{\th_1\th_2\cdots}<\th_{i+1}\th_{i+2}\cdots<\th_1\th_2\cdots\quad\textrm{for any}~i\ge 1.
  \]
\end{theorem}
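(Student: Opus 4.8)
The plan is to deduce the statement from the combinatorial work already in place, namely Lemma~\ref{lem: keylem} and Proposition~\ref{prop:DK-1}, together with Proposition~\ref{prop:1}, the greedy criterion of Proposition~\ref{prop:4-1}, and the fact recalled before Theorem~\ref{th: 2'} that an expansion is the unique expansion of a point precisely when both it and its reflection are greedy. By Proposition~\ref{prop:DK-1}, $t_1\cdots t_p$ is admissible if and only if $(\alpha_i({\beta}_U))=(\theta_i)$ for some ${\beta}_U\in(1,N]$; moreover, once $(\alpha_i({\beta}_U))=(\theta_i)$ we automatically have $\Pi_{{\beta}_U}((\theta_i))=1$ and $1\in\Gamma_{{\beta}_U,N}$ (since ${\beta}_U\le N$). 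So the task reduces to showing that, for the base ${\beta}_U$ furnished by Proposition~\ref{prop:1}, the equality $(\alpha_i({\beta}_U))=(\theta_i)$ is equivalent to the two-sided lexicographic condition $\overline{\theta_1\theta_2\cdots}<\theta_{i+1}\theta_{i+2}\cdots<\theta_1\theta_2\cdots$ for all $i\ge 1$, and that this condition says exactly that $(\theta_i)$ is the unique ${\beta}_U$-expansion of $1$.

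First I would prove necessity: assume $t_1\cdots t_p$ is admissible, so that Lemma~\ref{lem: keylem} is available and, by Proposition~\ref{prop:DK-1}, $(\alpha_i({\beta}_U))=(\theta_i)$. The upper bound $\theta_{i+1}\theta_{i+2}\cdots<\theta_1\theta_2\cdots$ for every $i\ge 1$ is precisely what is extracted in the necessity part of the proof of Proposition~\ref{prop:DK-1}: fixing $i$ and choosing $n$ with $2^np>i$, one combines $\theta_{i+1}\cdots\theta_{2^np}\le\theta_1\cdots\theta_{2^np-i}$ with $\overline{\theta_1\cdots\theta_i}<\theta_{2^np-i+1}\cdots\theta_{2^np}$ (both from Lemma~\ref{lem: keylem}) and the identity $\theta_{2^np+1}\cdots\theta_{2^np+i}=\overline{\theta_1\cdots\theta_i}$ coming from Definition~\ref{def:generalized Thue-Morse sequence}. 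For the lower bound, fix $i\ge 1$, choose $n$ with $2^np\ge i+1$, and apply Lemma~\ref{lem: keylem} with index $i+1$ to obtain the strict inequality $\overline{\theta_1\cdots\theta_{2^np-i}}<\theta_{i+1}\cdots\theta_{2^np}$ between two blocks of equal length $2^np-i$; since a strict inequality between blocks of equal length is witnessed at their first point of disagreement, it is preserved under arbitrary extension, whence $\overline{\theta_1\theta_2\cdots}<\theta_{i+1}\theta_{i+2}\cdots$. This yields the two-sided condition. Finally, since $(\alpha_i({\beta}_U))=(\theta_i)$ and $\Pi_{{\beta}_U}((\theta_i))=1$, the inequality $\theta_{n+1}\theta_{n+2}\cdots<\theta_1\theta_2\cdots$ (valid for every $n\ge 1$, in particular whenever $\theta_n<N-1$) shows via Proposition~\ref{prop:4-1} that $(\theta_i)$ is the greedy ${\beta}_U$-expansion of $1$; reflecting the lower bound gives $\overline{\theta_{n+1}\theta_{n+2}\cdots}<\theta_1\theta_2\cdots$ for every $n\ge 1$, so $(\overline{\theta_i})$ is a greedy expansion as well, and hence $(\theta_i)$ is the unique ${\beta}_U$-expansion of $1$.

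For sufficiency, suppose the two-sided lexicographic condition holds for some base ${\beta}_U$ (equivalently, $(\theta_i)$ is the unique ${\beta}_U$-expansion of $1$). In particular $\theta_{i+1}\theta_{i+2}\cdots\le\theta_1\theta_2\cdots$ for all $i\ge 0$, so Proposition~\ref{prop:1} yields that $(\theta_i)$ is the quasi-greedy ${\beta}_U$-expansion of $1$, i.e.\ $(\alpha_i({\beta}_U))=(\theta_i)$, and then Proposition~\ref{prop:DK-1} forces $t_1\cdots t_p$ to be admissible. If instead one begins from the phrasing ``$(\theta_i)$ is the unique ${\beta}_U$-expansion of $1$'', one first recovers the two-sided condition: being the unique expansion, $(\theta_i)$ coincides with its own quasi-greedy expansion, which gives the upper bounds, while Theorem~\ref{th: 2'} applied to $x=1$ supplies the strict lower bounds.

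I do not anticipate a genuine obstacle: the substantive combinatorics is entirely contained in Lemma~\ref{lem: keylem} and Proposition~\ref{prop:DK-1}, which are already in hand. The only points needing care are the quantifier bookkeeping---choosing $n$ large enough that the finite blocks supplied by Lemma~\ref{lem: keylem} already resolve the comparison of the infinite tails---and making the translation between ``unique ${\beta}_U$-expansion of $1$'' and the two-sided lexicographic inequalities fully precise, which is routine once Propositions~\ref{prop:1} and~\ref{prop:4-1} and Theorem~\ref{th: 2'} are invoked.
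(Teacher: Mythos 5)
Your proposal is correct and follows exactly the route the paper intends: the paper offers no written proof beyond the remark that the theorem follows from Lemma~\ref{lem: keylem}, and your argument supplies precisely that derivation (the upper bound as in the necessity part of Proposition~\ref{prop:DK-1}, the lower bound by applying Lemma~\ref{lem: keylem} at index $i+1$ and propagating a strict inequality between equal-length blocks to the infinite tails), combined with Proposition~\ref{prop:DK-1} for the equivalence with admissibility and Propositions~\ref{prop:1} and~\ref{prop:4-1} for the translation into uniqueness of the expansion of $1$. The quantifier bookkeeping (choosing $2^np>i$, resp.\ $2^np\ge i+1$) is handled correctly, so there is nothing to add.
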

Recall from (\ref{eq: gn}) that $G_N$ is the generalized golden ratio. We will show that the admissible intervals are all included in $[G_N,N)$. In Proposition \ref{prop: maximal admissible interval-cover} we will show that all of these admissible intervals cover $(\beta_c(N), N)$ a.e., where $\beta_c(N)(>G_N)$ is the Komornik-Loreti constant.
\begin{proposition}\label{prop:DK-2}
  Let $[{{\beta}_L},{{\beta}_U}]$ be an admissible interval generated by  $t_1\cdots t_p$. Then $[{{\beta}_L},{{\beta}_U}]\subseteq[G_N, N)$.
\end{proposition}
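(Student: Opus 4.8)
The plan is to establish the two inclusions ${\beta}_L \ge G_N$ and ${\beta}_U < N$ separately, since $[{\beta}_L,{\beta}_U]\subseteq[G_N,N)$ then follows at once from ${\beta}_L \le {\beta}_U$ (Theorem \ref{th: main results-0}) together with the fact, recorded in Proposition \ref{prop:1}, that ${\beta}\mapsto(\alpha_i({\beta}))$ is a strictly increasing bijection onto the admissible sequences. The strict inequality ${\beta}_U < N$ should be the easy half: since $t_1\cdots t_p$ is admissible we have $t_p < N-1$, so $t_1\cdots t_p^+ \ne (N-1)^p$, and hence the generalized Thue-Morse sequence $(\theta_i(t_1\cdots t_p^+))$ is not the all-$(N-1)$ sequence. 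As $(\alpha_i(N)) = (N-1)^\f$ and the map ${\beta}\mapsto(\alpha_i({\beta}))$ is strictly increasing, this forces ${\beta}_U < N$.

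For the lower bound ${\beta}_L \ge G_N$, the idea is to use Proposition \ref{prop:1} in the contrapositive: it suffices to show that the sequence $(t_1\cdots t_p)^\f$ is lexicographically at least as large as $(\alpha_i(G_N))$, the quasi-greedy expansion of $1$ in base $G_N$. I would first compute $(\alpha_i(G_N))$ explicitly from the definition \eqref{eq: gn}: when $N = 2k+1$ one checks that $(\alpha_i(G_N)) = k^\f$, and when $N = 2k$ one gets $(\alpha_i(G_N)) = (k\,\overline{k})^\f = (k\,(k-1))^\f$ — these are exactly the smallest admissible sequences for which $\U{N}$ is uncountable/nontrivial, which is why $G_N$ is the generalized golden ratio. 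So the task reduces to the combinatorial claim: every admissible block $t_1\cdots t_p$ satisfies $(t_1\cdots t_p)^\f \ge (\alpha_i(G_N))$. I would prove this by extracting, from the admissibility inequalities $\overline{t_1\cdots t_p}\le t_i\cdots t_p t_1\cdots t_{i-1}$ and $t_i\cdots t_p^+\,\overline{t_1\cdots t_{i-1}}\le t_1\cdots t_p^+$ (equivalently the chain \eqref{eq: admissible block}), a lower bound on $t_1$ and then on the tail. Taking $i=1$ in the first inequality gives $\overline{t_1\cdots t_p}\le t_1\cdots t_p$, i.e. $\overline{t_1}\le t_1$, so $t_1 \ge \lceil (N-1)/2\rceil$; combining this with the second family of inequalities (which controls how $t_1\cdots t_p$ compares to the reflected shifts of itself) one pins down that $(t_1\cdots t_p)^\f$ cannot drop below $k^\f$ (resp. $(k(k-1))^\f$), because a strictly smaller admissible periodic sequence would violate one of the two inequalities at an appropriate index $i$.

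The main obstacle I anticipate is exactly this last combinatorial step — ruling out admissible blocks whose periodic extension is lexicographically below $(\alpha_i(G_N))$. The subtlety is that admissibility is a two-sided condition (both a lower bound coming from the reflection $\overline{t_1\cdots t_p}$ and an upper bound coming from $t_1\cdots t_p^+$), and one must use them in tandem: the reflection inequality forces the digits not to be too small, while the $(\cdot)^+$ inequality forbids "recovering" from a small digit later in the block. A clean way to organize this is to argue by contradiction — assume $(t_1\cdots t_p)^\f < (\alpha_i(G_N))$, let $j$ be the first coordinate where $t_1\cdots t_p$ falls below the periodic pattern, and show that the admissibility inequality at index $i = j$ (or $i = j+1$) is then violated, using Proposition \ref{prop:1} applied to ${\beta}_L$ to know that $(t_1\cdots t_p)^\f$ is itself the quasi-greedy expansion of $1$ at ${\beta}_L$. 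Cases $N$ even and $N$ odd are handled in parallel but with the minor notational nuisance that the odd case has period $1$ while the even case has period $2$; I would treat $N=2k$ in detail and indicate the (simpler) odd modification.
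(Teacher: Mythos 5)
Your proposal is correct and follows essentially the same route as the paper: the upper bound ${\beta}_U<N$ comes from $(\theta_i(t_1\cdots t_p^+))<(N-1)^\f=(\alpha_i(N))$ together with the monotonicity in Proposition \ref{prop:1}, and the lower bound comes from $t_1\ge\lceil(N-1)/2\rceil$ plus the combinatorial fact that admissibility forces $(t_1\cdots t_p)^\f\ge(\alpha_i(G_N))$. The only cosmetic difference is that the paper inserts the intermediate inequality $(t_1\cdots t_p)^\f\ge(t_1\overline{t_1})^\f$ (quoting Allouche--Frougny) where you propose a direct first-disagreement-index argument; these amount to the same verification.
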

\begin{proof}
  Clearly, by Definition \ref{def:generalized Thue-Morse sequence} of the generalized Thue-Morse sequence $(\th_i)=(\th_i(t_1\cdots t_p^+))$ it follows that
\[
  (\alpha_i({\beta}_U))=(\th_i)<(N-1)^\f=(\alpha_i(N)).
  \]
By Proposition \ref{prop:1} this implies  ${{\beta}_U}<N$. In the following we will show ${{\beta}_L}\ge G_N$.

 Since $t_1\cdots t_p$ is admissible, it yields that $t_1\ge\overline{t_1}=N-1-t_1$. Then $t_1\ge \lceil(N-1)/2\rceil$.
 By Definition \ref{def:admissible block} of an admissible block one can directly verify that $(t_1\cdots t_p)^\infty \geq (t_1\overline{t_1})^\infty$ (see also \cite[Proposition 2]{Allouche_Frougny_2009}).
  Note by (\ref{eq: gn}) that
$
(\alpha_i(G_N))=(t_1\cdots t_p)^\f=(\lceil(N-1)/2\rceil\overline{\lceil(N-1)/2\rceil})^\f.
$
Then
\[
  (\alpha_i({{\beta}_L}))=(t_1\cdots t_p)^\f\ge (t_1\overline{t_1})^\f\ge(\alpha_i(G_N)).
  \]
  By Proposition \ref{prop:1} this implies ${{\beta}_L}\ge G_N$.
\end{proof}
In the following we will investigate the algebraic properties of the generalized Thue-Morse sequences $(\th_i)$ and show that the De Vries-Komornik constant ${{\beta}_U}$ is transcendental.
 Recall that $(\tau_i)_{i=0}^\f$ is the classical Thue-Morse sequence beginning with
\[
0110\,1001\,1001\,0110\,1001\,0110\,0110\,1001\cdots.
\]
We write two equivalent defintions for this sequence $(\tau_i)$ (see, e.g., \cite{Allouche_Shallit_1999} for details).
\renewcommand{\labelenumi}{(\Roman{enumi})}
\begin{enumerate}
  \item Set $\tau_0=0, \tau_{2^{n}}=1$ for $n=0,1,\cdots,$ and
\[
\tau_{2^n+k}=1-\tau_k\quad\textrm{if}\quad 1\le k<2^n, n=1,2,\cdots.
\]
  \item For a nonnegative integer $i$ we consider its dyadic expansion
\[
i=\ep_n 2^n+\ep_{n-1}2^{n-1}+\cdots+ \ep_0,\quad \ep_k\in\{0,1\}.
\]
Then we set
\[
\tau_i=\left\{\begin{array}{ll}
        0 & \textrm{if}~ \sum_{j=0}^n\ep_j~ \textrm{is even}, \\
         1& \textrm{if}~ \sum_{j=0}^n\ep_j ~\textrm{is odd}.
       \end{array}
       \right.
\]
\end{enumerate}
Based on $(\tau_i)$ we give an equivalent definition for the generalized Thue-Morse sequence $(\th_i)$.
\begin{lemma}\label{lem:DK-1}
Let $(\theta_i)=(\theta_i(t_1\cdots t_p^+))$ be the generalized Thue-Morse sequence generated by  $t_1\cdots t_p^+$. Then for any integer  $\ell=i p+q$ with $i\ge 0, 1\le q\le p$ we have
\begin{equation}\label{eq:DK-2}
\theta_\ell=\left\{
\begin{array}{ll}
  t_q+\tau_i(\overline{t_q}-t_q), & \textrm{if} \quad 1\le q<p \\
  t_q+\tau_i(\overline{t_q}-t_q)+(\tau_{i+1}-\tau_i), & \textrm{if} \quad q=p.
\end{array}
\right.
\end{equation}
\end{lemma}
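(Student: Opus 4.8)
The plan is to establish the two cases of (\ref{eq:DK-2}) by a single induction on $m$, in which one compares the prefix $B_m:=\theta_1\cdots\theta_{2^m p}$ of the generalized Thue--Morse sequence with the prefix $\tau_0\cdots\tau_{2^m-1}$ of the classical one. The key observation is that the recursion $B_{m+1}=B_m\,\overline{B_m}\,^+$ of Definition \ref{def:generalized Thue-Morse sequence} runs in lock-step with the relation $\tau_{2^m+k}=1-\tau_k$ $(0\le k<2^m)$ from definition (I); the only mismatch is the single increment $(\cdot)^+$, and it always occupies a position that is $\equiv0\pmod p$. This suggests reading $(\theta_i)$ one residue class at a time: for $1\le q\le p$ consider the column $(\theta_{ip+q})_{i\ge0}$, and treat the columns $q<p$ (never affected by the increment) separately from the last column $q=p$.

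I would first dispose of the columns $1\le q<p$. Because $\overline{B_m}\,^+$ differs from $\overline{B_m}$ only in its last entry, and that last entry has index $\equiv0\pmod p$ and so does not lie in slot $q$, the slot-$q$ column of $B_{m+1}$ is the slot-$q$ column of $B_m$ followed by the reflection of the latter. This is precisely the Thue--Morse doubling recursion under the dictionary $0\leftrightarrow t_q$, $1\leftrightarrow\overline{t_q}$, with reflection playing the role of $0$--$1$ complementation; since the base case $m=0$ reads $\theta_q=t_q\leftrightarrow\tau_0=0$, induction yields $\theta_{ip+q}=t_q$ if $\tau_i=0$ and $\theta_{ip+q}=\overline{t_q}$ if $\tau_i=1$, that is, $\theta_{ip+q}=t_q+\tau_i(\overline{t_q}-t_q)$.

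The case $q=p$ is the heart of the matter: the slot-$p$ column of $B_{m+1}$ is the slot-$p$ column of $B_m$ followed by \emph{its reflection with the last entry raised by one}. I would guess that $\theta_{(i+1)p}=\bigl(t_p+\tau_i(\overline{t_p}-t_p)\bigr)+(\tau_{i+1}-\tau_i)$, a Thue--Morse symbol plus a small correction, and verify this by induction on $m$, writing $i=2^m+k$ with $0\le k<2^m$. When $k<2^m-1$ the recursion is pure reflection, $\theta_{(i+1)p}=N-1-\theta_{(k+1)p}$, and one applies $\tau_{2^m+k}=1-\tau_k$ and $\tau_{2^m+k+1}=1-\tau_{k+1}$ to check that both the symbol and the correction term get reflected consistently with the formula. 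When $k=2^m-1$ one has $i+1=2^{m+1}$ and $\theta_{2^{m+1}p}=N-\theta_{2^m p}$; here the identities $\tau_{2^{m+1}-1}=1-\tau_{2^m-1}$ and $\tau_{2^{m+1}}=\tau_{2^m}=1$ show that the two reflected symbols sum to $N-1$ and the two correction terms sum to $1$, so the predicted values also sum to $N$, as required. The base case $m=0$ is immediate since $\theta_p=t_p+1=t_p+(\tau_1-\tau_0)$. Combining the two column formulas gives (\ref{eq:DK-2}) for the unique representation $\ell=ip+q$ of each $\ell\ge1$.

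I expect the bookkeeping in the last column to be the main obstacle: one must carry \emph{two} pieces of Thue--Morse data simultaneously (the symbol $\tau_i$ and the forward difference $\tau_{i+1}-\tau_i$) and confirm that at every junction $i=2^{m+1}-1$ the unit contributed by the increment $(\cdot)^+$ is absorbed exactly by the jump in $\tau_{i+1}-\tau_i$, never producing a digit outside $\{0,\dots,N-1\}$. The hypothesis $t_p<N-1$ (hence $\overline{t_p}\ge1$) is what guarantees $\theta_{2^m p}\ge1$, which is what makes the increment in Definition \ref{def:generalized Thue-Morse sequence} legitimate in the first place.
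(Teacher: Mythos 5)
Your proof is correct and takes essentially the same approach as the paper: both verify that the explicit formula is consistent with the doubling recursion $\theta_{2^np+k}=\overline{\theta_k}$ ($1\le k<2^np$) and $\theta_{2^{n+1}p}=N-\theta_{2^np}$, with your endpoint computation (the two predicted values summing to $N$) being exactly the paper's calculation. The only cosmetic difference is that for the interior positions the paper invokes the binary-digit-sum characterization of $\tau_i$ (its Definition (II)) to treat all $k<2^np$ at once, whereas you run the induction column-by-column using only the recursion (I); both are fine.
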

\begin{proof}
Recall from  Definition \ref{def:generalized Thue-Morse sequence} that $(\eta_i)$ is the generalized Thue-Morse sequence generated by $t_1\cdots t_p^+$ if and only if
$\eta_1\cdots\eta_p=t_1\cdots t_p^+$, and for any $n\ge 0$ we have
\begin{equation}\label{eq: condition for DK}
  \eta_{2^{n+1}p}=\overline{\eta_{2^n p}}+1=N-\eta_{2^n  p},\quad  \eta_{2^n p+k}=\overline{\eta_k}\quad\textrm{for all}~ 1\le k<2^n p.
   \end{equation}
Clearly, by using $\tau_0=0, \tau_1=1$ in Equation (\ref{eq:DK-2}) it yields that $\theta_1\cdots\theta_p=t_1\cdots t_p^+$. Then it suffices to show that the sequence $(\theta_i)$ given in Equation (\ref{eq:DK-2}) satisfies the  conditions in (\ref{eq: condition for DK}).

For $n\ge 0$, by using Definition (I) of $(\tau_i)$  and Equation (\ref{eq:DK-2}) it follows that
\begin{eqnarray*}
  &&\theta_{2^{n+1}p}+\theta_{2^n p}\\
  &=&\big(t_p+\tau_{2^{n+1}-1}(\overline{t_p}-t_p)+(\tau_{2^{n+1}}-\tau_{2^{n+1}-1})\big)\\
  &&+\big(t_p+\tau_{2^{n}-1}(\overline{t_p}-t_p)+(\tau_{2^{n}}-\tau_{2^{n}-1})\big)\\
&=&t_p+(1-\tau_{2^{n}-1})(\overline{t_p}-t_p)+(1-(1-\tau_{2^n-1}))\\
&&+t_p+\tau_{2^{n}-1}(\overline{t_p}-t_p)+(1-\tau_{2^{n}-1})\\
&=&t_p+\overline{t_p}+1=N,
  \end{eqnarray*}
i.e., $\theta_{2^{n+1}p}=N-\theta_{2^n  p}=\overline{\theta_{2^n p}}+1$.

For $1\le k<2^n p$ we can write
$
k=(\ep_{n-1}2^{n-1}+\cdots+\ep_1 2^1+\ep_0)p+q$ with $ \ep_{n-1},\cdots,\ep_0\in\{0,1\}$ and $ 1\le q\le p$. Without loss of generality we may assume $1\le q<p$.
If $\sum_{j=0}^{n-1}\ep_j$ is even, then by using Definition (II) of $(\tau_i)$ and Equation (\ref{eq:DK-2}) it follows that
\[
\theta_k=\theta_{(\sum_{j=0}^{n-1}\ep_j 2^j)p+q}=t_q+0(\overline{t_q}-t_q)=t_q,
\]
and
\[
\theta_{2^n p+k}=\theta_{(2^n+\sum_{j=0}^{n-1}\ep_j 2^j)p+q}=t_q+1(\overline{t_q}-t_q)=\overline{t_q}.
\]
So, $\theta_{2^n p+k}=\overline{\theta_k}$. Similarly, if $\sum_{j=0}^{n-1}\ep_j$ is odd , one can also show that $\theta_{2^n p+k}=\overline{\theta_k}$.
\end{proof}
 The following theorem for transcendental numbers is due to Mahler \cite{Mahler_1976} (see also \cite{Komornik_Loreti_2002}).
\begin{theorem}[Mahler \cite{Mahler_1976}]\label{th:Mahler}
  If $z$ is an algebraic number in the open unit disc, then the number
\[
  Z:=\sum_{i=1}^\f \tau_i z^i
  \]
  is transcendental, where $(\tau_i)$ is the classical Thue-Morse sequence.
\end{theorem}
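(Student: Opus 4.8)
The plan is to reduce the statement to the transcendence of one Mahler function and then run Mahler's method. Write $\alpha:=z$; we may assume $\alpha\ne 0$ (for $\alpha=0$ one has $Z=0$), so $0<|\alpha|<1$. Put
\[
F(w):=\prod_{n\ge 0}\bigl(1-w^{2^n}\bigr),
\]
an analytic function on the open unit disc. Expanding the product and using that every $m\ge 1$ has a unique binary representation, the coefficient of $w^m$ in $F$ is $(-1)^{s_2(m)}=1-2\tau_m$, where $s_2(m)$ denotes the number of ones in the binary expansion of $m$; since $\tau_0=0$ this gives $F(w)=\tfrac1{1-w}-2\sum_{m\ge 1}\tau_m w^m$, hence
\[
Z=\tfrac12\Bigl(\tfrac1{1-\alpha}-F(\alpha)\Bigr).
\]
Because $\alpha$ is algebraic and $\alpha\ne 1$, the number $\tfrac1{1-\alpha}$ is algebraic, so it suffices to prove that $F(\alpha)$ is transcendental.

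I would first record two facts about $F$. It satisfies the homogeneous Mahler functional equation $F(w)=(1-w)F(w^2)$, hence, by iteration,
\[
F(w)=\Bigl(\prod_{j=0}^{k-1}(1-w^{2^j})\Bigr)F\bigl(w^{2^k}\bigr)\qquad(k\ge 1).
\]
Moreover $F$ is transcendental over $\mathbb{C}(w)$: the unit circle is a natural boundary of $F$ — any analytic continuation across an arc would, by the functional equation, vanish on the set of $2^k$-th roots of unity in that arc, which is dense there, forcing $F\equiv0$ — while an algebraic function continues meromorphically across every arc outside a finite set of branch points; equivalently, a rational $F$ would make $(\tau_i)$ eventually periodic, which it is not. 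I would treat both as standard.

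To prove $F(\alpha)$ transcendental I would argue by contradiction, supposing $F(\alpha)$ algebraic. For each large integer $L$, Siegel's lemma yields a nonzero $Q_L(X,Y)\in\mathbb{Z}[X,Y]$ of degree at most $L$ in each variable, with coefficients of height at most $C^{L^2}$, such that the analytic function $E_L(w):=Q_L\bigl(w,F(w)\bigr)$ vanishes at $w=0$ to order at least $cL^2$ — possible because this imposes slightly fewer than $L^2$ homogeneous linear conditions on the $\sim L^2$ coefficients of $Q_L$. Since $F$ is transcendental over $\mathbb{C}(w)$ we have $E_L\not\equiv 0$, so $E_L$ has only finitely many zeros in the punctured disc; as $\alpha^{2^k}\to 0$, for each $L$ one can pick $k=k(L)$ as large as desired with $E_L(\alpha^{2^k})\ne 0$. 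Using the iterated functional equation to write $F(\alpha^{2^k})=F(\alpha)\big/\prod_{j=0}^{k-1}(1-\alpha^{2^j})$, the number $\xi_{L,k}:=E_L(\alpha^{2^k})=Q_L\bigl(\alpha^{2^k},F(\alpha^{2^k})\bigr)$ is a nonzero algebraic number of bounded degree over $\mathbb{Q}$ and logarithmic height bounded in terms of $L$ and $2^k$. On the other hand the vanishing order at $0$ together with $|\alpha^{2^k}|=|\alpha|^{2^k}$ gives
\[
|\xi_{L,k}|\le C_1^{L^2}\,|\alpha|^{c\,L^2\,2^k},
\]
which for fixed large $L$ and then sufficiently large $k$ violates the Liouville lower bound $|\xi_{L,k}|\ge h(\xi_{L,k})^{-C_2}$ (with $C_2$ bounded). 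This contradiction proves that $F(\alpha)$, and hence $Z$, is transcendental.

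The binary-digit identity, the functional equation, and the (standard) transcendence of $F$ over $\mathbb{C}(w)$ are routine. I expect the real work to be the Mahler-method core of the third paragraph — the Siegel's-lemma construction of $E_L$ with the stated degree, height and vanishing-order bounds, and then tracking heights and degrees carefully enough through $\xi_{L,k}$ that the analytic upper bound genuinely beats the arithmetic lower bound. Since this is exactly Mahler's argument, in the paper one simply cites \cite{Mahler_1976}.
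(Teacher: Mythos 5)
The paper does not prove this statement at all---it is imported directly from Mahler's 1976 lecture notes \cite{Mahler_1976} (see also \cite{Komornik_Loreti_2002})---and your proposal is a correct outline of precisely the method used there: reduce to the Mahler function $F(w)=\prod_{n\ge 0}(1-w^{2^n})$ via the binary digit-sum identity, exploit the functional equation $F(w)=(1-w)F(w^2)$, and run the auxiliary-polynomial construction against a Liouville lower bound at the points $\alpha^{2^k}$. One sub-step is misstated, though the conclusion you need is standard and true: the iterated functional equation does not force an analytic continuation of $F$ to vanish at the $2^k$-th roots of unity, since at a primitive $2^k$-th root of unity $\zeta$ every factor $1-\zeta^{2^j}$ with $j<k$ is nonzero; the correct justifications for the transcendence of $F$ over $\mathbb{C}(w)$ are either Fatou's theorem (a power series with integer coefficients and radius of convergence $1$ that is algebraic over $\mathbb{C}(w)$ must be rational) combined with the non-periodicity of the Thue--Morse sequence, or the observation that $F(r)\to 0$ faster than any power of $1-r$ as $r\to 1^-$, which precludes analytic continuation of $F$ past $w=1$ and hence, via the functional equation, past any dyadic root of unity.
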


\begin{proof}[Proof of Theorem \ref{th: main results-0}]
Clearly, by Proposition \ref{prop:1} ${{\beta}_L}<{{\beta}_U}$.
By Propositions \ref{prop:DK-0}, \ref{prop:DK-1} and \ref{prop:DK-2} it remains to show that the De Vries-Komornik constant ${{\beta}_U}$ is transcendental.

Let $(\theta_\ell)=(\theta_\ell(t_1\cdots t_p^+))$ be the generalized Thue-Morse sequence generated by the block $t_1\cdots t_p^+$. By the definition of ${{\beta}_U}$ we have
\[
1=\sum_{\ell=1}^\f\theta_\ell{{\beta}_U}^{-\ell}.
\]
For any integer $\ell\ge 1$, let $\ell=i p+q$ with $i\ge 0$ and $1\le q\le p$.  Then by using Lemma \ref{lem:DK-1} we can rewrite the above equation as follows.
\begin{eqnarray*}
  1&=&\sum_{\ell=1}^\f \theta_\ell{{\beta}_U}^{-\ell}=\sum_{i=0}^\f\sum_{q=1}^p \theta_{ip+q}{{\beta}_U}^{-ip-q}\\
  &=&\sum_{i=0}^\f{{\beta}_U}^{-ip}\Big(\sum_{q=1}^p\big(t_q+\tau_i(\overline{t_q}-t_q)\big){{\beta}_U}^{-q}+(\tau_{i+1}-\tau_i){{\beta}_U}^{-p}\Big)\\
  &=&\sum_{i=0}^\f{{\beta}_U}^{-ip}\Big(\sum_{q=1}^p t_q{{\beta}_U}^{-q}\Big)+\sum_{i=0}^\f\tau_i{{\beta}_U}^{-ip}\Big(\sum_{q=1}^p (\overline{t_q}-t_q){{\beta}_U}^{-q}\Big)\\
  &&+\sum_{i=0}^\f(\tau_{i+1}-\tau_i){{\beta}_U}^{-ip-p}\\
  &=&\frac{\sum_{q=1}^p t_q{{\beta}_U}^{-q}}{1-{{\beta}_U}^{-p}} +\Big(\sum_{i=1}^\f \tau_i({{\beta}_U}^{-p})^{i}\Big)\Big(\sum_{q=1}^p (\overline{t_q}-t_q){{\beta}_U}^{-q}\Big)\\
  &&+\sum_{i=1}^\f \tau_i({{\beta}_U}^{-p})^{i}-{{\beta}_U}^{-p}\sum_{i=1}^\f \tau_i({{\beta}_U}^{-p})^{i},
  \end{eqnarray*}
where the last equality holds since $\tau_0=0$. Rearranging  the above equation it gives
\[
\sum_{i=1}^\f \tau_i({{\beta}_U}^{-p})^{i}=\frac{1-{{\beta}_U}^{-p}-\sum_{q=1}^p t_q{{\beta}_U}^{-q}}{(1-{{\beta}_U}^{-p})\Big(1-{{\beta}_U}^{-p}+\sum_{q=1}^p(\overline{t_q}-t_q){{\beta}_U}^{-q}\Big)}.
\]
If ${{\beta}_U}>1$ is an algebraic number, then the right-hand side would be algebraic, while the left hand side would be transcendental by Theorem \ref{th:Mahler}. This contradiction implies that ${{\beta}_U}$ is transcendental.
\end{proof}

\section{Proof of Theorem \ref{th: main results-1}}\label{sec: proof of th2}
First we will show that all of the admissible intervals cover almost every point of $(\beta_c(N), N)$. Let $\mathbf{U}$ be the set of  $\beta\in(1,N]$ for which $1\in\Ga{N}$ has a unique  $\beta$-expansion, i.e., there exists a unique sequence $(d_i)\in\{0,1,\cdots,N-1\}^\f$ such that $1=\sum_{i=1}^\f d_i/\beta^i$. Let $\overline{\mathbf{U}}$ be the closure of $\mathbf{U}$. The following proposition for $\overline{\mathbf{U}}$ was first proved by Komornik and Loreti  \cite{Komornik_Loreti_2007} for  $\beta\in[N-1, N]$ and recently proved by Komornik et al. in \cite{Komornik_Kong_Li_2014}.
\begin{proposition}\label{prop: unique expansion of 1}
 For  $\beta\in(1,N]$ let $(\al_i)=(\al_i(\beta))$ be the quasi-greedy $\beta$-expansion of $1$. Then $\beta\in \overline{\mathbf{U}}$ if and only if
 \[
  \overline{\al_1\al_2\cdots}<\al_{k+1}\al_{k+2}\cdots\le \al_1\al_2\cdots\quad\textrm{for all} ~ k\ge 0.
\]
  Moreover, $\overline{\mathbf{U}}$ has zero Lebesgue measure.
\end{proposition}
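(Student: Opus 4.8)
The plan is to prove $\overline{\mathbf U}=\mathcal V$, where $\mathcal V$ denotes the set of all $\beta\in(1,N]$ whose quasi-greedy expansion $(\al_i)=(\al_i(\beta))$ of $1$ satisfies the two displayed inequalities for every $k\ge0$, and then to deduce the measure statement. By Proposition \ref{prop:1} every quasi-greedy expansion satisfies $\al_{k+1}\al_{k+2}\cdots\le\al_1\al_2\cdots$ automatically, so membership in $\mathcal V$ reduces to the single family of strict inequalities $\overline{\al_1\al_2\cdots}<\al_{k+1}\al_{k+2}\cdots$, $k\ge0$; on reflection this reads $\si^k\big(\overline{(\al_i)}\big)<(\al_i)$ for all $k\ge0$, with $\si$ the shift. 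First, $\mathbf U\subseteq\mathcal V$: if $\beta\in\mathbf U$ then the greedy and quasi-greedy expansions of $1$ must coincide (else $1$ has two expansions), so $(\al_i)$ is the unique expansion of $1$; applying the characterization of unique expansions in Theorem \ref{th: 2'} to $x=1$ and $(d_i)=(\al_i)$ --- where the index $n$ of that theorem equals $1$ since $\al_1\ge1$, and where the equivalent requirement that $\overline{(\al_i)}$ be a greedy expansion supplies the inequality for $k=0$ --- yields exactly $\overline{\al_1\al_2\cdots}<\al_{k+1}\al_{k+2}\cdots$ for all $k\ge0$.

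The next step is to show that $\mathcal V$ is closed. If $\beta_m\to\beta$ with $\beta_m\in\mathcal V$, then $(\al_i(\beta_m))\to(\al_i(\beta))$ coordinatewise by the continuity in Proposition \ref{prop:1}, so all non-strict inequalities pass to the limit, and the only thing to rule out is $\si^k\big(\overline{(\al_i(\beta))}\big)=(\al_i(\beta))$ for some $k\ge0$. For $k=0$ this forces $(\al_i(\beta))=((N-1)/2)^\infty$; but a sequence in $\mathcal V$ cannot begin with a long block $((N-1)/2)^m$ followed by a strictly smaller digit --- its reflection would then dominate it --- so no point of $\mathcal V$ converges to the corresponding $\beta$. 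For $k\ge1$ the equality forces $\al_{k+j}(\beta)=\overline{\al_j(\beta)}$ for all $j$, i.e.\ $(\al_i(\beta))=(w\,\overline w)^\infty$ with $w=\al_1(\beta)\cdots\al_k(\beta)$; then each $(\al_i(\beta_m))$ agrees with $(w\overline w)^\infty$ on arbitrarily long prefixes before deviating, and examining the first point of deviation lexicographically shows that it forces a violation of $\si^\ell(\al_i(\beta_m))\le(\al_i(\beta_m))$ or of $\si^\ell\big(\overline{(\al_i(\beta_m))}\big)<(\al_i(\beta_m))$ for a suitable $\ell$ --- a contradiction. Hence $\mathcal V$ is closed, and together with $\mathbf U\subseteq\mathcal V$ this gives $\overline{\mathbf U}\subseteq\mathcal V$.

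For the reverse inclusion $\mathcal V\subseteq\overline{\mathbf U}$, take $\beta\in\mathcal V$ and set $(\al_i)=(\al_i(\beta))$. If $(\al_i)$ is aperiodic, then it is the greedy expansion of $1$, and the strict inequalities $\si^k\big(\overline{(\al_i)}\big)<(\al_i)$ together with Proposition \ref{prop:4-1} show that $\overline{(\al_i)}$ is also a greedy expansion; hence $1$ has a unique expansion, i.e.\ $\beta\in\mathbf U\subseteq\overline{\mathbf U}$. If $(\al_i)=(t_1\cdots t_p)^\infty$ is periodic, I would approximate $\beta$ from inside $\mathbf U$ as follows: for each $n\ge1$ let $s^{(n)}:=(t_1\cdots t_p)^n0$, whose last digit $0<N-1$; a direct lexicographic verification --- using the full strength of $(\al_i)\in\mathcal V$, which rules out the degenerate periodic cases such as $(10)^\infty$ and $((N-1)/2)^\infty$ --- shows that $s^{(n)}$ is an admissible block in the sense of Definition \ref{def:admissible block}. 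By Theorem \ref{th: DK-admissible} the generalized Thue--Morse sequence generated by $(s^{(n)})^+=(t_1\cdots t_p)^n1$ is then the unique expansion of $1$ for some base $\beta^{(n)}\in\mathbf U$; since that sequence begins with $(t_1\cdots t_p)^n1$, which agrees with $(t_1\cdots t_p)^\infty$ on its first $np$ digits, we have $(\al_i(\beta^{(n)}))\to(\al_i(\beta))$, whence $\beta^{(n)}\to\beta$ by the injectivity and continuity of $\beta\mapsto(\al_i(\beta))$ (Proposition \ref{prop:1}; no accumulation point can be $1$ since $(t_1\cdots t_p)^\infty\ne10^\infty$). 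Thus $\beta\in\overline{\mathbf U}$ and $\overline{\mathbf U}=\mathcal V$.

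Finally, the two cases above identify $\mathbf U=\{\beta\in\mathcal V:(\al_i(\beta))\text{ aperiodic}\}$, so $\mathcal V\setminus\mathbf U$ --- the bases $\beta\in\mathcal V$ with $(\al_i(\beta))$ periodic, each determined by a finite word --- is countable; hence $\overline{\mathbf U}=\mathcal V$ and $\mathbf U$ have equal Lebesgue measure, and the proof closes by invoking the classical fact that $\mathbf U$ is Lebesgue-null (for almost every $\beta$, the number $1$ has a continuum of $\beta$-expansions). The hard part will be the closedness of $\mathcal V$, and in particular the lexicographic argument excluding a periodic ``antipalindromic'' limit $(w\overline w)^\infty$; a secondary technical chore is checking that the blocks $(t_1\cdots t_p)^n0$ are admissible, together with the minor boundary case $\beta=N$ (where $(\al_i)=(N-1)^\infty$ and the same construction applies).
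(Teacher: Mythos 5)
The paper does not actually prove this proposition; it cites Komornik--Loreti (2007) and Komornik--Kong--Li (2014), so your strategy (showing $\mathbf U\subseteq\mathcal V$, that $\mathcal V$ is closed, and that periodic points of $\mathcal V$ are approximable by De Vries--Komornik constants) must stand on its own. The overall architecture is the standard one and the first two parts are essentially sound: the closedness argument is only sketched at its crux, but the deviation analysis you describe does go through (in the limit case $(\al_i)=(w\overline w)^\infty$ with $|w|=k$, a deviation upward at position $M+1$ violates $\si^{2k}(\al(\beta_m))\le\al(\beta_m)$ and a deviation downward violates $\si^{k}(\overline{\al(\beta_m)})<\al(\beta_m)$, using the $2k$-periodicity and $\overline{\th_j}=\th_{j+k}$).

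The genuine gap is in the periodic case of $\mathcal V\subseteq\overline{\mathbf U}$: the blocks $s^{(n)}=(t_1\cdots t_p)^n0$ are \emph{not} admissible in general, so Theorem \ref{th: DK-admissible} cannot be invoked. Definition \ref{def:admissible block} applied to $s^{(n)}$ at $i=np+1$ requires $\overline{s_1\cdots s_q}\le s_q s_1\cdots s_{q-1}=0\,(t_1\cdots t_p)^n$, whose first digit forces $\overline{t_1}\le 0$, i.e.\ $t_1=N-1$. This already fails for $N=4$ and $(\al_i(\beta))=2^\infty$ (i.e.\ $\beta=3$, which does lie in $\overline{\mathbf U}$): the block $2^n0$ has reflection $1^n3>02^n$. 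Even when $t_1=N-1$ the construction can fail at interior rotations: for $N=2$ the sequence $(110100)^\infty$ satisfies all the hypotheses of the proposition, yet $(110100)^n0$ violates the first admissibility condition at $i=6(n-1)+5$, since $t_5t_6\,0=000<001\cdots=\overline{t_1t_2t_3}\cdots$. The correct construction must take admissible blocks $u$ whose \emph{increment} $u^+$ is a prefix of $(t_1\cdots t_p)^\infty$ (e.g.\ $u=2^{n-1}1$ for the first example, $u=(1110)^n110$ for $(1110)^\infty$), which requires choosing the truncation point at a digit exceeding $\overline{t_1}$ and re-verifying admissibility using the full strength of the strict inequalities $\overline{\al_1\al_2\cdots}<\al_{k+1}\al_{k+2}\cdots$ --- essentially the argument of Proposition \ref{prop: maximal admissible interval-cover} run in reverse. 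Finally, note that your measure-zero step reduces everything to the Lebesgue-nullity of $\mathbf U$ itself, which is a nontrivial theorem rather than a consequence of Sidorov-type results about generic $x$; as the paper also treats this as external input, citing it is acceptable, but it is not ``classical'' in the sense of following from what you have written.
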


\begin{proposition}
  \label{prop: maximal admissible interval-cover}
  The union of all  admissible intervals covers   $(\beta_c(N), N)$ a.e..
\end{proposition}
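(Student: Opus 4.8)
The plan is to show that the complement of the union of all admissible intervals inside $(\beta_c(N),N)$ has zero Lebesgue measure, and the natural route is to compare this complement with the set $\overline{\mathbf{U}}$, which has zero Lebesgue measure by Proposition \ref{prop: unique expansion of 1}. First I would take a base $\beta\in(\beta_c(N),N)$ that lies in \emph{no} admissible interval, and analyze its quasi-greedy expansion $(\al_i)=(\al_i(\beta))$ of $1$. The idea is that if $\beta$ is not in $\overline{\mathbf{U}}$, then by Proposition \ref{prop: unique expansion of 1} there is some $k\ge 1$ with $\al_{k+1}\al_{k+2}\cdots=\al_1\al_2\cdots$; taking the least such $k$, the sequence $(\al_i)$ is periodic with period $t_1\cdots t_k:=\al_1\cdots\al_k$, so $(\al_i(\beta))=(t_1\cdots t_k)^\f$. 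I would then verify that $t_1\cdots t_k$ is an admissible block: the condition $\overline{t_1\cdots t_k}\le t_i\cdots t_k t_1\cdots t_{i-1}\le t_1\cdots t_k$ follows from Proposition \ref{prop:1} applied to the quasi-greedy expansion, and the remaining inequality $t_i\cdots t_k^+\,\overline{t_1\cdots t_{i-1}}\le t_1\cdots t_k^+$ together with the strictness $\overline{\al_1\al_2\cdots}<\al_{k+1}\al_{k+2}\cdots$ (which must hold for all $k$ not attaining equality, since $\beta>\beta_c(N)$ forces $\beta$ to sit above the Komornik-Loreti constant) should come out of the greedy/quasi-greedy lexicographic inequalities. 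Hence $\beta$ is the left endpoint $\beta_L$ of the admissible interval generated by $t_1\cdots t_k$, contradicting the assumption that $\beta$ lies in no admissible interval.

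So the only bases in $(\beta_c(N),N)$ missing from the union of all admissible intervals are those $\beta$ for which $(\al_i(\beta))$ is \emph{not} eventually equal to a shift of itself — precisely the $\beta$ satisfying the strict inequalities $\overline{\al_1\al_2\cdots}<\al_{k+1}\al_{k+2}\cdots\le\al_1\al_2\cdots$ for all $k\ge 0$, together with those $\beta$ for which the periodic block that appears fails to be admissible because of a boundary phenomenon at the Komornik-Loreti constant itself. The first class is exactly $\overline{\mathbf{U}}\cap(\beta_c(N),N)$ by Proposition \ref{prop: unique expansion of 1}, which has zero Lebesgue measure; the second class, if nonempty, is countable (it consists of isolated Parry points whose blocks violate admissibility, and these correspond to a discrete set of algebraic $\beta$). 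I would also need the easy containment that every admissible interval is in fact contained in $(\beta_c(N),N)$ — or at least that the part below $\beta_c(N)$ is irrelevant — which follows from Proposition \ref{prop:DK-2} giving $[\beta_L,\beta_U]\subseteq[G_N,N)$ and from the fact that $\dim_H\U{N}=0$ for $\beta\le\beta_c(N)$; more precisely one checks $\beta_L\ge\beta_c(N)$ whenever $t_1\cdots t_p$ is admissible, since otherwise $(t_1\cdots t_p)^\f$ would be lexicographically below the Thue-Morse-type sequence $(\la_i(N))=(\th_i(\lceil N/2\rceil))$ that defines $\beta_c(N)$, and one can argue admissibility of $t_1\cdots t_p$ is incompatible with this.

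The main obstacle I anticipate is the careful bookkeeping at the two boundary issues: (i) showing that the \emph{strictness} $\overline{t_i\cdots t_p t_1\cdots t_{i-1}}\neq t_1\cdots t_p^+$ required in Definition \ref{def:admissible block} actually holds for the periodic block arising from a non-$\overline{\mathbf{U}}$ base $\beta>\beta_c(N)$ — this is where the hypothesis $\beta>\beta_c(N)$ (rather than merely $\beta>G_N$) must be used, via the fact that equality there would force $(\al_i(\beta))$ to coincide with a truncation pattern of $(\la_i(N))$ and hence push $\beta$ down to or below $\beta_c(N)$; and (ii) handling the countably many exceptional Parry bases whose defining block is not admissible, and confirming they form a Lebesgue-null (indeed countable) set so they can be absorbed into the exceptional set. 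Once these are settled, the conclusion is immediate: $(\beta_c(N),N)\setminus\bigcup[\beta_L,\beta_U]$ is contained in the union of $\overline{\mathbf{U}}$ (Lebesgue-null) and a countable set, hence is Lebesgue-null, which is exactly the assertion that the admissible intervals cover $(\beta_c(N),N)$ a.e.
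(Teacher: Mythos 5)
There is a genuine gap, and it sits at the very first step. You negate the condition of Proposition \ref{prop: unique expansion of 1} incorrectly: you claim that $\beta\notin\overline{\mathbf{U}}$ forces some $k$ with $\al_{k+1}\al_{k+2}\cdots=\al_1\al_2\cdots$, i.e.\ that $(\al_i(\beta))$ is periodic. But the upper inequality $\al_{k+1}\al_{k+2}\cdots\le\al_1\al_2\cdots$ in that proposition is non-strict and is automatically satisfied by \emph{every} quasi-greedy expansion (Proposition \ref{prop:1}), so it can never be the condition that fails. The correct negation is that the \emph{lower} inequality fails: there exists $q$ with $\al_{q+1}\al_{q+2}\cdots\le\overline{\al_1\al_2\cdots}$. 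Your version cannot be right on cardinality grounds alone: $(\beta_c(N),N)\setminus\overline{\mathbf{U}}$ has full measure, hence is uncountable, while only countably many $\beta$ have periodic quasi-greedy expansions. Consequently the entire mechanism of your first paragraph — identifying $\beta$ with the left endpoint $\beta_L$ of an interval generated by the period block — collapses; almost every $\beta\notin\overline{\mathbf{U}}$ is not such a left endpoint, and your second paragraph's attempt to absorb the leftovers into a countable exceptional set cannot repair this.

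The paper's actual argument takes $m$ to be the \emph{least} index with $\al_{m+1}\al_{m+2}\cdots\le\overline{\al_1\al_2\cdots}$, forms the block $\al_1\cdots\al_m^-$ (note the decremented last digit — your proposal never produces this decrement), proves its admissibility from the minimality of $m$ together with $\al_{k+1}\al_{k+2}\cdots\le\al_1\al_2\cdots$, and then places $\beta$ somewhere in $[\beta_L,\beta_U]$ via the sandwich $(\al_1\cdots\al_m^-)^\f<\al_1\al_2\cdots<(\th_i(\al_1\cdots\al_m))$. A further minor misdirection in your write-up: you try to show $\beta_L\ge\beta_c(N)$ for every admissible block, but this is both unnecessary for the covering claim and false — for instance $[G_N,\beta_c(N)]$ is itself an admissible interval (Corollary \ref{cor:1}), with $\beta_L=G_N<\beta_c(N)$. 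What matters is only that each $\beta\in(\beta_c(N),N)\setminus\overline{\mathbf{U}}$ lands in \emph{some} admissible interval, regardless of where that interval's left endpoint sits.
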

\begin{proof}
By Proposition \ref{prop: unique expansion of 1} it suffices to show that  $(\beta_c(N),N)$  is  covered by $\overline{\mathbf{U}}$ and   the union of all admissible intervals. Take $\beta\in(\beta_c(N), N)$, and
  let $({\alpha}_i)=({\alpha}_i(\beta))$ be the quasi-greedy $\beta$-expansion of $1$. By Proposition \ref{prop:1} it gives
\begin{equation}\label{eq: sec4-2'}
  {\alpha}_{k+1}{\alpha}_{k+2}\cdots\le {\alpha}_1{\alpha}_2\cdots\quad\textrm{for any}~k\ge 0.
\end{equation}
  Suppose $\beta\notin\overline{\mathbf{U}}$. By Proposition \ref{prop: unique expansion of 1} it follows that there exists $q\ge 0$ such that
  \begin{equation}
    \label{eq: sec4-3}
    {\alpha}_{q+1}{\alpha}_{q+2}\cdots\le \overline{{\alpha}_1{\alpha}_2\cdots}.
  \end{equation}
Let $m$ be the least integer $q$ satisfying (\ref{eq: sec4-3}).  Since $\beta>\beta_c(N)$, by Proposition \ref{prop:1} we have $\al_1>\overline{\al_1}$. Then $m\ge 1$, and one can verify that $\al_m>0$. We will finish the proof by showing that $\beta$ is contained in the admissible interval $[\beta_L, \beta_U]$ generated by ${\alpha}_1\cdots{\alpha}_m^-$.

First we will show the admissibility of ${\alpha}_1\cdots {\alpha}_m^-$. Since $\beta>\beta_c(N)$, by Proposition \ref{prop:1} it follows that either $\al_1^-\ge \overline{\al_1^-}$ or  $\al_2\ge k=\al_1>\overline{\al_1}$ with $N=2k$. If $m=1$, then by the definition of $m$  it gives that $\al_1^-\ge\overline{\al_1^-}$. This yields the admissibility of $\al_1^-$.
In the following we will assume $m\ge 2$.  Since $m$ is the least integer satisfying (\ref{eq: sec4-3}),  it follows that
 \begin{equation*}
   \label{eq: sec4-5}
   {\alpha}_{i}\cdots{\alpha}_m\ge\overline{{\alpha}_1\cdots{\alpha}_{m-i+1}}\quad\textrm{for any}~1\le i\le m.
 \end{equation*}
 We claim that ${\alpha}_{i}\cdots {\alpha}_m>\overline{{\alpha}_1\cdots{\alpha}_{m-i+1}}$ for any $1\le i\le m$.

Suppose ${\alpha}_{i}\cdots{\alpha}_m=\overline{{\alpha}_1\cdots{\alpha}_{m-i+1}}$ for some $1\le i\le m$. Then by the minimality of $m$ and (\ref{eq: sec4-2'}) we have
\[
{\alpha}_{m+1}{\alpha}_{m+2}\cdots>\overline{{\alpha}_{m-i+2}{\alpha}_{m-i+3}\cdots}\ge\overline{{\alpha}_1{\alpha}_2\cdots},
\]
leading to a contradiction with (\ref{eq: sec4-3}).

So, $\alpha_i\cdots\alpha_m>\overline{\alpha_1\cdots\alpha_{m-i+1}}$ for any $1\le i\le m.$
This, together with (\ref{eq: sec4-2'}),
implies that
\[
\overline{\al_1\cdots \al_m^-}\le \al_i\cdots\al_m^-\al_1\cdots \al_{i-1}\quad\textrm{and}\quad\al_i\cdots\al_m \overline{\al_1\cdots\al_{i-1}}\le\al_1\cdots\al_m,
\] for any $1\le i\le m$.
By Definition \ref{def:admissible block} ${\alpha}_1\cdots{\alpha}_m^-$ is admissible.

Now we will show that $\beta\in[\beta_L, \beta_U]$ with $({\alpha}_i(\beta_L))=({\alpha}_1\cdots{\alpha}_m^-)^\f$ and $({\alpha}_i(\beta_U))=(\th_i({\alpha}_1\cdots{\alpha}_m))$. This can be verified by using Proposition \ref{prop:1} in the following equation.
\[
({\alpha}_1\cdots{\alpha}_m^-)^\f<{\alpha}_1{\alpha}_2\cdots<{\alpha}_1\cdots{\alpha}_m\,\overline{{\alpha}_1\cdots{\alpha}_m}\,^+\cdots=(\th_i({\alpha}_1\cdots{\alpha}_m)),
\]
where the second inequality follows by (\ref{eq: sec4-3}).
\end{proof}
  By Theorem \ref{th: main results} and the proof of Proposition \ref{prop: maximal admissible interval-cover} we are able to calculate the Hausdorff dimension of $\U{N}$ for any $\beta\in(\beta_c(N),N)\setminus\overline{\mathbf{U}}$.
\begin{corollary}\label{cor: 1}
 For $\beta\in (\beta_c(N),N)\setminus\overline{\mathbf{U}}$, let $({\alpha}_i)=({\alpha}_i(\beta))$  and let $m$ be the least integer satisfying
\[
  {\alpha}_{m+1}{\alpha}_{m+2}\cdots\le \overline{{\alpha}_1{\alpha}_2\cdots}.
  \]
  Then $\dim_H\U{N}= h(Z_{{\alpha}_1\cdots {\alpha}_m^-})/\log\beta$, where $h(Z_{{\alpha}_1\cdots{\alpha}_m^-})$ is the topological entropy of
\[
  Z_{{\alpha}_1\cdots{\alpha}_m^-}=\{(d_i): \overline{{\alpha}_1\cdots{\alpha}_m^-}\le d_n \cdots d_{n+m-1}\le {\alpha}_1\cdots{\alpha}_m^-, n\ge 1\}.
  \]
\end{corollary}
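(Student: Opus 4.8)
The plan is to deduce Corollary \ref{cor: 1} directly from Theorem \ref{th: main results}, using the admissible block and the admissible interval that were already constructed inside the proof of Proposition \ref{prop: maximal admissible interval-cover}; essentially nothing new has to be proved.

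First I would fix $\beta\in(\beta_c(N),N)\setminus\overline{\mathbf{U}}$ and write $(\alpha_i)=(\alpha_i(\beta))$ for its quasi-greedy expansion of $1$. Since $\beta\notin\overline{\mathbf{U}}$, Proposition \ref{prop: unique expansion of 1} guarantees the existence of some $q\ge 0$ with $\alpha_{q+1}\alpha_{q+2}\cdots\le\overline{\alpha_1\alpha_2\cdots}$, so the least such integer $m$ appearing in the statement is well defined. Moreover, since $\beta>\beta_c(N)$, Proposition \ref{prop:1} gives $\alpha_1>\overline{\alpha_1}$, and then the opening lines of the proof of Proposition \ref{prop: maximal admissible interval-cover} show that $m\ge 1$ and $\alpha_m>0$; in particular the block $\alpha_1\cdots\alpha_m^-$ is meaningful.

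Next I would invoke the two conclusions reached inside that same proof: (i) $\alpha_1\cdots\alpha_m^-$ is an admissible block in the sense of Definition \ref{def:admissible block}; and (ii) $\beta$ lies in the admissible interval $[\beta_L,\beta_U]$ generated by $\alpha_1\cdots\alpha_m^-$, meaning $(\alpha_i(\beta_L))=(\alpha_1\cdots\alpha_m^-)^\f$, $(\alpha_i(\beta_U))=(\theta_i(\alpha_1\cdots\alpha_m))$, and $\beta_L\le\beta\le\beta_U$ (the last by Proposition \ref{prop:1} applied to the chain $(\alpha_1\cdots\alpha_m^-)^\f<\alpha_1\alpha_2\cdots<(\theta_i(\alpha_1\cdots\alpha_m))$). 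Both statements were verified there, so they may be quoted verbatim.

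Finally I would apply Theorem \ref{th: main results} to the admissible interval $[\beta_L,\beta_U]$ with $t_1\cdots t_p:=\alpha_1\cdots\alpha_m^-$ (hence $p=m$) and to the point $\beta\in[\beta_L,\beta_U]$, obtaining
\[
\dim_H\U{N}=\frac{h(Z_{t_1\cdots t_p})}{\log\beta}=\frac{h(Z_{\alpha_1\cdots\alpha_m^-})}{\log\beta},
\]
where $Z_{\alpha_1\cdots\alpha_m^-}$ is precisely the subshift of finite type displayed in the corollary. The proof meets no real obstacle: all the substance sits in Theorem \ref{th: main results} and in the verifications already carried out for Proposition \ref{prop: maximal admissible interval-cover}; the only delicate point worth a sentence is the assertion $m\ge 1$ and $\alpha_m>0$, which is exactly where the strict inequality $\beta>\beta_c(N)$ is used.
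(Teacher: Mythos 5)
Your proposal is correct and is exactly the argument the paper intends: the corollary is stated immediately after the remark that it follows ``by Theorem \ref{th: main results} and the proof of Proposition \ref{prop: maximal admissible interval-cover},'' and you have simply made that deduction explicit by quoting the admissibility of $\alpha_1\cdots\alpha_m^-$ and the containment $\beta\in[\beta_L,\beta_U]$ established there, then invoking Theorem \ref{th: main results}. Nothing is missing, and your one flagged delicate point ($m\ge 1$ and $\alpha_m>0$, via $\beta>\beta_c(N)$) is the same one the paper handles in the proof of Proposition \ref{prop: maximal admissible interval-cover}.
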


In the following we will investigate the relationship between any two  admissible intervals. Let $[{\al_L},{\al_U}]$ and $[{{\beta}_L},{{\beta}_U}]$ be two admissible intervals generated by $s_1\cdots s_q$ and $t_1\cdots t_p$ respectively. Then by Definition \ref{def:admissible interval}
\[
(\alpha_i({\al_L}))=(s_1\cdots s_q)^\f,\quad(\alpha_i({\al_U}))=(\th_i(s_1\cdots s_q^+)),
\]
and
\[
(\alpha_i({{\beta}_L}))=(t_1\cdots t_p)^\f,\quad (\alpha_i({{\beta}_U}))=(\th_i(t_1\cdots t_p^+)).
\]
We will prove that $\al_L<{\beta}_U$ implies $\al_U\le{\beta}_U$. By Proposition \ref{prop:1} this is equivalent to showing
\begin{equation}\label{eq: sec5-1}
(s_1\cdots s_q)^\f<(\th_i(t_1\cdots t_p^+))\quad\Longrightarrow\quad
 (\th_i(s_1\cdots s_q^+))\le(\th_i(t_1\cdots t_p^+)).
\end{equation}
We will split the proof of (\ref{eq: sec5-1}) into the following two cases: Case I. $1\le q<p$ (see Lemma \ref{lem: sect6-1}); Case II. $q\ge p$ (see Lemma \ref{prop:6-1}).
 First we give the following lemma.
\begin{lemma}\label{lem: sect6-0}
  Let $t_1\cdots t_p$ be an admissible block. Then for any $q<p/2$ we have $\overline{t_1\cdots t_q}\,^+\le t_{q+1}\cdots t_{2q}$.
\end{lemma}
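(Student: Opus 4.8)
The claim is that if $t_1\cdots t_p$ is admissible and $q<p/2$, then $\overline{t_1\cdots t_q}\,^+\le t_{q+1}\cdots t_{2q}$. The plan is to extract this directly from the defining inequalities of an admissible block as recorded in Equation (\ref{eq: admissible block}), namely
\[
\overline{t_1\cdots t_p}\le t_i\cdots t_pt_1\cdots t_{i-1}<t_i\cdots t_p^+\,\overline{t_1\cdots t_{i-1}}\le{t_1\cdots t_p^+}
\]
for every $1\le i\le p$. First I would specialize the right-hand inequality $t_i\cdots t_p^+\,\overline{t_1\cdots t_{i-1}}\le t_1\cdots t_p^+$ to the index $i=q+1$, which is legitimate since $q<p/2\le p$ forces $1\le q+1\le p$. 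Comparing the two blocks $t_{q+1}\cdots t_p^+\,\overline{t_1\cdots t_{q}}$ and $t_1\cdots t_q\,t_{q+1}\cdots t_p^+$ of common length $p$, and using $2q<p$ so that the first $q$ positions of each are $t_{q+1}\cdots t_{2q}$ and $t_1\cdots t_q$ respectively, I would read off the comparison position by position.

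The key step is the case analysis on where the two length-$p$ blocks first differ. If they already differ within the first $q$ coordinates, then the inequality $t_{q+1}\cdots t_p^+\,\overline{t_1\cdots t_q}\le t_1\cdots t_p^+$ forces $t_{q+1}\cdots t_{2q}\le t_1\cdots t_q$ with strict inequality at the first point of disagreement, whereas I actually want a lower bound $\overline{t_1\cdots t_q}\,^+\le t_{q+1}\cdots t_{2q}$; so this branch must instead be handled via the left-hand inequality of (\ref{eq: admissible block}) with $i=q+1$, which gives $\overline{t_1\cdots t_p}\le t_{q+1}\cdots t_p\,t_1\cdots t_q$, and restricting to the first $q$ coordinates yields $\overline{t_1\cdots t_q}\le t_{q+1}\cdots t_{2q}$. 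To upgrade $\overline{t_1\cdots t_q}\le t_{q+1}\cdots t_{2q}$ to $\overline{t_1\cdots t_q}\,^+\le t_{q+1}\cdots t_{2q}$ I must rule out equality $t_{q+1}\cdots t_{2q}=\overline{t_1\cdots t_q}$: if that held, I would substitute back into both strands of (\ref{eq: admissible block}) at $i=q+1$ (or compare with $i=1$) and derive a contradiction with strictness of the middle inequality $t_i\cdots t_pt_1\cdots t_{i-1}<t_i\cdots t_p^+\,\overline{t_1\cdots t_{i-1}}$, analogously to the equality-elimination argument already carried out in the proof of Proposition \ref{prop:DK-1}.

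The main obstacle I anticipate is precisely this elimination of the equality case $t_{q+1}\cdots t_{2q}=\overline{t_1\cdots t_q}$: the bare inequalities of Definition \ref{def:admissible block} give $\overline{t_1\cdots t_q}\le t_{q+1}\cdots t_{2q}$ for free, but squeezing out the extra ``$+1$'' requires genuinely using that $t_1\cdots t_p$ (with $t_p<N-1$) is admissible rather than merely self-comparable, i.e. exploiting that the second family of inequalities in Definition \ref{def:admissible block} involves $t_1\cdots t_p^+$ and a strict comparison. I expect the cleanest route is: assume equality, then use $\overline{t_1\cdots t_q}=t_{q+1}\cdots t_{2q}$ together with the inequality at a suitable shifted index to produce a block comparison that simultaneously says $\le$ and $>$, contradicting consistency; the condition $q<p/2$ is what guarantees the index $q+1$ and the overlap $t_{q+1}\cdots t_{2q}$ both fit inside the window $1\le\cdot\le p$, so no boundary wraparound interferes. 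Once the equality case is excluded, the desired inequality $\overline{t_1\cdots t_q}\,^+\le t_{q+1}\cdots t_{2q}$ follows immediately, completing the proof.
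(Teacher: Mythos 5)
You have the easy half right: applying the first family of admissibility inequalities at $i=q+1$ and truncating to the first $q$ symbols does give $\overline{t_1\cdots t_q}\le t_{q+1}\cdots t_{2q}$, and you correctly identify that the whole content of the lemma is ruling out the equality $t_{q+1}\cdots t_{2q}=\overline{t_1\cdots t_q}$. But your mechanism for doing so does not work. You propose to substitute the equality back into (\ref{eq: admissible block}) at $i=q+1$ and contradict ``strictness of the middle inequality'' $t_i\cdots t_pt_1\cdots t_{i-1}<t_i\cdots t_p^+\,\overline{t_1\cdots t_{i-1}}$. That inequality is strict for a trivial reason --- the two sides already differ at position $p-i+1$, where one has $t_p$ and the other $t_p+1$ --- so it carries no information and cannot be the source of a contradiction. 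More fundamentally, the equality $t_{q+1}\cdots t_{2q}=\overline{t_1\cdots t_q}$ is \emph{not} locally contradictory: a single application of the admissibility conditions at $i=q+1$ (or a comparison with $i=1$) is consistent with it. Indeed, the generalized Thue--Morse sequence itself begins $w\,\overline{w}\cdots$ on dyadic scales, so nothing about the prefix alone is wrong; the obstruction is global.

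What the paper actually does is propagate the assumed equality through the entire block. Writing $p=m\cdot 2q+j$ with $m\ge1$ and $0<j\le 2q$, the equality $t_{q+1}\cdots t_{2q}=\overline{t_1\cdots t_q}$ together with the first family of inequalities at $i=q+1$ forces $t_{2q+1}\cdots t_{3q}=t_1\cdots t_q$, and iterating yields
$t_1\cdots t_p=(t_1\cdots t_q\,\overline{t_1\cdots t_q})^m\,t_1\cdots t_j$.
Only now does a contradiction appear, and it comes from the \emph{second} family of inequalities applied at the wrap-around index $i=p-j+1$: since $t_{p-j+1}\cdots t_p=t_1\cdots t_j$, the condition $t_{p-j+1}\cdots t_p^+\,\overline{t_1\cdots t_{p-j}}\le t_1\cdots t_p^+$ forces $t_1\cdots t_j^+\le t_1\cdots t_j$, which is absurd. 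So the missing ingredient in your plan is precisely this induction along the block plus the endgame at the tail index; without it the equality case is not excluded, and the analogy you draw with the equality-elimination in Proposition \ref{prop:DK-1} does not transfer, since that argument concerns infinite quasi-greedy sequences rather than a finite block with a wrap-around constraint.
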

\begin{proof}
 Suppose
$
\overline{t_1\cdots t_q}\,^+>t_{q+1}\cdots t_{2q}
$
  for some $q<p/2$. Write $p=m2q+j$ with $m\ge 1$ and $0<j\le 2q$. Since $t_1\cdots t_p$ is an admissible block, by (\ref{eq: admissible block}) it yields that $t_{q+1}\cdots t_{2q}\ge\overline{t_1\cdots t_q}$. So,
\[
 t_1\cdots t_{2q}=t_1\cdots t_q\,\overline{t_1\cdots t_q}.
 \]
 Again by (\ref{eq: admissible block}) it follows that
\[
 t_{q+1}\cdots t_{3q}=\overline{t_1\cdots t_q}\,t_{2q+1}\cdots t_{3q}\ge\overline{t_1\cdots t_{2q}}=\overline{t_1\cdots t_q}t_1\cdots t_q,
  \]
  and
  $t_{2q+1}\cdots t_{3q}\le t_1\cdots t_q$. This yields $t_{2q+1}\cdots t_{3q}=t_1\cdots t_q$.

  By iteration, one can show that
\[
  t_1\cdots t_p=t_1\cdots t_{m 2q+j}=(t_1\cdots t_q\,\overline{t_1\cdots t_q})^m t_1\cdots t_j=(t_1\cdots t_{2q})^m t_1\cdots t_j.
  \]
This is impossible since otherwise we have  by (\ref{eq: admissible block})  that
\[
  t_1\cdots t_j^+=t_{p-j+1}\cdots t_{p}^+\le t_1\cdots t_j.
  \]
\end{proof}

\begin{lemma}\label{lem: sect6-1}
Let $s_1\cdots s_{q}$ and $t_1\cdots t_p$ be two admissible blocks with $1\le q< p$. If  $(s_1\cdots s_q)^\f<(\th_i(t_1\cdots t_p^+))$, then
 $
 (\th_i(s_1\cdots s_q^+))\le(\th_i(t_1\cdots t_p^+)).
 $
\end{lemma}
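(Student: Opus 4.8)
The plan is to exploit the block-similar structure of the two generalized Thue-Morse sequences and reduce the claim to a statement about finitely many coordinates that is then propagated through the doubling construction. Write $(\sigma_i)=(\th_i(s_1\cdots s_q^+))$ and $(\theta_i)=(\th_i(t_1\cdots t_p^+))$. The hypothesis $(s_1\cdots s_q)^\f<(\theta_i)$ compares an eventually periodic sequence of period $q$ with $(\theta_i)$; since $(\theta_i)$ begins with $t_1\cdots t_p^+\,\overline{t_1\cdots t_p}\cdots$ and $q<p$, the first place of disagreement occurs at some index $\le p$, and there $(\theta_i)$ is strictly larger. First I would locate the least index $j_0$ with $(s_1\cdots s_q)^\f$ and $(\theta_i)$ differing, note $j_0\le p$, and record that $\sigma_{j_0}\le (s_1\cdots s_q)^\f_{j_0}< \theta_{j_0}$ while $\sigma_i=\theta_i$ for $i<j_0$ is NOT automatic — what is automatic is only that $(s_1\cdots s_q)^\f_i=\theta_i$ for $i<j_0$. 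So the real work is to climb from the agreement of $(s_1\cdots s_q)^\f$ with $(\theta_i)$ on a prefix to the desired inequality $(\sigma_i)\le(\theta_i)$.

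The key idea: show that on the prefix where $(s_1\cdots s_q)^\f$ agrees with $(\theta_i)$, the block $t_1\cdots t_p$ is forced to be (a prefix of) a power of $s_1\cdots s_q$ or of $s_1\cdots s_q^+\,\overline{s_1\cdots s_q}$-type data, using the admissibility inequalities (\ref{eq: admissible block}) for $t_1\cdots t_p$ together with Lemma \ref{lem: sect6-0}. Concretely, since $(s_1\cdots s_q)^\f$ and $(\theta_i)$ share a prefix of length $\ge q$, we get $t_1\cdots t_q = s_1\cdots s_q^+$ or $t_1\cdots t_q^+=s_1\cdots s_q^+$ (depending on whether $j_0>q$ or $j_0\le q$); in the first case the admissibility of $t_1\cdots t_p$ and Lemma \ref{lem: sect6-0} let me identify the next $q$ symbols of $t_1\cdots t_p$ with $\overline{s_1\cdots s_q}\,^+$, matching exactly the second block of $(\sigma_i)$, and then I induct: block by block of length $2^m q$, the generalized Thue-Morse construction for $(\sigma_i)$ mirrors the constraints on $(\theta_i)$, so either the two sequences agree up to the point where $(\theta_i)$ is forced strictly larger, or $(\sigma_i)$ terminates its forced agreement first with a strictly smaller symbol. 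In either case $(\sigma_i)\le(\theta_i)$. The second case $j_0\le q$ is easier: there $\sigma_{j_0}\le\theta_{j_0}-1<\theta_{j_0}$ already on a common prefix, giving the inequality immediately.

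The main obstacle I anticipate is the bookkeeping in the inductive step: one must show that the \emph{doubling-and-reflecting} recursion defining $(\sigma_i)$ cannot, at any dyadic stage $2^m q$, produce a symbol exceeding the corresponding symbol of $(\theta_i)$ \emph{while the two sequences still agree strictly before that stage}. This is where admissibility of $t_1\cdots t_p$ — in particular the inequalities $t_i\cdots t_p^+\,\overline{t_1\cdots t_{i-1}}\le t_1\cdots t_p^+$ — must be combined carefully with Lemma \ref{lem: keylem} applied to $(\theta_i)$ and with the self-reflecting structure. I would organize this as: (i) reduce to $j_0>q$; (ii) prove $t_1\cdots t_{2q}=s_1\cdots s_q^+\,\overline{s_1\cdots s_q}$ using Lemma \ref{lem: sect6-0}; (iii) set up an induction on $m$ showing $t_1\cdots t_{2^m q}$ equals the length-$2^m q$ prefix of $(\sigma_i)$ as long as $2^m q< j_0'$, where $j_0'$ is the first disagreement between $(\sigma_i)$ and $(\theta_i)$; (iv) at the disagreement, use the definitions to check $\sigma_{j_0'}<\theta_{j_0'}$. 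Once (iii)–(iv) are in place, $(\sigma_i)\le(\theta_i)$ follows, and hence $\al_U\le\beta_U$ by Proposition \ref{prop:1}, which is exactly the content of (\ref{eq: sec5-1}) in Case I.
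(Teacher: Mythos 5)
Your overall strategy --- compare prefixes, then propagate the comparison through the dyadic stages of the doubling construction using Lemma \ref{lem: sect6-0} and Lemma \ref{lem: keylem} --- is the same as the paper's, but your case analysis is scrambled in a way that leaves the actual hard case unproved. First, your claim that the first disagreement $j_0$ between $(s_1\cdots s_q)^\f$ and $(\theta_i)$ occurs at index $\le p$ does not follow from ``$(\theta_i)$ begins with $t_1\cdots t_p^+$ and $q<p$''. What is true, and what the argument needs, is the stronger fact $j_0\le q$: if $s_1\cdots s_q=\theta_1\cdots\theta_q$, then Theorem \ref{th: DK-admissible} gives $s_1\cdots s_q\le\theta_{q+1}\cdots\theta_{2q}\le\theta_1\cdots\theta_q=s_1\cdots s_q$, and iterating forces $(\theta_i)=(s_1\cdots s_q)^\f$, contradicting the hypothesis. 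This step is absent from your proposal, and without it your case $j_0>q$ is not vacuous; moreover your treatment of that case is internally inconsistent (there you would have $t_1\cdots t_q=s_1\cdots s_q$, yet you assert $t_1\cdots t_{2q}=s_1\cdots s_q^+\,\overline{s_1\cdots s_q}$).

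Second, and more seriously, you dismiss $j_0\le q$ as ``easier \ldots giving the inequality immediately''. That is false when $j_0=q$ and $s_q+1=\theta_q$ (equivalently $s_1\cdots s_q^+=t_1\cdots t_q$): then $\sigma_q=s_q+1=\theta_q$, so $(\sigma_i)$ and $(\theta_i)$ agree on their entire first $q$ symbols and no strict inequality is available on a common prefix. This is precisely the hard case, and it is the only place where the doubling propagation you sketch in steps (ii)--(iv) is actually needed: one writes $p=2^nq+j$, shows $\sigma_1\cdots\sigma_{2^mq}\le\theta_1\cdots\theta_{2^mq}$ at each dyadic stage below $p$ via Lemma \ref{lem: sect6-0} (stopping as soon as a strict inequality appears), and handles the last partial block with Lemma \ref{lem: keylem}. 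Note also that the conclusion at the final stage is only $\le$, not $<$ (the two generalized Thue--Morse sequences can coincide), so your step (iv) should not insist on strict inequality at the first disagreement of $(\sigma_i)$ and $(\theta_i)$. If you reattach your propagation argument to this case and supply the $j_0\le q$ reduction via Theorem \ref{th: DK-admissible}, you recover the paper's proof.
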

\begin{proof}
Suppose
\begin{equation}\label{eq: sec6-1}
  (s_1\cdots s_q)^\f<(\th_i(t_1\cdots t_p^+))=:(\eta_i).
\end{equation}
Then $s_1\cdots s_q\le \eta_1\cdots\eta_q$. We claim that $s_1\cdots s_q<\eta_1\cdots\eta_q$.

If $s_1\cdots s_q= \eta_1\cdots\eta_q$, then by (\ref{eq: sec6-1}) and Theorem \ref{th: DK-admissible} it follows that
\[
s_1\cdots s_q\le \eta_{q+1}\cdots\eta_{2q}\le\eta_{1}\cdots\eta_{q}=s_1\cdots s_q.
\]
This yields $\eta_1\cdots\eta_{2q}=(s_1\cdots s_q)^2$. By iteration, we have $(\eta_i)=(s_1\cdots s_q)^\f$, leading to a contradiction with (\ref{eq: sec6-1}).

So, $s_1\cdots s_q<\eta_1\cdots\eta_q$, i.e., $s_1\cdots s_q^+\le\eta_1\cdots\eta_q$. Set
$
(\xi_i):=(\th_i(s_1\cdots s_q^+)).
$
Clearly, if  $\xi_1\cdots\xi_q=s_1\cdots s_q^+<\eta_1\cdots \eta_q$, then $(\xi_i)<(\eta_i)$.
  Now we assume
\[
\xi_1\cdots\xi_q=\eta_1\cdots\eta_q\quad\textrm{and}\quad p=2^n q+j
\]
with $n\ge 0$ and $0< j\le 2^nq$. We will split the proof of $(\xi_i)\le (\eta_i)$ into the following two cases.

 Case I. $n=0$. Then $p=q+j$ for $0<j\le q$.  By Definition  \ref{def:generalized Thue-Morse sequence}  and Lemma \ref{lem: keylem} it follows that for $0<j<q$
\[
\xi_{q+1}\cdots \xi_{q+j}=\overline{\xi_1\cdots\xi_j}=\overline{\eta_1\cdots \eta_j}<\eta_{p-j+1}\cdots\eta_p=\eta_{q+1}\cdots\eta_{q+j},
\]
and for $j=q$,
\[
\xi_{q+1}\cdots\xi_{2q}=\overline{\xi_1\cdots\xi_q}\,^+=\overline{\eta_1\cdots\eta_q}\,^+\le\eta_{q+1}\cdots\eta_{2q}.
\]
Then by Definition \ref{def:generalized Thue-Morse sequence} we obtain  that $(\xi_i)\le(\eta_i)$.

Case II. $n\ge 1$. Then $q<p/2$. By Definition \ref{def:generalized Thue-Morse sequence} and Lemma \ref{lem: sect6-0} it follows that
\[
\xi_{1}\cdots\xi_{2q}=\xi_1\cdots\xi_q\overline{\xi_1\cdots\xi_q}\,^+=\eta_1\cdots \eta_q\overline{\eta_1\cdots \eta_q}\,^+\le \eta_{1}\cdots \eta_{2q}.
\]
If  $\xi_1\cdots \xi_{2q}<\eta_1\cdots\eta_{2q}$, then  $(\xi_i)<(\eta_i)$. Suppose $\xi_1\cdots \xi_{2q}=\eta_1\cdots\eta_{2q}$. Then by iteration we have
\[
\xi_1\cdots\xi_{2^nq}\le\eta_1\cdots\eta_{2^nq}.
\]
Clearly, if
$
\xi_1\cdots\xi_{2^nq}<\eta_1\cdots\eta_{2^nq},
$
then  $(\xi_i)<(\eta_i)$. Now suppose
 $\xi_1\cdots\xi_{2^nq}=\eta_1\cdots\eta_{2^nq}$.
In a similar way as in Case I, one can show by Definition \ref{def:generalized Thue-Morse sequence}  and Lemma \ref{lem: keylem}  that
$
\xi_{2^nq+1}\cdots\xi_{2^nq+j}<\eta_{2^nq+1}\cdots\eta_{2^nq+j}
$
if $0<j<2^n q$, and
$
\xi_{2^nq+1}\cdots\xi_{2^{n+1}q}\le\eta_{2^nq+1}\cdots\eta_{2^{n+1}q}
$
if $p=2^{n+1}q.$
Then $(\xi_i)\le(\eta_i)$.
\end{proof}

\begin{lemma}\label{prop:6-1}
Let $s_1\cdots s_q$ and $t_1\cdots t_p$ be two admissible blocks with $q\ge p$. If $(s_1\cdots s_q)^\f<(\th_i(t_1\cdots t_p^+))$, then
 $
 (\th_i(s_1\cdots s_q^+))\le(\th_i(t_1\cdots t_p^+)).
 $
\end{lemma}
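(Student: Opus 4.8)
Write $(\eta_i):=(\theta_i(t_1\cdots t_p^+))$ and $(\xi_i):=(\theta_i(s_1\cdots s_q^+))$, and let $\beta_U,\gamma_U\in(1,N]$ be the bases with $(\alpha_i(\beta_U))=(\eta_i)$ and $(\alpha_i(\gamma_U))=(\xi_i)$. By Proposition~\ref{prop:1} the desired conclusion $(\theta_i(s_1\cdots s_q^+))\le(\theta_i(t_1\cdots t_p^+))$ is exactly $\gamma_U\le\beta_U$, and by Theorem~\ref{th: DK-admissible} both $(\eta_i)$ and $(\xi_i)$ are the \emph{unique} expansions of $1$ in their respective bases. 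The plan is to run the argument of Lemma~\ref{lem: sect6-1}, but, since $q\ge p$ places the relevant coordinates of $(\eta_i)$ beyond the base block $t_1\cdots t_p^+$, to exploit the period‑doubling structure of $(\eta_i)$ itself rather than the combinatorics of the block $t_1\cdots t_p$.

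\emph{Step 1: the length‑$q$ prefixes.} I first claim $s_1\cdots s_q<\eta_1\cdots\eta_q$. We cannot have $s_1\cdots s_q>\eta_1\cdots\eta_q$, since then $(s_1\cdots s_q)^\infty>(\eta_i)$, against the hypothesis. If $s_1\cdots s_q=\eta_1\cdots\eta_q$, then combining the hypothesis $(s_1\cdots s_q)^\infty<(\eta_i)$ with $\eta_{q+1}\cdots\eta_{2q}\le\eta_1\cdots\eta_q$ (Lemma~\ref{lem: keylem}) gives $\eta_1\cdots\eta_{2q}=(s_1\cdots s_q)^2$, and iterating yields $(\eta_i)=(s_1\cdots s_q)^\infty$, contradicting the hypothesis; this is exactly the argument used in Lemma~\ref{lem: sect6-1}. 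Hence $s_1\cdots s_q<\eta_1\cdots\eta_q$, so $\xi_1\cdots\xi_q=s_1\cdots s_q^+\le\eta_1\cdots\eta_q$.

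\emph{Step 2: induction over the scales $2^kq$.} I prove $\xi_1\cdots\xi_{2^kq}\le\eta_1\cdots\eta_{2^kq}$ for every $k\ge0$; the case $k=0$ is Step~1. If this inequality is strict at some level $k$, then $(\xi_i)<(\eta_i)$ and we are done. Otherwise $\xi_1\cdots\xi_{2^kq}=\eta_1\cdots\eta_{2^kq}$, so by Definition~\ref{def:generalized Thue-Morse sequence}
\[
\xi_{2^kq+1}\cdots\xi_{2^{k+1}q}=\overline{\xi_1\cdots\xi_{2^kq}}\,^{+}=\overline{\eta_1\cdots\eta_{2^kq}}\,^{+},
\]
and it remains to show $\overline{\eta_1\cdots\eta_{2^kq}}\,^{+}\le\eta_{2^kq+1}\cdots\eta_{2^{k+1}q}$. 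Since $q\ge p$, one may pick $n$ with $2^np\ge2^{k+1}q$ and apply Lemma~\ref{lem: keylem} to $(\eta_i)$ at the index $i=2^kq+1$ to get $\overline{\eta_1\cdots\eta_{2^np-2^kq}}<\eta_{2^kq+1}\cdots\eta_{2^np}$; passing to the length‑$2^kq$ prefixes of both sides (legitimate since $2^np-2^kq\ge2^kq$) gives $\overline{\eta_1\cdots\eta_{2^kq}}\le\eta_{2^kq+1}\cdots\eta_{2^{k+1}q}$.

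\emph{Step 3: upgrading $\le$ to $<$, and conclusion.} Since incrementing the last digit of a block makes it strictly larger, the inequality of Step~2 yields the required $\overline{\eta_1\cdots\eta_{2^kq}}\,^{+}\le\eta_{2^kq+1}\cdots\eta_{2^{k+1}q}$ precisely once the equality $\overline{\eta_1\cdots\eta_{2^kq}}=\eta_{2^kq+1}\cdots\eta_{2^{k+1}q}$ is excluded; indeed, this equality together with $\xi_1\cdots\xi_{2^kq}=\eta_1\cdots\eta_{2^kq}$ would force $\xi_1\cdots\xi_{2^{k+1}q}>\eta_1\cdots\eta_{2^{k+1}q}$, the reverse of what we want, so this exclusion is the heart of the argument. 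I will rule it out as follows: such an equality makes $\eta_1\cdots\eta_{2^kq}\,\overline{\eta_1\cdots\eta_{2^kq}}$ a prefix of the generalized Thue–Morse sequence $(\eta_i)$ that does not sit at a period‑doubling length $2^mp$; feeding this back into Theorem~\ref{th: DK-admissible} for $(\eta_i)$ and iterating as in Step~1 drives $(\eta_i)$ toward being (eventually) periodic with period $2^{k+1}q$, so that $\beta_U$ would be a purely Parry number, hence algebraic, contradicting the transcendence of $\beta_U$ established in Theorem~\ref{th: main results-0}; the admissibility of $s_1\cdots s_q$ — equivalently Theorem~\ref{th: DK-admissible} for $(\xi_i)$, via $\xi_1\cdots\xi_q=s_1\cdots s_q^+$ — is what handles the residual case in which no strict inequality ever appears, forcing $(\xi_i)=(\eta_i)$ there. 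Granting this, the induction of Step~2 goes through, and finally either $\xi_1\cdots\xi_{2^kq}<\eta_1\cdots\eta_{2^kq}$ for some $k$, whence $(\xi_i)<(\eta_i)$, or equality holds for all $k$, whence $(\xi_i)=(\eta_i)$; in either case $(\xi_i)\le(\eta_i)$, i.e. $\gamma_U\le\beta_U$, which is the assertion. The main obstacle is exactly this exclusion step: controlling the reflected‑and‑incremented prefix $\overline{\eta_1\cdots\eta_{2^kq}}\,^{+}$ against $\eta_{2^kq+1}\cdots\eta_{2^{k+1}q}$ when $2^kq$ is not one of the period‑doubling lengths $2^mp$ of $(\eta_i)$, which is where the transcendence input (and the admissibility of $s_1\cdots s_q$) is genuinely needed.
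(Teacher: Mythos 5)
Your Step 1 is correct: it is essentially the paper's concluding contradiction argument (squeeze $\eta_{q+1}\cdots\eta_{2q}$ between $s_1\cdots s_q$ from below, via the hypothesis, and from above, via Theorem \ref{th: DK-admissible}, iterate to get $(\eta_i)=(s_1\cdots s_q)^\infty$), just moved to the front. The gap is in Steps 2--3. To run your induction you need the \emph{strict} inequality $\overline{\eta_1\cdots\eta_{2^kq}}<\eta_{2^kq+1}\cdots\eta_{2^{k+1}q}$, since only then does $\overline{\eta_1\cdots\eta_{2^kq}}\,^+\le\eta_{2^kq+1}\cdots\eta_{2^{k+1}q}$ follow. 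Lemma \ref{lem: keylem} gives a strict inequality only between blocks ending at a doubling length $2^np$ of $(\eta_i)$; truncating it to length $2^kq$ (which is generically \emph{not} of the form $2^mp$) only yields $\le$, because the first strict difference may occur beyond position $2^kq$. Your proposed exclusion of the equality case does not work: from $\eta_{2^kq+1}\cdots\eta_{2^{k+1}q}=\overline{\eta_1\cdots\eta_{2^kq}}$, Theorem \ref{th: DK-admissible} gives you only the one-sided bounds $\overline{\eta_1\cdots\eta_m}\le\eta_{2m+1}\cdots\eta_{3m}\le\eta_1\cdots\eta_m$ for $m=2^kq$, which is not a squeeze to equality, so nothing forces $(\eta_i)$ to become eventually periodic and the appeal to the transcendence of $\beta_U$ never gets off the ground. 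This is a missing idea, not a missing detail: the whole difficulty of the $q\ge p$ case is precisely that the doubling scales of $(\xi_i)$ and $(\eta_i)$ do not align, and your induction is organized around the wrong scales.

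The paper avoids this entirely by working at the scale $2^np$, where $q=2^np+j$ with $0\le j<2^np$ — a doubling length of $(\eta_i)$, where Lemma \ref{lem: keylem} does give strict control. It shows $s_1\cdots s_{2^np}\ne\eta_1\cdots\eta_{2^np}$: assuming equality, the hypothesis gives $s_{2^np+1}\cdots s_{2^np+j}\le\eta_{2^np+1}\cdots\eta_{2^np+j}=\overline{\eta_1\cdots\eta_j}$, while the admissibility of $s_1\cdots s_q$ gives $s_{2^np+1}\cdots s_{2^np+j}\ge\overline{s_1\cdots s_j}=\overline{\eta_1\cdots\eta_j}$; hence $s_1\cdots s_q=\eta_1\cdots\eta_q$, and then your Step 1 argument yields the contradiction $(\eta_i)=(s_1\cdots s_q)^\infty$. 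Thus $s_1\cdots s_{2^np}<\eta_1\cdots\eta_{2^np}$ and the conclusion follows immediately, with no need to compare the two sequences at any non-doubling scale of $(\eta_i)$. Note also that the admissibility of $s_1\cdots s_q$, which in your sketch appears only as a vague remark at the very end, is the essential input in this step.
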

\begin{proof}
Suppose
\begin{equation}\label{eq: sec6-2}
  (s_1\cdots s_q)^\f<(\th_i(t_1\cdots t_p^+))=(\eta_i)\quad\textrm{and}\quad q=2^n p+j
\end{equation}
with $n\ge 0$ and $0\le j<2^n p$.
Then
$
s_1\cdots s_{2^n p}\le \eta_1\cdots \eta_{2^np}.
$
If $s_1\cdots s_{2^n p}< \eta_1\cdots \eta_{2^np},$ then by Definition \ref{def:generalized Thue-Morse sequence}  it follows that
\[
(\th_i(s_1\cdots s_q^+))=s_1\cdots s_{2^n p}s_{2^n p+1}\cdots<(\eta_i)=(\th_i(t_1\cdots t_p^+)).
\]
We will finish the proof by showing that $s_1\cdots s_{2^n p}\ne\eta_1\cdots \eta_{2^n p}$.

Suppose $s_1\cdots s_{2^n p}=\eta_1\cdots \eta_{2^n p}$. We claim that
\begin{equation}\label{eq: sec6-5}
s_1\cdots s_q=\eta_1\cdots \eta_q.
\end{equation}
Clearly, if $j=0$, i.e., $q=2^n p$, then (\ref{eq: sec6-5}) holds. Now we assume $0<j<2^n p$.
By (\ref{eq: sec6-2}) and Definition \ref{def:generalized Thue-Morse sequence} it follows that
\[
s_{2^n p+1}\cdots s_{2^n p+j}\le\eta_{2^np+1}\cdots\eta_{2^np+j}=\overline{\eta_1\cdots\eta_j}.
\]
Also by the admissibility of $s_1\cdots s_q$ we have
\[
s_{2^n p+1}\cdots s_{2^n p+j}\ge \overline{s_1\cdots s_j}=\overline{\eta_1\cdots\eta_j}.
\]
Then $s_{2^n p+1}\cdots s_{2^n p+j}=\overline{\eta_1\cdots\eta_j}=\eta_{2^np+1}\cdots\eta_{2^n p+j}$ which yields
Equation (\ref{eq: sec6-5}).

By using Equation (\ref{eq: sec6-5}) in (\ref{eq: sec6-2}) it follows from Theorem \ref{th: DK-admissible} that
\[
s_1\cdots s_q\le \eta_{q+1}\cdots\eta_{2q}\le\eta_1\cdots\eta_q=s_1\cdots s_q.
\]
Then $\eta_{q+1}\cdots\eta_{2q}=s_1\cdots s_q$.
 By iteration, we have
 \[
 (\th_i(t_1\cdots t_p^+))=(\eta_i)=(s_1\cdots s_q)^\f,
 \]
  leading to a contradiction with (\ref{eq: sec6-2}).
\end{proof}
In the following we will prove that  ${\al_L}>{{\beta}_L}$ implies ${\al_U}\ge{{\beta}_U}$. By Proposition \ref{prop:1} this  is equivalent to showing
\begin{equation}\label{eq: sec5-2}
(s_1\cdots s_q)^\f>(t_1\cdots t_p)^\f\quad\Longrightarrow\quad
 (\th_i(s_1\cdots s_q^+))\ge(\th_i(t_1\cdots t_p^+)).
\end{equation}
 The proof of (\ref{eq: sec5-2}) will also be split into the following two cases: Case I. $1\le q<p$ (see Lemma \ref{lem: sect6-2}); Case II. $q\ge p$ (see Lemma \ref{prop:6-2}).
\begin{lemma}\label{lem: sect6-2}
Let $s_1\cdots s_q$ and $t_1\cdots t_p$ be two admissible blocks with $1\le q<p$. If $(s_1\cdots s_q)^\f>(t_1\cdots t_p)^\f$, then
 $
 (\th_i(s_1\cdots s_q^+))\ge(\th_i(t_1\cdots t_p^+)).
 $
\end{lemma}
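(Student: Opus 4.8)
The plan is to settle this by a direct lexicographic comparison of the first $q$ coordinates of the two generalized Thue-Morse sequences. Since $q<p$, the recursive ``reflect and increment'' step in Definition \ref{def:generalized Thue-Morse sequence} has not yet altered $(\th_i(t_1\cdots t_p^+))$ within its first $p>q$ entries, and the single increment already built into $s_1\cdots s_q^+$ will turn out to provide just enough slack.

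Concretely, I would write $(\xi_i):=(\th_i(s_1\cdots s_q^+))$ and $(\eta_i):=(\th_i(t_1\cdots t_p^+))$. First, straight from Definition \ref{def:generalized Thue-Morse sequence}, record that $\xi_1\cdots\xi_q=s_1\cdots s_q^+$, and that $\eta_1\cdots\eta_q=t_1\cdots t_q$, the latter because $q<p$ so the increment at coordinate $p$ lies beyond the initial block of length $q$. Next, extract from the hypothesis $(s_1\cdots s_q)^\f>(t_1\cdots t_p)^\f$ the inequality $s_1\cdots s_q\ge t_1\cdots t_q$: the left-hand sequence begins with $s_1\cdots s_q$ and the right-hand one begins with $t_1\cdots t_q$ (again since $q<p$), so were $s_1\cdots s_q<t_1\cdots t_q$ the first discrepancy would occur at or before position $q$ and would force $(s_1\cdots s_q)^\f<(t_1\cdots t_p)^\f$. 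Finally, since $s_1\cdots s_q$ is an admissible block we have $s_q<N-1$, so $s_1\cdots s_q^+=s_1\cdots s_{q-1}(s_q+1)$ is strictly larger than $s_1\cdots s_q$ in the lexicographic order; chaining the inequalities gives
\[
\xi_1\cdots\xi_q=s_1\cdots s_q^+>s_1\cdots s_q\ge t_1\cdots t_q=\eta_1\cdots\eta_q,
\]
hence $(\xi_i)>(\eta_i)$, and a fortiori $(\xi_i)\ge(\eta_i)$, which is the assertion (equivalently, $\al_U\ge\beta_U$ by Proposition \ref{prop:1}).

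I do not expect a genuine obstacle here, and that is worth flagging. The contrast with Lemma \ref{lem: sect6-1} is instructive: there the hypothesis only yielded the non-strict bound $s_1\cdots s_q\le t_1\cdots t_q$, so $s_1\cdots s_q^+$ could coincide with $\eta_1\cdots\eta_q$, which is why one was forced into the delicate induction over the dyadic decomposition $p=2^nq+j$; here the inequality points the favourable way and the increment in $s_1\cdots s_q^+$ already separates the two sequences inside the first period of length $q$. The one point that deserves an explicit word is the appeal to admissibility of $s_1\cdots s_q$ to guarantee $s_q<N-1$ (Definition \ref{def:admissible block}), which is exactly what makes $s_1\cdots s_q^+$ meaningful and strictly larger than $s_1\cdots s_q$.
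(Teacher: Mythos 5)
Your proof is correct, and it takes a genuinely different (and shorter) route than the paper. The paper writes $p=nq+j$ and compares length-$p$ prefixes: it shows $(s_1\cdots s_q)^n s_1\cdots s_j\ge t_1\cdots t_p$, then must rule out equality by invoking the admissibility of $t_1\cdots t_p$ (equality would force $t_{p-j+1}\cdots t_p=t_1\cdots t_j$, contradicting the condition $t_{p-j+1}\cdots t_p^+\le t_1\cdots t_j^+$), upgrades to $(s_1\cdots s_q)^n s_1\cdots s_j\ge t_1\cdots t_p^+$, and finally sandwiches $(\th_i(t_1\cdots t_p^+))$ below $(s_1\cdots s_q)^n s_1\cdots s_j(N-1)^\f<(\th_i(s_1\cdots s_q^+))$. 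You instead stop the comparison at length $q$: since $q<p$, the prefix $\eta_1\cdots\eta_q$ of $(\th_i(t_1\cdots t_p^+))$ is the unincremented $t_1\cdots t_q$, while $\xi_1\cdots\xi_q=s_1\cdots s_q^+$ already carries the increment, so the chain $s_1\cdots s_q^+>s_1\cdots s_q\ge t_1\cdots t_q$ settles the lexicographic comparison within the first $q$ coordinates. This avoids the periodic extension and, notably, never uses the admissibility of $t_1\cdots t_p$ (only $s_q<N-1$ from the admissibility of $s_1\cdots s_q$, which you correctly flag); the trade-off is purely stylistic, as both arguments yield the same strict inequality $(\th_i(s_1\cdots s_q^+))>(\th_i(t_1\cdots t_p^+))$. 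Your diagnosis of why this lemma is easier than Lemma \ref{lem: sect6-1} is also accurate: there the hypothesis gives the non-strict bound in the unfavourable direction, forcing the dyadic induction.
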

\begin{proof}
Suppose
  \begin{equation}\label{eq: sec6-3}
    (s_1\cdots s_q)^\f>(t_1\cdots t_p)^\f\quad\textrm{and}\quad p=n q+j
  \end{equation}
  with $n\ge 1$ and $1\le j\le q$.
  Then  $(s_1\cdots s_q)^ns_1\cdots s_j\ge t_1\cdots t_{nq+j}=t_1\cdots t_p$. If $(s_1\cdots s_q)^ns_1\cdots s_j=t_1\cdots t_p$, then
 \[
  t_{p-j+1}\cdots t_{p}=s_1\cdots s_{j}=t_1\cdots t_j,
  \]
  leading to a contradiction with the admissibility of $t_1\cdots t_p$.
  So, $(s_1\cdots s_q)^ns_1\cdots s_j> t_1\cdots t_p$, i.e., $(s_1\cdots s_q)^ns_1\cdots s_j\ge t_1\cdots t_p^+$. Then by Definition \ref{def:generalized Thue-Morse sequence} it follows that
  \[
  (\th_i(s_1\cdots s_q^+))>(s_1\cdots s_q)^n s_1\cdots s_j (N-1)^\f\ge (\th_i(t_1\cdots t_p^+)).
  \]
\end{proof}
When $(s_1\cdots s_q)^\f>(t_1\cdots t_p)^\f$ with $q\ge p$, it is more involved to prove $(\th_i(s_1\cdots s_q^+))\ge(\th_i(t_1\cdots t_p^+)).$ First we consider the following lemma.

\begin{lemma}\label{lem: sec6-3}
 Let $s_1\cdots s_q$ and $t_1\cdots t_p$ be two admissible blocks with $q\ge p$. If $s_1\cdots s_p>t_1\cdots t_p$, then
 $
 (\th_i(s_1\cdots s_q^+))\ge(\th_i(t_1\cdots t_p^+)).
 $
\end{lemma}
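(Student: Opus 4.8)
The plan is to compare the two generalized Thue-Morse sequences $(\xi_i) := (\th_i(s_1\cdots s_q^+))$ and $(\eta_i) := (\th_i(t_1\cdots t_p^+))$ block by block, using the common ``doubling'' structure of Definition \ref{def:generalized Thue-Morse sequence} together with Lemma \ref{lem: keylem}. Since $s_1\cdots s_p > t_1\cdots t_p$ (note $q\ge p$, so this is a comparison on the first $p$ digits), there is a least index $\ell$ with $1\le\ell\le p$ at which $s_\ell > t_\ell$ and $s_1\cdots s_{\ell-1} = t_1\cdots t_{\ell-1}$. The goal is to propagate the single strict inequality at position $\ell$ ``downward'' through every dyadic level so that at every index $i\ge 1$ one still has $\xi_i \ge \eta_i$ up to the first disagreement, which forces $(\xi_i) \ge (\eta_i)$ lexicographically. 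The key point is that complementation (the bar operation) reverses inequalities, so a strict inequality $\xi_j > \eta_j$ at some position $j$ inside a block $\th_1\cdots\th_{2^m r}$ produces, after reflection, a strict inequality in the opposite direction in the reflected block; one has to be careful that these reflected strict inequalities are always dominated by (i.e. come lexicographically later than) the leading strict inequality at position $\ell$.

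First I would set up the induction on the dyadic level. Let $r$ be whichever of $p,q$ is relevant at a given stage; the two sequences agree on $t_1\cdots t_{\ell-1} = s_1\cdots s_{\ell-1}$ and then $s_\ell > t_\ell$, so $\xi_1\cdots\xi_\ell > \eta_1\cdots\eta_\ell$ already, which by itself gives $(\xi_i) > (\eta_i)$ outright provided $\ell \le q$ — and since $\ell\le p\le q$ this is automatic. So in fact the proof should be short: the strict inequality $s_1\cdots s_p > t_1\cdots t_p$ immediately yields $\xi_1\cdots\xi_p > \eta_1\cdots\eta_p$ because both sequences begin with their own generating block ($\xi_1\cdots\xi_q = s_1\cdots s_q^+$ starts with $s_1\cdots s_p$ when $q\ge p$, and $\eta_1\cdots\eta_p = t_1\cdots t_p^+$). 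Wait — one must handle the ``$+$'': $\xi_1\cdots\xi_q = s_1\cdots s_q^+$ and $\eta_1\cdots\eta_p = t_1\cdots t_p^+$, so on the first $p$ digits we are comparing $s_1\cdots s_q^+$ restricted to $p$ places against $t_1\cdots t_p^+$. If $s_1\cdots s_p$ and $t_1\cdots t_p$ already differ before position $p$, the ``$+$'' on the last digit is irrelevant and $\xi_1\cdots\xi_p > \eta_1\cdots\eta_p$ follows. The delicate case is when $s_1\cdots s_{p-1} = t_1\cdots t_{p-1}$ and $s_p = t_p + 1$ with $q > p$: then $\xi_p = s_p$ (no increment, since $p < q$) while $\eta_p = t_p + 1 = s_p$, so $\xi_1\cdots\xi_p = \eta_1\cdots\eta_p$ and the strict inequality has been ``absorbed'' by the increment. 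This is the genuine obstacle.

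To resolve that case I would pass to the next dyadic block. We have $\xi_1\cdots\xi_{2p} = s_1\cdots s_{2p}$ (since $2p\le q$ or we truncate appropriately) and $\eta_1\cdots\eta_{2p} = t_1\cdots t_p^+\,\overline{t_1\cdots t_p}$, i.e.\ $\eta_{p+1}\cdots\eta_{2p} = \overline{t_1\cdots t_p} = \overline{s_1\cdots s_{p-1}s_p^-}$. By the admissibility of $s_1\cdots s_q$ (Definition \ref{def:admissible block}, applied via Equation \eqref{eq: admissible block} with the appropriate shift), we have $s_{p+1}\cdots s_{2p} \ge \overline{s_1\cdots s_p}$, and since $\overline{s_1\cdots s_p} = \overline{t_1\cdots t_{p-1}t_p^+} > \overline{t_1\cdots t_p}$ (because reflection reverses the strict inequality $s_p > t_p$), we get $\xi_{p+1}\cdots\xi_{2p} \ge \overline{s_1\cdots s_p} > \overline{t_1\cdots t_p} = \eta_{p+1}\cdots\eta_{2p}$, hence $(\xi_i) > (\eta_i)$. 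If $q$ is too small for a full second block to be available one shortens this comparison to the first $\min(q,2p)$ digits; since $q\ge p$, either $q\ge 2p$ and the above works verbatim on the block of length $2p$, or $p\le q < 2p$ and one compares on positions $p+1,\dots,q$ using the same admissibility inequality for the prefix of $s$ of length $q-p$.

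Thus the proof structure is: (1) reduce to the first disagreement of $s_1\cdots s_p$ and $t_1\cdots t_p$; (2) if that disagreement occurs strictly before position $p$, conclude immediately; (3) if it occurs exactly at position $p$ (so $s_p = t_p+1$ and $q>p$, forcing $\xi_1\cdots\xi_p = \eta_1\cdots\eta_p$), compare the reflected blocks on positions $p+1,\dots,\min(q,2p)$ and use the admissibility of $s_1\cdots s_q$ to show $\xi_{p+1}\cdots > \eta_{p+1}\cdots$. The main obstacle is case (3): getting the bookkeeping right on how the increment $(\cdot)^+$ interacts with reflection across the dyadic boundary, and making sure the admissibility inequality $s_{p+1}\cdots s_{p+k}\ge\overline{s_1\cdots s_k}$ is available in exactly the range of indices needed. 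Everything else is a direct appeal to Proposition \ref{prop:1}, Definition \ref{def:generalized Thue-Morse sequence}, and Lemma \ref{lem: keylem}.
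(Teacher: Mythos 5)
Your reduction to the case $s_1\cdots s_p=t_1\cdots t_p^+$ with $q>p$ matches the paper's, but the key step of your case (3) contains a sign error that breaks the argument. You write $\overline{s_1\cdots s_p}=\overline{t_1\cdots t_{p-1}t_p^+}>\overline{t_1\cdots t_p}$ ``because reflection reverses the strict inequality.'' Reflection does reverse it, but that gives the opposite conclusion: since $s_p=t_p+1$ we have $\overline{s_p}=\overline{t_p}-1$, hence
\[
\overline{s_1\cdots s_p}=\overline{t_1\cdots t_p}\,^-<\overline{t_1\cdots t_p}=\eta_{p+1}\cdots\eta_{2p}.
\]
So the admissibility bound $s_{p+1}\cdots s_{2p}\ge\overline{s_1\cdots s_p}$ is exactly one increment too weak to give $\xi_{p+1}\cdots\xi_{2p}\ge\eta_{p+1}\cdots\eta_{2p}$, and your chain of inequalities collapses at its middle link. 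This missing increment is precisely what the paper's proof is organized around: when $p\le q<2p$ it comes from the ``$+$'' on $s_q$ in $s_1\cdots s_q^+$; when $q=2p$ it comes from the ``$+$'' on $s_{2p}$; and when $q>2p$ one must prove that the admissibility inequality is in fact \emph{strict}, i.e.\ $s_{p+1}\cdots s_{2p}>\overline{s_1\cdots s_p}$, which the paper does by showing that equality would force $s_1\cdots s_q=(s_1\cdots s_p\,\overline{s_1\cdots s_p})^k s_1\cdots s_\ell$, contradicting the admissibility of $s_1\cdots s_q$. None of this appears in your proposal.

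A second, smaller gap: even after the inequality is repaired one only obtains $s_1\cdots s_{2p}\ge(\th_i(t_1\cdots t_p^+))_{i=1}^{2p}$ with possible equality, so the comparison does not terminate at the second block. The paper continues by induction through the dyadic blocks of length $2^n p$ up to $q=2^np+j$ and only then invokes the increment at position $q$ (the Case I argument) to finish. Your plan stops at $\min(q,2p)$ and does not address how equality persisting past $2p$ is handled.
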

\begin{proof}
  Write $q=2^n p+j$ with $n\ge 0$ and $0\le j<2^n p$. Suppose $s_1\cdots s_p>t_1\cdots t_p$, i.e., \[
  s_1\cdots s_p\ge t_1\cdots t_p^+=(\th_i(t_1\cdots t_p^+))_{i=1}^p.
  \]
 Clearly, if $s_1\cdots s_p>t_1\cdots t_p^+$, then by Definition \ref{def:generalized Thue-Morse sequence} it yields $ (\th_i(s_1\cdots s_q^+))\ge(\th_i(t_1\cdots t_p^+))$.  Now we assume $s_1\cdots s_p=t_1\cdots t_p^+$, and split the proof   into the following three cases.

Case I. $p\le q< 2p$.  Then by the admissibility of $s_1\cdots s_q=s_1\cdots s_{p+j}$ it follows that
\begin{eqnarray*}
 (\th_i(s_1\cdots s_q^+))&=&s_1\cdots s_p s_{p+1}\cdots s_{p+j}^+\cdots\\
&>& s_1\cdots s_p\overline{s_1\cdots s_j}(N-1)^\f\\
&=&t_1\cdots t_p^+\overline{t_1\cdots t_j}(N-1)^\f\ge(\th_i(t_1\cdots t_p^+)).
 \end{eqnarray*}

 Case II. $q=2p$. Again by the admissibility of $s_1\cdots s_q$ we have
 \[
 s_1\cdots s_q^+=s_1\cdots s_{2p}^+\ge s_1\cdots s_p\overline{s_1\cdots s_p}^+=t_1\cdots t_p^+\overline{t_1\cdots t_p}.
 \]
 This implies that $(\th_i(s_1\cdots s_q^+))\ge(\th_i(t_1\cdots t_p^+\overline{t_1\cdots t_p}))=(\th_i(t_1\cdots t_p^+)).$

 Case III. $q>2p$. Then by the admissibility of $s_1\cdots s_q$ it follows that
 \begin{equation}\label{eq: sec6-6}
 s_{p+1}\cdots s_{2p}\ge \overline{s_1\cdots s_p}=\overline{t_1\cdots t_p^+}.
 \end{equation}
 We claim that the inequality in (\ref{eq: sec6-6}) is strict. Otherwise,
 by the admissibility of $s_1\cdots s_q$ we have
\[
   \overline{s_1\cdots s_p}s_{2p+1}\cdots s_{3p}=s_{p+1}\cdots s_{3p}\ge\overline{s_1\cdots s_{2p}}=\overline{s_1\cdots s_p}s_1\cdots s_p,
   \]
   and $   s_{2p+1}\cdots s_{3p}\le s_1\cdots s_p.$
 This implies that $s_{2p+1}\cdots s_{3p}=s_1\cdots s_p$.
 By iteration, we have
  for $q=2kp+\ell$ with $0<\ell\le 2p$,
 \[
 s_1\cdots s_q=(s_1\cdots s_{p}\overline{s_1\cdots s_p})^k s_1\cdots s_\ell.
 \]
 Then,
$
 s_{q-\ell+1}\cdots s_q=s_1\cdots s_\ell,
 $
 leading to a contradiction with the admissibility of $s_1\cdots s_q$. So, the inequality in (\ref{eq: sec6-6}) is strict, i.e.,
 \[
  s_1\cdots s_{2p}\ge s_1\cdots s_p \overline{s_1\cdots s_p}\,^+= t_1\cdots t_p^+ \overline{t_1\cdots t_p}=(\th_i(t_1\cdots t_p^+))_{i=1}^{2p}.
 \]

  Then, by induction,  it follows that  $s_1\cdots s_{2^n p}\ge (\th_i(t_1\cdots t_p^+))_{i=1}^{2^n p}$. Again by the same argument as in Case I we can show that
 $  (\th_i(s_1\cdots s_q^+))\ge(\th_i(t_1\cdots t_p^+))$.

\end{proof}

\begin{lemma}\label{prop:6-2}
Let $s_1\cdots s_q$ and $t_1\cdots t_p$ be two admissible blocks with $q\ge p$. If $(s_1\cdots s_q)^\f>(t_1\cdots t_p)^\f$, then
 $
 (\th_i(s_1\cdots s_q^+))\ge(\th_i(t_1\cdots t_p^+)).
 $
\end{lemma}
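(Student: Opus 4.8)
The plan is to reduce Lemma~\ref{prop:6-2} to Lemma~\ref{lem: sec6-3}, which already settles the subcase $s_1\cdots s_p>t_1\cdots t_p$. Since $(s_1\cdots s_q)^\f$ begins with $s_1\cdots s_p$ while $(t_1\cdots t_p)^\f$ begins with $t_1\cdots t_p$, the hypothesis $(s_1\cdots s_q)^\f>(t_1\cdots t_p)^\f$ immediately rules out $s_1\cdots s_p<t_1\cdots t_p$. Hence it suffices to show that $s_1\cdots s_p=t_1\cdots t_p$ is impossible; once this is done we have $s_1\cdots s_p>t_1\cdots t_p$ and Lemma~\ref{lem: sec6-3} finishes the proof. (If $q=p$, then $s_1\cdots s_p=t_1\cdots t_p$ would force $(s_1\cdots s_q)^\f=(t_1\cdots t_p)^\f$, so we may assume $q>p$ in what follows.)

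So suppose, for contradiction, that $s_1\cdots s_p=t_1\cdots t_p$ with $q>p$, both blocks admissible, and $(s_1\cdots s_q)^\f>(t_1\cdots t_p)^\f$. I would compare $s_1\cdots s_q$ with the length-$q$ prefix of the periodic sequence $(t_1\cdots t_p)^\f$; the two agree on the first $p$ symbols. If $s_1\cdots s_q$ is exactly that periodic prefix, then $s_{p+1}\cdots s_q=s_1\cdots s_{q-p}$, and applying the second admissibility inequality of Definition~\ref{def:admissible block} to $s_1\cdots s_q$ with $i=p+1$, namely $s_i\cdots s_q^{+}\,\overline{s_1\cdots s_{i-1}}\le s_1\cdots s_q^{+}$, the left-hand side equals $s_1\cdots s_{q-p}^{+}\,\overline{s_1\cdots s_p}$, whose first $q-p$ symbols $s_1\cdots s_{q-p}^{+}$ already strictly exceed the first $q-p$ symbols $s_1\cdots s_{q-p}$ of the right-hand side, a contradiction. (The operation $^{+}$ is legitimate since $s_{q-p}=s_q<N-1$ by admissibility of $s_1\cdots s_q$.)

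Otherwise $s_1\cdots s_q$ is not a periodic prefix, so there is a least index $k$ with $p<k\le q$ and $s_k\ne\big((t_1\cdots t_p)^\f\big)_k$. If $s_k<\big((t_1\cdots t_p)^\f\big)_k$, then $(s_1\cdots s_q)^\f$ and $(t_1\cdots t_p)^\f$ agree on the first $k-1$ symbols while the former is smaller at position $k$, giving $(s_1\cdots s_q)^\f<(t_1\cdots t_p)^\f$, contrary to hypothesis. If $s_k>\big((t_1\cdots t_p)^\f\big)_k$, write $k-1=ap+b$ with $a\ge 1$ and $0\le b<p$; then minimality of $k$ gives $s_{ap+1}\cdots s_{ap+b}=t_1\cdots t_b$ while $s_k\ge t_{b+1}+1$, so applying the same admissibility inequality with $i=ap+1$ shows its left-hand side begins with $t_1\cdots t_b(t_{b+1}+1)$ whereas its right-hand side begins with $t_1\cdots t_b t_{b+1}$, which is again impossible. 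Thus $s_1\cdots s_p=t_1\cdots t_p$ cannot hold, so $s_1\cdots s_p>t_1\cdots t_p$ and Lemma~\ref{lem: sec6-3} applies, completing the proof.

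The arithmetic is routine; the step I expect to need the most care is checking, in each case, that the tails $s_i\cdots s_q$ really do begin with the claimed cyclic segments of $t_1\cdots t_p$ (this is exactly where the minimality of $k$ and the periodic-prefix assumption enter), and that every $^{+}$ and reflection written down is well-defined, which comes down to the condition $s_q<N-1$ built into the notion of an admissible block. With these checks in hand, Lemma~\ref{prop:6-2} follows, and together with Lemma~\ref{lem: sect6-2} (the case $q<p$) it yields implication~(\ref{eq: sec5-2}), hence the ``disjoint or equal right endpoints'' statement of Theorem~\ref{th: main results-1}.
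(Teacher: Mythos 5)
Your proof is correct and follows essentially the same route as the paper's: both reduce to Lemma \ref{lem: sec6-3} by ruling out $s_1\cdots s_p=t_1\cdots t_p$, and both derive the contradiction from the observation that equality would force $s_1\cdots s_q$ to track the periodic sequence $(t_1\cdots t_p)^\f$, which is incompatible with the admissibility of $s_1\cdots s_q$ (and, in the exactly-periodic case, with the strict hypothesis). The only cosmetic difference is that the paper iterates block-by-block to get $s_1\cdots s_{np}=(t_1\cdots t_p)^n$ and then treats $j=0$ and $j>0$ separately, whereas you locate the first deviation from the periodic prefix directly.
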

\begin{proof}
 Let $q=np+j$ with $n\ge 1$ and $0\le j<p$.
Suppose
\begin{equation}\label{eq: sec6-4}
(s_1\cdots s_q)^\f>(t_1\cdots t_p)^\f.
\end{equation}Then
$s_1\cdots s_p\ge t_1\cdots t_p$. By Lemma \ref{lem: sec6-3} it suffices to show that $s_1\cdots s_p\ne t_1\cdots t_p$.

Suppose $s_1\cdots s_p=t_1\cdots t_p$. Then by (\ref{eq: sec6-4}) and the admissibility of $s_1\cdots s_q$ it gives that
\[
s_1\cdots s_p\ge s_{p+1}\cdots s_{2p}\ge t_1\cdots t_p=s_1\cdots s_p.
\]
Then $s_1\cdots s_{2p}=(t_1\cdots t_p)^2$. By iteration, we have
\begin{equation}\label{eq: sec6-4'}
s_1\cdots s_{np}=(t_1\cdots t_p)^n.
\end{equation}
If $j=0$, i.e., $q=np$, then (\ref{eq: sec6-4'}) violates   (\ref{eq: sec6-4}). If $0<j<p$, then (\ref{eq: sec6-4'}) also leads to a contradiction, since by (\ref{eq: sec6-4}) and the admissibility of
$s_1\cdots s_q$ it follows that
\[
s_1\cdots s_j>s_{np+1}\cdots s_{np+j}\ge t_1\cdots t_j=s_1\cdots s_j.
\]
\end{proof}

\begin{proof}[Proof of Theorem \ref{th: main results-1}]
By Proposition \ref{prop: maximal admissible interval-cover} it suffices to show that either $[\al_L,\al_U]\cap[\beta_L,\beta_U]=\emptyset$ or $\al_U=\beta_U$.
By symmetry it suffices to show that ${\al_L}\in[{{\beta}_L},{{\beta}_U}]$ implies $\al_U={\beta}_U$. This can be  verified by the following observations.
  By Lemmas \ref{lem: sect6-1},  \ref{prop:6-1} and Proposition \ref{prop:1} it follows that
\[
  {\al_L}<{{\beta}_U}\quad\Longrightarrow\quad{\al_U}\le{{\beta}_U}.
  \]
  Moreover, by Lemmas \ref{lem: sect6-2}, \ref{prop:6-2} and Proposition \ref{prop:1} it follows that
  \[
  {\al_L}\ge{{\beta}_L}\quad\Longrightarrow\quad{\al_U}\ge{{\beta}_U}.
  \]
\end{proof}

\section{Proof of Theorem \ref{th: main results}}\label{sec: proof of th3}
Let
$[{{\beta}_L}, {{\beta}_U}]\subseteq[G_N, N)$  be an admissible interval generated by $t_1\cdots t_p$, i.e.,
\[
(\alpha_i({{\beta}_L}))=(t_1 \cdots t_p)^\f\quad\textrm{ and}\quad (\alpha_i({{\beta}_U}))=(\theta_i(t_1\cdots t_p^+)).
\]
By using Lemma \ref{lem: keylem} one can easily get the following lemma.
\begin{lemma}\label{lem: proof-1}
  Let $t_1\cdots t_p$ be an admissible block and  let $(\theta_i)=(\theta_i(t_1\cdots t_p^+))$. Then
  for  any $n\ge 0$,
\[
  \si^i((\theta_1\cdots \theta_{2^n p}\,\overline{\theta_1\cdots \theta_{2^n p}})^\f)\le (\theta_1\cdots \theta_{2^n p}\,\overline{\theta_1\cdots \theta_{2^n p}})^\f
  \]
  for any $i\ge 1$,   where $\si$ is the left shift  such that $\si((a_i))=(a_{i+1})$.
\end{lemma}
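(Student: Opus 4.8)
The plan is to verify directly, writing $B:=\theta_1\cdots\theta_{2^np}$ and $L:=2^np$, that $\si^i((B\overline B)^\f)\le(B\overline B)^\f$ for every $i\ge1$, with all of the work done by the finite-block inequalities of Lemma \ref{lem: keylem}. Since $(B\overline B)^\f$ has period $2L$, for a given $i$ we may replace $i$ by its residue $r\in\{0,1,\dots,2L-1\}$ modulo $2L$, because $\si^i((B\overline B)^\f)=\si^{r}((B\overline B)^\f)$; the case $r=0$ is an equality, so it remains to treat $1\le r\le L$ and $L<r\le 2L-1$ separately. I would also flag at the outset that $B\overline B$ is \emph{not} a prefix of the generalized Thue--Morse sequence $(\theta_i)$ --- Definition \ref{def:generalized Thue-Morse sequence} produces $\theta_1\cdots\theta_{2^{n+1}p}$ equal to $B\overline B$ with its last digit increased by one --- so one cannot simply invoke the quasi-greedy character of $(\theta_i)$, and the block estimates of Lemma \ref{lem: keylem} are exactly the tool that is available.

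For $1\le r\le L$ one has $\si^{r}((B\overline B)^\f)=\theta_{r+1}\cdots\theta_L\,\overline{\theta_1\cdots\theta_L}\,\theta_1\cdots$, to be compared with $(B\overline B)^\f=\theta_1\cdots\theta_L\,\overline{\theta_1\cdots\theta_L}\cdots$. Lemma \ref{lem: keylem} gives $\theta_{r+1}\cdots\theta_L\le\theta_1\cdots\theta_{L-r}$; if this is strict the comparison is settled, and if it is an equality the decisive coordinates are $L-r+1,\dots,L$, where the shifted sequence carries $\overline{\theta_1\cdots\theta_r}$ while $(B\overline B)^\f$ carries $\theta_{L-r+1}\cdots\theta_L$. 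Lemma \ref{lem: keylem} applied with index $L-r+1$ gives exactly $\overline{\theta_1\cdots\theta_r}<\theta_{L-r+1}\cdots\theta_L$, so $\si^{r}((B\overline B)^\f)<(B\overline B)^\f$, the verdict being reached within the first $L$ coordinates. (The endpoint $r=L$ is just the instance $\overline{\theta_1\cdots\theta_L}<\theta_1\cdots\theta_L$ of Lemma \ref{lem: keylem}.)

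For $L<r\le 2L-1$, write $r=L+j$ with $1\le j\le L-1$. Then $\si^{r}((B\overline B)^\f)=\overline{\theta_{j+1}\cdots\theta_L}\,\theta_1\cdots\theta_L\cdots$, to be compared with $\theta_1\cdots\theta_L\cdots$. Lemma \ref{lem: keylem} with index $j+1$ gives $\overline{\theta_1\cdots\theta_{L-j}}<\theta_{j+1}\cdots\theta_L$, and applying the digitwise reflection $c\mapsto\overline c$ --- which \emph{reverses} the lexicographic order on blocks of a fixed length --- this becomes $\overline{\theta_{j+1}\cdots\theta_L}<\theta_1\cdots\theta_{L-j}$, which already settles the comparison in the first $L-j$ coordinates. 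Together with the reduction $\si^i=\si^{r}$ this proves the lemma for all $i\ge1$.

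The argument is routine once Lemma \ref{lem: keylem} is available; the two places asking for care are the equality sub-case of the first range, where one must move on to the next block of length $r$ and apply Lemma \ref{lem: keylem} a second time with a shifted index, and the reflection step of the second range, where the reflection must be applied to the inequality supplied by Lemma \ref{lem: keylem} rather than to the inequality one wants, since reflection is order-reversing. I do not expect a genuine obstacle.
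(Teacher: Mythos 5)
Your proof is correct and follows exactly the route the paper intends: the paper omits the proof, remarking only that the lemma follows "by using Lemma \ref{lem: keylem}," and your case analysis (reduction modulo $2L$, the two ranges of the shift, and the second application of Lemma \ref{lem: keylem} in the equality sub-case, plus the order-reversing reflection step) is a correct and complete filling-in of that claim.
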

 By Lemma \ref{lem: proof-1} and Proposition \ref{prop:1} it follows that $(\th_1\cdots\th_{2^{n-1} p}\,\overline{\th_1\cdots \th_{2^{n-1} p}})^\f$ is the quasi-greedy expansion of $1$ for some base ${{\beta}}_n\in(1,N]$, i.e.,
 \begin{equation}\label{eq: beta_n}
 (\alpha_i({{\beta}}_n))=(\th_1\cdots\th_{2^{n-1} p}\,\overline{\th_1\cdots \th_{2^{n-1} p}})^\f.
\end{equation}
 Clearly, $( \alpha_i({{\beta}}_1))=(\th_1\cdots\th_p\,\overline{\th_1\cdots \th_p})^\f=(t_1\cdots t_p^+\,\overline{t_1\cdots t_p^+})^\f$, and the first $2^{n-1}p$ elements of $(\alpha_i({{\beta}_{n}}))$ coincide with that of the generalized Thue-Morse sequence  $(\theta_i(t_1\cdots t_p^+))$. So, as $n\ra \f$ the sequence $(\alpha_i({{\beta}}_n))$ increasingly converges to the generalized Thue-Morse sequence $(\theta_i(t_1\cdots t_p^+))$. By Proposition \ref{prop:1} it gives that  ${{\beta}}_n$ converges to ${{\beta}_U}$ from the left.

Recall from  Theorem \ref{th: 3} that $\V{N}$ is defined by
\[
 \V{N}=\Big\{\sum_{i=1}^\f\frac{d_i}{{\beta}^i}: ~\overline{\alpha_1\alpha_2\cdots}<d_n d_{n+1}\cdots<\alpha_1\alpha_2\cdots, ~n\ge 1\Big\}.
\]
 The following lemma
investigates all possible  blocks occuring in the ${\beta}$-expansions of
points in $\V{N}$ for ${\beta}\le{{\beta}}_1$.
\begin{lemma}\label{lem:1}
 Let $t_1\cdots t_p$ be an admissible block and  let $(\alpha_i({{\beta}}_1))=(t_1\cdots t_p^+\overline{t_1\cdots t_p^+})^\f$. If ${\beta}\le {{\beta}}_1$, then
  $\V{N}\subseteq \Pi_{\beta}(Z_{t_1\cdots t_p})$, where
  $$
  Z_{t_1\cdots t_p}:=\big\{(d_i): \overline{t_1\cdots t_p}\le d_n\cdots d_{n+p-1}\le t_1\cdots t_p, n\ge 1\big\}.
  $$
\end{lemma}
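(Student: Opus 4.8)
The plan is to work entirely with symbolic sequences. Let $x\in\V{N}$ and let $(d_i)$ be an expansion of $x$ with $\overline{\alpha_1\alpha_2\cdots}<d_n d_{n+1}\cdots<\alpha_1\alpha_2\cdots$ for all $n\ge1$, where $(\alpha_i)=(\alpha_i({\beta}))$ (such an expansion exists by the definition of $\V{N}$). It suffices to prove $(d_i)\in Z_{t_1\cdots t_p}$, since then $x=\Pi_{\beta}((d_i))\in\Pi_{\beta}(Z_{t_1\cdots t_p})$. Because the sequence $(\overline{d_i})$ satisfies the same pair of inequalities and $Z_{t_1\cdots t_p}$ is invariant under the reflection $(c_i)\mapsto(\overline{c_i})$, it is enough to establish the one-sided bound $d_n\cdots d_{n+p-1}\le t_1\cdots t_p$ for all $n\ge1$; applying this to $(\overline{d_i})$ then yields $d_n\cdots d_{n+p-1}\ge\overline{t_1\cdots t_p}$ as well.

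First I would bring in the hypothesis ${\beta}\le{\beta}_1$. Put $(\eta_i):=(\alpha_i({\beta}_1))=(t_1\cdots t_p^+\,\overline{t_1\cdots t_p^+})^\f$. By Proposition \ref{prop:1} the map ${\beta}\mapsto(\alpha_i({\beta}))$ is strictly increasing, so $(\alpha_i)\le(\eta_i)$ and hence $\overline{\alpha_1\alpha_2\cdots}\ge\overline{\eta_1\eta_2\cdots}$; consequently $(d_i)$ satisfies
\[
\overline{\eta_1\eta_2\cdots}<d_n d_{n+1}\cdots<\eta_1\eta_2\cdots\qquad\text{for all }n\ge1.
\]
The structural fact I would exploit is that, with $A:=t_1\cdots t_p^+$, we have $(\eta_i)=(A\,\overline{A})^\f$, and therefore $\si^p((\eta_i))=(\overline{A}\,A)^\f=\overline{(\eta_i)}$.

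Now suppose, for contradiction, that $d_n\cdots d_{n+p-1}>t_1\cdots t_p$ for some $n$. A short lexicographic check --- using that $t_1\cdots t_p^+$ agrees with $t_1\cdots t_p$ in the first $p-1$ coordinates and that $t_p<N-1$ since $t_1\cdots t_p$ is admissible --- shows that this forces $d_n\cdots d_{n+p-1}\ge t_1\cdots t_p^+=\eta_1\cdots\eta_p$. Then the upper bound in the display forces $d_n\cdots d_{n+p-1}=\eta_1\cdots\eta_p$ (otherwise $d_n d_{n+1}\cdots>\eta_1\eta_2\cdots$), and moreover $d_{n+p}d_{n+p+1}\cdots<\eta_{p+1}\eta_{p+2}\cdots=\si^p((\eta_i))=\overline{(\eta_i)}$, whereas the lower bound in the display, applied at the index $n+p$, gives $d_{n+p}d_{n+p+1}\cdots>\overline{\eta_1\eta_2\cdots}$ --- a contradiction. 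Hence $d_n\cdots d_{n+p-1}\le t_1\cdots t_p$ for all $n$, and by the reduction above the proof is complete. The argument is short; the only point requiring care is the combinatorial implication ``$d_n\cdots d_{n+p-1}>t_1\cdots t_p\Rightarrow d_n\cdots d_{n+p-1}\ge t_1\cdots t_p^+$'' together with keeping careful track of which shifted suffixes of $(d_i)$ the two inequalities in the display are applied to, so that the upper bound at index $n$ and the lower bound at index $n+p$ collide through the identity $\si^p((\eta_i))=\overline{(\eta_i)}$.
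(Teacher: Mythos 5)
Your proof is correct and follows essentially the same route as the paper's: both sandwich $(d_i)$ between $(\alpha_i(\beta_1))=(t_1\cdots t_p^+\,\overline{t_1\cdots t_p^+})^\infty$ and its reflection via the monotonicity of $\beta\mapsto(\alpha_i(\beta))$ from Proposition \ref{prop:1}, and both derive the contradiction from the forced equality $d_n\cdots d_{n+p-1}=t_1\cdots t_p^+$. The only difference is cosmetic: the paper iterates block by block to reconstruct the entire periodic tail before invoking the strict inequality, whereas you short-circuit after one step using $\sigma^p((\eta_i))=\overline{(\eta_i)}$ together with the reflection symmetry of $Z_{t_1\cdots t_p}$.
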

\begin{proof}
Since ${\beta}\le{{\beta}}_1$,  it follows from Proposition \ref{prop:1}  that
$
(\alpha_i({\beta}))\le(\alpha_i({{\beta}}_1))=(t_1\cdots t_p^+\overline{t_1\cdots t_p^+})^\f.
$
Take $x=\Pi_{\beta}((d_i))\in\V{N}$. Then for all $n\ge 1$,
{\small\begin{equation}\label{eq: proof-0}
(\overline{t_1\cdots t_p^+}t_1\cdots t_p^+)^\f\le(\overline{\alpha_i({{\beta}})})< d_n d_{n+1}\cdots< (\alpha_i({\beta}))\le(t_1\cdots t_p^+\overline{t_1\cdots t_p^+})^\f.
\end{equation}}
This implies
\begin{equation*}\label{eq: proof-1}
\overline{t_1\cdots t_p^+}\le d_n d_{n+1}\cdots d_{n+p-1}\le t_1\cdots t_p^+.
\end{equation*}
We will finish the proof by showing that the inequalities in the above equation are strict.

Suppose $d_n d_{n+1}\cdots d_{n+p-1}=t_1\cdots t_p^+$. Then by Equation (\ref{eq: proof-0})  it follows that
$
 d_{n+p}d_{n+p+1}\cdots d_{n+2p-1}\le\overline{t_1\cdots t_p^+}.
$
Again  by Equation (\ref{eq: proof-0}) we have
$
d_{n+p}d_{n+p+1}\cdots d_{n+2p-1}\ge\overline{t_1\cdots t_p^+}.
$
Then
\[
d_{n+p} d_{n+p+1}\cdots d_{n+2p-1}=\overline{t_1\cdots t_p^+}.
\]
 By iteration, we have
$
d_n d_{n+1}\cdots=(t_1\cdots t_p^+\overline{t_1\cdots t_p^+})^\f,
$
leading to a contradiction with (\ref{eq: proof-0}).

Similarly, one can show that $d_n d_{n+1}\cdots d_{n+p-1}\ne \overline{t_1\cdots t_p^+}$.
\end{proof}
By Lemma \ref{lem:1} it yields that $\dim_H\V{N}\le \dim_H\Pi_{\beta}(Z_{t_1\cdots t_p})$ for ${\beta}\le{{\beta}}_1$. In the following lemma we will  show that
$\dim_H\V{N}\ge \dim_H\Pi_{\beta}(Z_{t_1\cdots t_p})$ for ${\beta}\ge {{\beta}_L}$.

\begin{lemma}\label{lem:2}
Let $t_1\cdots t_p$ be an admissible block and let $(\alpha_i({{\beta}_L}))=(t_1\cdots t_p)^\f$. If ${\beta}\ge {{\beta}_L}$, then
  $\dim_H\V{N}\ge\dim_H\Pi_{\beta}(Z_{t_1\cdots t_p})$.
\end{lemma}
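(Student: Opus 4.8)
The plan is to realise almost all of $\Pi_{\beta}(Z_{t_1\cdots t_p})$ inside $\V{N}$ and then to conclude by monotonicity together with countable stability of Hausdorff dimension. The first ingredient is an elementary window-to-tail comparison: if $(d_i)\in Z_{t_1\cdots t_p}$, then the inequalities $\overline{t_1\cdots t_p}\le d_n\cdots d_{n+p-1}\le t_1\cdots t_p$, valid at every position, propagate block by block to
\[
\overline{(t_1\cdots t_p)^\f}\ \le\ d_n d_{n+1}\cdots\ \le\ (t_1\cdots t_p)^\f\qquad(n\ge 1),
\]
and the left (resp. right) inequality is strict unless $d_n d_{n+1}\cdots$ is literally equal to $\overline{(t_1\cdots t_p)^\f}$ (resp. $(t_1\cdots t_p)^\f$); indeed, if the first length-$p$ window already yields a strict inequality we are done, and otherwise one passes to the next window and repeats. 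Since ${\beta}\ge{{\beta}_L}$ and $(\alpha_i({{\beta}_L}))=(t_1\cdots t_p)^\f$, Proposition \ref{prop:1} gives $(\alpha_i({\beta}))\ge(t_1\cdots t_p)^\f$, hence also $\overline{\alpha_1\alpha_2\cdots}\le\overline{(t_1\cdots t_p)^\f}$.

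Next I would discard the countable ``boundary'' set
\[
B:=\big\{(d_i)\in Z_{t_1\cdots t_p}:\ d_n d_{n+1}\cdots\in\{(t_1\cdots t_p)^\f,\ \overline{(t_1\cdots t_p)^\f}\}\ \text{for some }n\ge 1\big\}.
\]
For each fixed $n$ and each of the two possible tails, such a sequence is completely determined by the finite prefix $d_1\cdots d_{n-1}$, so $B$ is a countable union of finite sets and hence countable. If $(d_i)\in Z_{t_1\cdots t_p}\setminus B$, then the displayed comparison is strict at every $n$, so that
\[
\overline{\alpha_1\alpha_2\cdots}\ \le\ \overline{(t_1\cdots t_p)^\f}\ <\ d_n d_{n+1}\cdots\ <\ (t_1\cdots t_p)^\f\ \le\ \alpha_1\alpha_2\cdots ,
\]
which is exactly the condition placing $\Pi_{\beta}((d_i))$ in $\V{N}$. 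Therefore $\Pi_{\beta}\big(Z_{t_1\cdots t_p}\setminus B\big)\subseteq\V{N}$. (When ${\beta}>{{\beta}_L}$ one has the strict inequality $(\alpha_i({\beta}))>(t_1\cdots t_p)^\f$, so in fact $\Pi_{\beta}(Z_{t_1\cdots t_p})\subseteq\V{N}$ outright and $B$ need not be removed; removing $B$ is only needed at the left endpoint ${\beta}={{\beta}_L}$.)

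Finally, monotonicity of Hausdorff dimension gives $\dim_H\V{N}\ge\dim_H\Pi_{\beta}(Z_{t_1\cdots t_p}\setminus B)$, and since $\Pi_{\beta}(B)$ is countable and $\Pi_{\beta}(Z_{t_1\cdots t_p}\setminus B)\supseteq\Pi_{\beta}(Z_{t_1\cdots t_p})\setminus\Pi_{\beta}(B)$, countable stability of $\dim_H$ yields
\[
\dim_H\Pi_{\beta}(Z_{t_1\cdots t_p}\setminus B)\ \ge\ \dim_H\big(\Pi_{\beta}(Z_{t_1\cdots t_p})\setminus\Pi_{\beta}(B)\big)\ =\ \dim_H\Pi_{\beta}(Z_{t_1\cdots t_p}),
\]
which is the assertion. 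The only point that needs a little care is exactly this endpoint ${\beta}={{\beta}_L}$: there the strict inequalities in the definition of $\V{N}$ do force us to delete the sequences whose tail is eventually $(t_1\cdots t_p)^\f$ or $\overline{(t_1\cdots t_p)^\f}$, but those form a countable set and are therefore invisible to Hausdorff dimension; everything else is routine lexicographic bookkeeping.
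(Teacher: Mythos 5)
Your proposal is correct and follows essentially the same route as the paper: both arguments show that membership in $Z_{t_1\cdots t_p}$ forces every tail of $(d_i)$ to lie weakly between $\overline{(t_1\cdots t_p)^\infty}$ and $(t_1\cdots t_p)^\infty$, delete the countable exceptional set where equality occurs (the paper's set $T$ of points with an eventually periodic tail plays exactly the role of your $B$), and conclude via countable stability of Hausdorff dimension. The only cosmetic difference is that the paper establishes the sandwich by contradiction at the level of points (a tail strictly exceeding $(t_1\cdots t_p)^\infty$ would produce a window violating the defining inequalities of $Z_{t_1\cdots t_p}$), whereas you propagate the window inequalities directly block by block at the level of sequences.
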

\begin{proof}
 By the definition of $Z_{t_1\cdots t_p}$ it follows that $(\overline{t_1\cdots t_p})^\f$ and $(t_1\cdots t_p)^\f$ are the least and the largest elements in $Z_{t_1\cdots t_p}$, respectively. Accordingly,
 let $t_*$ and $t^*$ be respectively the least and the largest
  elements in $\Pi_{\beta}(Z_{t_1\cdots t_p})$, i.e.,
{\small\[
  t_*=\Pi_{{\beta}}((\overline{t_1\cdots t_p})^\f)=\frac{\sum_{i=1}^p \overline{t_i}\,{{\beta}}^{p-i}}{{{\beta}}^p-1}, \quad
  t^*=\Pi_{{\beta}}((t_1\cdots t_p)^\f)=\frac{\sum_{i=1}^p {t_i}{{\beta}}^{p-i}}{{{\beta}}^p-1}.
\]}
  Set
{\small\[
T=\bigcup_{n\ge 0}\Big(\big\{\sum_{i=1}^n
\frac{d_i}{{{\beta}}^i}+ \frac{t_*}{{{\beta}}^n}: 0\le d_i\le
N-1\big\}\cup\big\{\sum_{i=1}^n \frac{d_i}{{{\beta}}^i}+ \frac{t^*}{{{\beta}}^n}:
0\le d_i\le N-1\big\}\Big).
  \]}  Clearly, $T$ is a countable set. Then it suffices to show that $\Pi_{{\beta}}(Z_{t_1\cdots t_p})\setminus
  T\subseteq\V{N}$.
   Take $x=\Pi_{\beta}((d_i))\in \Pi_{{\beta}}(Z_{t_1\cdots t_p})\setminus
  T$. We claim that $d_n d_{n+1}\cdots<\al_1(\beta)\al_2(\beta)\cdots$ for any $n\ge 1$.

Suppose that there exists $n_0\ge 1$ such that $d_{n_0}d_{n_0+1}\cdots\ge(\alpha_i({\beta}))$. Since ${\beta}\ge {{\beta}_L}$,  by Proposition \ref{prop:1} it follows that
\[
d_{n_0}d_{n_0+1}\cdots\ge(\alpha_i({\beta}))\ge(\alpha_i({{\beta}_L}))=(t_1\cdots t_p)^\f.
\]
Since $x\notin T$, we  have
$d_{n_0}d_{n_0+1}\cdots>(t_1\cdots t_p)^\f$. Then there exists a nonnegative integer
$s$ such that
$
d_{n_0}d_{n_0+1}\cdots d_{n_0+s p-1}=(t_1\cdots t_p)^{s}
$
and
\[
d_{n_0+s p}d_{n_0+s p+1}\cdots d_{n_0+s p+p-1}>t_1\cdots
t_p,
\]
leading  to a contradiction with $x\in \Pi_{{\beta}}(Z_{t_1\cdots t_p})$.
Thus, $d_n d_{n+1}\cdots < (\alpha_i({\beta}))$ for any
$n\ge 1$.

Similarly, one can show that $d_n d_{n+1}\cdots
>(\overline{\alpha_i({\beta})})$ for any $n\ge 1$.
 So $x\in\V{N}$, and we conclude that
$\Pi_{{\beta}}(Z_{t_1\cdots t_p})\setminus
  T\subseteq\V{N}$.
\end{proof}
In the following we will  investigate  the structure of $\Pi_{\beta}(Z_{t_1\cdots t_p})$. If $p=1$, then $\Pi_\beta(Z_{t_1})$ is a self-similar set whose structure is well-studied (cf.~\cite{Hutchinson_1981}). So, we only need to consider the case for $p\ge 2$. Note that $(d_i)\in Z_{t_1\cdots t_p}$ if and only if $d_n d_{n+1}\cdots d_{n+p-1}\notin{\mathcal{F}}$ for any $n\ge 1$, where
\[
{\mathcal{F}}:=\big\{c_1\cdots c_p: c_1\cdots c_p<\overline{t_1\cdots t_p} ~\textrm{or}~c_1\cdots c_p>t_1\cdots t_p\big\}.
\]
Then $Z_{t_1\cdots t_p}$ is a $p-1$ step of shift of finite type (cf.~\cite{Lind_Marcus_1995}). We construct an edge graph  ${\mathcal{G}}=(G, {V}, {E})$ with the vertices set ${V}$  defined by
\[
{V}:=\big\{u_1\cdots u_{p-1}: \overline{t_1\cdots t_{p-1}}\le u_1\cdots u_{p-1}\le t_1\cdots t_{p-1}\big\}.
\]
For two vertices $\mathbf{u}=u_1\cdots u_{p-1}, \mathbf{v}=v_1\cdots v_{p-1}\in{V}$, we draw an edge $\mathbf{uv}\in {E}$ from $\mathbf{u}$ to $\mathbf{v}$ and label it $\ell_{\mathbf{uv}}=u_1$ if $u_2\cdots u_{p-1}=v_1\cdots v_{p-2}$\footnote{When $p=2$ this holds automatically.} and $u_1\cdots u_{p-1}v_{p-1}\notin{\mathcal{F}}$. One can check that the edge graph ${\mathcal{G}}=(G, {V}, {E})$ is a representation of $Z_{t_1\cdots t_p}$.

\begin{lemma}\label{lem:2'}
 Let $t_1\cdots t_p$ be an admissible block with $p\ge 2$ and let  $(\alpha_i({{\beta}_L}))=(t_1\cdots t_p)^\f$. Then for any ${\beta}\ge {{\beta}_L}$ the set $\Pi_{\beta}(Z_{t_1\cdots t_p})$ is a graph-directed set satisfying the SSC.
\end{lemma}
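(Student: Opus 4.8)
The plan is to verify the two defining ingredients of a graph-directed set satisfying the strong separation condition: a finite system of maps attached to the edges of $\mathcal G$ whose directed iteration reconstructs $\Pi_\beta(Z_{t_1\cdots t_p})$, and a separation estimate guaranteeing the pieces sitting over distinct edges leaving a common vertex are pairwise disjoint. First I would set up, for each vertex $\mathbf u=u_1\cdots u_{p-1}\in V$, the ``cylinder interval''
\[
J_{\mathbf u}:=\Big\{\Pi_\beta((d_i)): d_1\cdots d_{p-1}=u_1\cdots u_{p-1},\ (d_i)\in Z_{t_1\cdots t_p}\Big\},
\]
and observe that, reading off the first symbol of an admissible sequence in $Z_{t_1\cdots t_p}$ starting with $\mathbf u$, one obtains a decomposition $J_{\mathbf u}=\bigcup_{\mathbf{uv}\in E} f_{\ell_{\mathbf{uv}}}(J_{\mathbf v})$, where $f_d(x)=(x+d)/\beta$ are the IFS maps from the Introduction. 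This is exactly the graph-directed structure: the attractor over vertex $\mathbf u$ is $J_{\mathbf u}$, the maps are the contractions $f_{\ell_{\mathbf{uv}}}$ indexed by edges, and this identity is a direct unwinding of the definition of $\mathcal G$ as a representation of $Z_{t_1\cdots t_p}$ (one has to check the bookkeeping that $u_1\cdots u_{p-1}v_{p-1}\notin\mathcal F$ is precisely the condition for the concatenated sequence to stay in $Z_{t_1\cdots t_p}$, which is where admissibility enters).

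The substantive part is the SSC: for a fixed vertex $\mathbf u$ and two distinct edges $\mathbf{uv},\mathbf{uv'}$ leaving it, I must show $f_{\ell_{\mathbf{uv}}}(J_{\mathbf v})\cap f_{\ell_{\mathbf{uv'}}}(J_{\mathbf v'})=\emptyset$. There are two cases. If the labels differ, $\ell_{\mathbf{uv}}\ne\ell_{\mathbf{uv'}}$, disjointness follows from separating the ranges of $f_d$ and $f_{d'}$ with $d\ne d'$, which reduces to the estimate that the length of $J_{\mathbf v}$ (bounded by the diameter of $\Pi_\beta(Z_{t_1\cdots t_p})$, i.e. $t^*-t_*$) is strictly less than $1$ — and here $\beta\ge\beta_L$ is used, since $t^*-t_*=\Pi_\beta((t_1\cdots t_p)^\infty)-\Pi_\beta((\overline{t_1\cdots t_p})^\infty)$ and the relation $(\alpha_i(\beta_L))=(t_1\cdots t_p)^\infty$ forces this difference to be $<1$ at $\beta=\beta_L$ (the quasi-greedy expansion of $1$ being $(t_1\cdots t_p)^\infty$ means $\Pi_{\beta_L}((t_1\cdots t_p)^\infty)=1$, and shifting down by the reflected tail costs a definite amount), and it only decreases as $\beta$ increases. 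If instead $\ell_{\mathbf{uv}}=\ell_{\mathbf{uv'}}=u_1$ but $\mathbf v\ne\mathbf v'$, then $\mathbf v$ and $\mathbf v'$ both have the form $u_2\cdots u_{p-1}(\cdot)$ with distinct last coordinates, so $J_{\mathbf v}$ and $J_{\mathbf v'}$ already lie in disjoint level-$1$ subcylinders of $\Pi_\beta(Z_{t_1\cdots t_p})$ corresponding to distinct $(p{-}1)$-th symbols; applying the common contraction $f_{u_1}$ preserves disjointness. Combining the two cases over all vertices gives the SSC.

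The main obstacle I anticipate is the quantitative separation bound in the equal-label-but-different-successor case: one needs that the two candidate continuations $u_1\cdots u_{p-1}v_{p-1}$ and $u_1\cdots u_{p-1}v'_{p-1}$, which by admissibility both lie in $[\overline{t_1\cdots t_p},\, t_1\cdots t_p]$ lexicographically, give geometrically separated subintervals — i.e. the gap created by the differing $p$-th digit is not subsequently swallowed by the tails. This is where the admissible-block inequalities of Definition~\ref{def:admissible block} do the real work: they guarantee that within $Z_{t_1\cdots t_p}$ the sub-blocks starting with a prescribed $u_1\cdots u_{p-1}$ and a fixed next digit stay boxed away from those with a larger next digit, uniformly, so the geometric gap is bounded below by a positive constant times $\beta^{-p}$. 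Once that lemma is in hand, the graph-directed SSC assembly is routine, and one invokes the standard theory (e.g. \cite{Lind_Marcus_1995} for the combinatorial side and the graph-directed IFS formalism for the geometric side) to conclude. I would isolate the separation estimate as the one genuinely new computation and present the rest as verification of definitions.
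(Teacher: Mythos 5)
Your construction of the graph-directed system is exactly the paper's: the same vertex sets ($K_{\mathbf u}$ in the paper, your $J_{\mathbf u}$), the same edge maps $f_{\mathbf{uv}}(x)=(x+u_1)/\beta$, and the same unwinding of the condition $u_1\cdots u_{p-1}v_{p-1}\notin\mathcal F$ to get the decomposition identity. The SSC part, however, needs repair, for two reasons. First, your Case (a) is vacuous: by the definition of $\mathcal G$, \emph{every} edge leaving a fixed vertex $\mathbf u$ carries the same label $\ell_{\mathbf{uv}}=u_1$, so two edges out of $\mathbf u$ can never have different labels; only your Case (b) occurs. Second, in Case (b) you assert that distinct $(p-1)$-th symbols force geometric disjointness and then defer the justification to an unproven ``separation estimate,'' which you attribute to the inequalities of Definition~\ref{def:admissible block}. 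That attribution is off: the estimate does not come from the combinatorics of the admissible block but from the single fact that $\Pi_\beta(Z_{t_1\cdots t_p})\subseteq[0,1]$ when $\beta\ge\beta_L$ --- which is precisely the observation you correctly derived (via $(\alpha_i(\beta_L))=(t_1\cdots t_p)^\infty$, Proposition~\ref{prop:1}, and the two-sided bound from the proof of Lemma~\ref{lem:2}) but filed under the vacuous Case (a).

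Concretely, the paper's separation argument is: two successors $\mathbf v\ne\mathbf v'$ of $\mathbf u$ differ only in $v_{p-1}<v'_{p-1}$, so any point of $f_{\mathbf{uv}}(K_{\mathbf v})$ is $\sum_{i=1}^{p-1}u_i\beta^{-i}+v_{p-1}\beta^{-p}+\beta^{-p}\cdot(\text{tail})$ with $\text{tail}\in\Pi_\beta(\text{shift of }Z_{t_1\cdots t_p})\subseteq[0,1]$; hence its value is at most the common prefix plus $(v_{p-1}+1)\beta^{-p}\le$ prefix $+\,v'_{p-1}\beta^{-p}$, which is below every point of $f_{\mathbf{uv}'}(K_{\mathbf v'})$. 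In other words, the digit gap of size $\beta^{-p}$ exactly dominates the maximal tail contribution $\beta^{-p}$, and that domination is the whole content of the ``separation estimate'' you flagged as the remaining new computation. So all the ingredients of the paper's proof are present in your proposal, but they are misassembled: move the $[0,1]$-containment from the empty case into the real one and the argument closes; as written, the decisive step is left as a stated but unproven lemma with the wrong source identified.
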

\begin{proof}
  Let ${\mathcal{G}}=(G, {V}, {E})$ be the edge graph representing  $Z_{t_1\cdots t_p}$.
  For $\mathbf{u}=u_1\cdots u_{p-1}\in{V}$, let
\[
K_{\mathbf{u}}:=\Big\{\sum_{i=1}^\f\frac{d_i}{{\beta}^i}:
d_i=u_i, 1\le i\le p-1, ~\textrm{and}~  d_{n}\cdots
d_{n+p-1}\notin{\mathcal{F}}, n\ge 1\Big\}.
\]
For an edge $\mathbf{uv}\in{E}$ with $\mathbf{u}=u_1\cdots u_{p-1}, \mathbf{v}=v_1\cdots v_{p-1}\in{V}$ we define the map $f_{\mathbf{uv}}$ as
\begin{equation}\label{eq: lem2'-1}
f_{\mathbf{uv}}(x)=\frac{x+\ell_{uv}}{{\beta}}=\frac{x+u_1}{{\beta}}.
\end{equation}
We claim  that for any $\mathbf{u}\in{V}$,
\begin{equation}\label{eq: lem2'-2}
K_{\mathbf{u}}=\bigcup_{\mathbf{uv}\in {E}}f_{\mathbf{uv}}(K_\mathbf{v}).
 \end{equation}

Take $\Pi_{\beta}((s_i))\in K_\mathbf{u}$. Then $s_1=u_1, \cdots, s_{p-1}=u_{p-1}$; and
$
\overline{t_1\cdots t_p}\le s_n\cdots s_{n+p-1}\le t_1\cdots t_p
$
 for any $n\ge 1$. This implies that
\[
\mathbf{v}:=s_2\cdots s_p=u_2\cdots u_{p-1}s_p\in{V}\quad\textrm{ and }\quad \mathbf{uv}\in{E}.
\] Then by Equation (\ref{eq: lem2'-1}) we have
\begin{eqnarray*}
&&\Pi_{\beta}((s_i))\in f_{\mathbf{uv}}(K_\mathbf{v})=\Big\{\sum_{i=1}^\f\frac{d_i}{{\beta}^i}: d_i=u_i, 1\le i\le p-1; d_p=s_p;\\
&&\hspace{4.5cm}\textrm{and}~\overline{t_1\cdots t_p}\le d_n\cdots d_{n+p-1}\le t_1\cdots t_p, n\ge 1\Big\}.
\end{eqnarray*}
So, $K_\mathbf{u}\subseteq \bigcup_{\mathbf{uv}\in {E}}f_{\mathbf{uv}}(K_\mathbf{v}).$

 For the other inclusion of Equation (\ref{eq: lem2'-2}) we take $\Pi_{\beta}((s_i))\in\bigcup_{\mathbf{uv}\in {E}}f_{\mathbf{uv}}(K_\mathbf{v})$. Then there exist
 $\mathbf{uv}\in{E}$ with $\mathbf{u}=u_1\cdots u_{p-1}, \mathbf{v}=v_1\cdots v_{p-1}\in{V}$ such that $\Pi_{\beta}((s_i))\in f_{\mathbf{uv}}(K_\mathbf{v})$. This implies that
$
s_i=u_i, 1\le i\le p-1;~s_p=v_{p-1}$ and
\[
\overline{t_1\cdots t_p}\le s_n\cdots s_{n+p-1}\le t_1\cdots t_p,~ n\ge 1.
\]
So, $\Pi_{\beta}((s_i))\in K_\mathbf{u}$ and we conclude that $\bigcup_{\mathbf{uv}\in {E}}f_{\mathbf{uv}}(K_\mathbf{v})\subseteq K_\mathbf{u}$. Then Equation (\ref{eq: lem2'-2}) holds.

Similarly, one can  check that
\[
\Pi_{\beta}(Z_{t_1\cdots t_p})=\bigcup_{\mathbf{v}\in{V}}K_\mathbf{v}.
\]
 So, $\Pi_{\beta}(Z_1\cdots t_p)$ is a graph-directed set generated by the IFS
$
\{(K_{\mathbf{u}})_{\mathbf{u}\in{V}}, (f_{\mathbf{uv}})_{\mathbf{uv}\in{E}}\}$ (cf.~\cite{Mauldin_Williams_1988}).
We will finish the proof by showing that the IFS $\{(K_{\mathbf{u}})_{\mathbf{u}\in{V}}, (f_{\mathbf{uv}})_{\mathbf{uv}\in{E}}\}$  satisfies the SSC.

Since ${\beta}\ge{{\beta}_L}$, it follows from the proof of Lemma \ref{lem:2} that for any $(d_i)\in Z_{t_1\cdots t_p}$ we have
\[
\overline{\alpha_1({\beta})\alpha_2({\beta})\cdots}\le d_n d_{n+1}\cdots\le\alpha_1({\beta})\alpha_2({\beta})\cdots\quad\textrm{for any}~n\ge 1.
\]
By Proposition \ref{prop:1} this implies that $\Pi_{\beta}(Z_{t_1\cdots t_p})\subseteq[0,1]$. Let $\mathbf{uv}, \mathbf{uv}'\in{E}$ with $\mathbf{u}=u_1\cdots u_{p-1}, \mathbf{v}=v_1\cdots v_{p-1}$ and $\mathbf{v}'=v_1'\cdots v_{p-1}'$. Suppose $v_{p-1}<v'_{p-1}$. Then
\begin{eqnarray*}
\sum_{i=1}^{p-1}\frac{u_i}{{\beta}^i}+\frac{v_{p-1}}{{\beta}^p}+\sum_{i=1}^\f\frac{d_i}{{\beta}^{p+i}}&\le&\sum_{i=1}^{p-1}\frac{u_i}{{\beta}^i}+\frac{v_{p-1}+1}{{\beta}^p}\\
&<&\sum_{i=1}^{p-1}\frac{u_i}{{\beta}^i}+\frac{v'_{p-1}}{{\beta}^p}+\sum_{i=1}^\f\frac{d_i'}{{\beta}^{p+i}}
\end{eqnarray*}
for any $(d_i), (d_i')\in Z_{t_1\cdots t_p}$.   This yields  $f_{\mathbf{uv}}(K_\mathbf{v})\cap f_{\mathbf{uv}'}(K_{\mathbf{v}'})=\emptyset$.
\end{proof}

When $p=1$ one can easily get the following lemma.
\begin{lemma}
  \label{lem:2''}
Let $t_1$ be an admissible block and let $(\alpha _i(\beta _L))=t_1^\infty$. Then for any $\beta \geq \beta _L$ the set $\Pi _\beta (Z_{t_1})$ is a self similar set satisfying SSC.
\end{lemma}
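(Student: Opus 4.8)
\emph{Plan.} The idea is that when $p=1$ the graph-directed construction of Lemma~\ref{lem:2'} degenerates to an ordinary IFS on a full shift, and the SSC reduces to one elementary inequality between the endpoints of $\Pi_{\beta}(Z_{t_1})$.

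First I would note that for $p=1$ the defining condition of $Z_{t_1}$ involves single digits only, so
\[
Z_{t_1}=\{\overline{t_1},\overline{t_1}+1,\ldots,t_1\}^{\f},
\]
the full one-sided shift over the alphabet $A:=\{\overline{t_1},\ldots,t_1\}$, which is nonempty since the admissibility of $t_1$ forces $\overline{t_1}\le t_1$. Consequently $\Pi_{\beta}(Z_{t_1})$ is precisely the attractor of the IFS $\{f_d(x)=(x+d)/{\beta}:d\in A\}$, hence a self-similar set in the sense of Hutchinson (cf.~\cite{Hutchinson_1981}).

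Next I would locate the convex hull of $\Pi_{\beta}(Z_{t_1})$. Exactly as in the proof of Lemma~\ref{lem:2}, the least and largest points of $\Pi_{\beta}(Z_{t_1})$ are $t_*=\Pi_{\beta}((\overline{t_1})^{\f})=\overline{t_1}/({\beta}-1)$ and $t^*=\Pi_{\beta}(t_1^{\f})=t_1/({\beta}-1)$, so that $\Pi_{\beta}(Z_{t_1})\subseteq[t_*,t^*]$ and $f_d(\Pi_{\beta}(Z_{t_1}))\subseteq[f_d(t_*),f_d(t^*)]$ for each $d\in A$. To obtain the SSC it then suffices to show the closed intervals $[f_d(t_*),f_d(t^*)]$, $d\in A$, are pairwise disjoint; since each $f_d$ is an increasing affine map and the elements of $A$ are consecutive integers, this is equivalent to $f_d(t^*)<f_{d+1}(t_*)$ for every consecutive pair $d,d+1\in A$, i.e.\ to
\[
t^*-t_*=\frac{t_1-\overline{t_1}}{{\beta}-1}<1.
\]

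Finally I would verify this inequality. Since $(\alpha_i({\beta}_L))=t_1^{\f}$, Proposition~\ref{prop:1} gives $1=\sum_{i\ge1}t_1{\beta}_L^{-i}=t_1/({\beta}_L-1)$, hence ${\beta}_L-1=t_1$, and combined with ${\beta}\ge{\beta}_L$ this yields ${\beta}-1\ge t_1$. Because $t_1=t_p<N-1$ by the definition of an admissible block, we get $t_1-\overline{t_1}=2t_1-(N-1)<t_1\le{\beta}-1$, which is exactly the required strict inequality (and in the degenerate case $\overline{t_1}=t_1$ the set $\Pi_{\beta}(Z_{t_1})$ is a single point and the SSC holds vacuously). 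I do not expect any real obstacle: this is the collapsed $p=1$ counterpart of Lemma~\ref{lem:2'}, and the only points needing care are pinning down ${\beta}_L=t_1+1$ from Proposition~\ref{prop:1}, treating the one-letter alphabet separately, and observing that separating the hull intervals is enough to separate the attractor pieces.
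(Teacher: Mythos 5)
Your proof is correct; the paper itself omits the argument (stating only that ``one can easily get'' the lemma), and what you write is precisely the natural $p=1$ collapse of the paper's proof of Lemma~\ref{lem:2'}: identify $Z_{t_1}$ as the full shift on $\{\overline{t_1},\dots,t_1\}$, and separate consecutive pieces by showing the attractor's diameter $t^*-t_*=(t_1-\overline{t_1})/(\beta-1)$ is strictly less than $1$, using $\beta\ge\beta_L=t_1+1$ (from Proposition~\ref{prop:1}) and $\overline{t_1}\ge 1$ (from $t_1<N-1$). All steps check out, including the strictness of the inequality and the degenerate one-letter case.
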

Now we give the Hausdorff dimension of $\U{N}$ for $\beta\in[\beta_L, \beta_1]$.
\begin{proposition}\label{lem:3}
  Let  $t_1\cdots t_p$ be an admissible block and let $(\alpha_i({{\beta}_L}))=(t_1\cdots t_p)^\f, (\alpha_i({{\beta}}_1))=(t_1\cdots t_p^+\overline{t_1\cdots t_p^+})^\f$. Then for any ${\beta}\in[{{\beta}_L}, {{\beta}}_1]$ the Hausdorff dimension of $\U{N}$ is given by
\[
 \dim_H\U{N}=\frac{h(Z_{t_1\cdots t_p})}{\log{\beta}},
 \]
  where $h(Z_{t_1\cdots t_p})$ is the topological entropy of the subshift of finite type
 $Z_{t_1\cdots t_p}$.
\end{proposition}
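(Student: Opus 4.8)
The plan is to assemble the lemmas already proved in this section and then invoke the standard dimension theory of graph-directed self-similar sets. By Theorem~\ref{th: 3} we have $\dim_H\U{N}=\dim_H\V{N}$, so it suffices to compute $\dim_H\V{N}$ for ${\beta}\in[{{\beta}_L},{{\beta}}_1]$. For such a ${\beta}$ both hypotheses ${\beta}\le{{\beta}}_1$ and ${\beta}\ge{{\beta}_L}$ hold simultaneously; hence Lemma~\ref{lem:1} gives $\V{N}\subseteq\Pi_{\beta}(Z_{t_1\cdots t_p})$, so $\dim_H\V{N}\le\dim_H\Pi_{\beta}(Z_{t_1\cdots t_p})$, while Lemma~\ref{lem:2} gives the reverse inequality $\dim_H\V{N}\ge\dim_H\Pi_{\beta}(Z_{t_1\cdots t_p})$. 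Therefore $\dim_H\V{N}=\dim_H\Pi_{\beta}(Z_{t_1\cdots t_p})$, and the problem reduces to computing the Hausdorff dimension of the projected subshift $\Pi_{\beta}(Z_{t_1\cdots t_p})$.

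Next I would compute $\dim_H\Pi_{\beta}(Z_{t_1\cdots t_p})$. When $p\ge 2$, Lemma~\ref{lem:2'} exhibits $\Pi_{\beta}(Z_{t_1\cdots t_p})$ as the attractor of the graph-directed IFS $\{(K_{\mathbf u})_{\mathbf u\in V},(f_{\mathbf{uv}})_{\mathbf{uv}\in E}\}$ associated with the edge graph ${\mathcal G}=(G,V,E)$ representing $Z_{t_1\cdots t_p}$, where every $f_{\mathbf{uv}}$ is a similarity of ratio $1/{\beta}$ and the system satisfies the SSC. By the Mauldin--Williams formula \cite{Mauldin_Williams_1988} for graph-directed self-similar sets satisfying the SSC, the common dimension $s$ of the sets $K_{\mathbf u}$ is characterised by ${\beta}^{s}=\rho(A)$, i.e. $s=\log\rho(A)/\log{\beta}$, where $A$ is the adjacency (transition) matrix of ${\mathcal G}$ and $\rho(A)$ its spectral radius; if ${\mathcal G}$ is not strongly connected one replaces $\rho(A)$ by the maximum of the spectral radii over its strongly connected components. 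Since ${\mathcal G}$ is a presentation of the $(p-1)$-step subshift of finite type $Z_{t_1\cdots t_p}$, paths in ${\mathcal G}$ are in bijection with admissible words of $Z_{t_1\cdots t_p}$, so the topological entropy is $h(Z_{t_1\cdots t_p})=\log\rho(A)$ (again with the same reduction to strongly connected components, cf.~\cite{Lind_Marcus_1995}), and we conclude $\dim_H\Pi_{\beta}(Z_{t_1\cdots t_p})=h(Z_{t_1\cdots t_p})/\log{\beta}$. For $p=1$, Lemma~\ref{lem:2''} makes $\Pi_{\beta}(Z_{t_1})$ a self-similar set of ratio $1/{\beta}$ with one map for each digit $d$ satisfying $\overline{t_1}\le d\le t_1$ and satisfying the SSC; hence $\dim_H\Pi_{\beta}(Z_{t_1})=\log(2t_1-N+2)/\log{\beta}$, which is $h(Z_{t_1})/\log{\beta}$ because $Z_{t_1}$ is the full shift on the alphabet $\{\overline{t_1},\dots,t_1\}$.

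Combining the two steps yields $\dim_H\U{N}=\dim_H\V{N}=h(Z_{t_1\cdots t_p})/\log{\beta}$ for every ${\beta}\in[{{\beta}_L},{{\beta}}_1]$, which is the assertion. The one point that deserves genuine care --- and the only place where the argument is more than bookkeeping --- is the identification of the Hausdorff dimension of the graph-directed attractor with the topological entropy of $Z_{t_1\cdots t_p}$: one must check that the graph ${\mathcal G}$ built in the proof of Lemma~\ref{lem:2'} is really a presentation of $Z_{t_1\cdots t_p}$, so that its adjacency matrix is the transition matrix whose spectral radius governs the entropy, and one must observe that the Mauldin--Williams dimension formula and the entropy formula both localise to the strongly connected components of ${\mathcal G}$ in exactly the same way, so that no irreducibility assumption on ${\mathcal G}$ is needed. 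The separation condition provided by Lemmas~\ref{lem:2'}--\ref{lem:2''} removes all overlap issues, so everything else is routine.
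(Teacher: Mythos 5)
Your proposal is correct and follows essentially the same route as the paper: reduce to $\V{N}$ via Theorem~\ref{th: 3}, sandwich its dimension between that of $\Pi_{\beta}(Z_{t_1\cdots t_p})$ using Lemmas~\ref{lem:1} and~\ref{lem:2}, and then compute the dimension of the graph-directed (or, for $p=1$, self-similar) attractor from Lemmas~\ref{lem:2'}--\ref{lem:2''} as $h(Z_{t_1\cdots t_p})/\log\beta$. Your extra remarks on identifying the adjacency matrix's spectral radius with the entropy and on localizing to strongly connected components only make explicit what the paper leaves to the citation of \cite{Lind_Marcus_1995}.
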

\begin{proof}
By Lemmas   \ref{lem:1},  \ref{lem:2} and Theorem \ref{th: 3} it follows  that for any ${\beta}\in[{{\beta}_L},{{\beta}}_1]$,
\[
\dim_H\U{N}=\dim_H \Pi_{\beta}(Z_{t_1\cdots t_p}).
\]
 By Lemma \ref{lem:2'} and Lemma  \ref{lem:2''} $\Pi_{\beta}(Z_1\cdots t_p)$ is a graph-directed set or a self-similar set satisfying the SSC. Then the Hausdorff dimension of $\Pi_{\beta}(Z_{t_1\cdots t_p})$ can be calculated via the topological entropy of $Z_{t_1\cdots t_p}$ (cf.~\cite{Lind_Marcus_1995}), i.e.,
$
\dim_H\Pi_{\beta}(Z_{t_1\cdots t_p})={h(Z_{t_1\cdots t_p})}/{\log{\beta}}.
$
\end{proof}

\begin{proof}[Proof of Theorem \ref{th: main results}]
Recall from (\ref{eq: beta_n}) that ${{\beta}}_n$ is defined by
\[
(\alpha_i({{\beta}_n}))=(\th_1\cdots\th_{2^{n-1}p}\,\overline{\th_1\cdots\th_{2^{n-1}p}})^\f.
\]
Note by Lemma \ref{lem: keylem} that $t_1\cdots t_p^+ \overline{t_1\cdots t_p^+}$ is admissible. Then by Proposition \ref{lem:3} it follows that for any ${\beta}\in[{{\beta}}_1,{{\beta}_2}]$
\begin{equation*}\label{eq: sec4-2}
\dim_H \U{N}=\frac{h(Z_{t_1\cdots t_p^+\overline{t_1\cdots t_p^+}})}{\log{\beta}}.
\end{equation*}
By taking  ${\beta}={{\beta}}_1$ in the above equation and in Proposition \ref{lem:3}  it follows that
$h(Z_{t_1\cdots t_p})=h(Z_{t_1\cdots t_p^+\overline{t_1\cdots t_p^+}}).$
 So, for any ${\beta}\in[{{\beta}_L},{{\beta}_2}]$ we  have
$
\dim_H\mathbf{U}_{{\beta}, N}={h(Z_{t_1\cdots t_p})}/{\log{{\beta}}}.
$
By induction, we have
\[
\dim_H\U{N}=\frac{h(Z_{t_1\cdots t_p})}{\log{{\beta}}}
 \]
 for any ${\beta}\in[{{\beta}_L},{{\beta}_n}]$. Letting $n\ra\f$ we have by Proposition \ref{prop:1} that ${{\beta}_n}\ra{{\beta}_U}$.  The authors in \cite{Komornik_Kong_Li_2014}  showed that the map ${\beta}\ra\dim_H\U{N}$ is continuous for ${\beta}>1$. This establishes Theorem \ref{th: main results}.
\end{proof}

\begin{remark}
Let $\overline{\U{N}}$ denote the closure of $\U{N}$. The authors in \cite{Komornik_Kong_Li_2014} showed  for $\beta>1$
 that the set $\U{N}$ may be not closed, and the set $\overline{\U{N}}\setminus\U{N}$ is at most countable. Then for ${\beta}\in[{{\beta}_L}, {{\beta}_U}]$,
\[
\dim_H\overline{\U{N}}=\dim_H\U{N}=\frac{h(Z_{t_1\cdots t_p})}{\log{\beta}}.
\]
\end{remark}

\section{Explicit formulae for the Hausdorff dimensions of $\U{N}$}\label{sec: examples}

In this section we consider some examples for which the Hausdorff dimension of $\U{N}$ can be calculated explicitly. An admissible interval $[{{\beta}_L}, {{\beta}_U}]$ is called a \emph{$p$-level admissible interval} if $[{{\beta}_L}, {{\beta}_U}]$ can be generated by an admissible block $t_1\cdots t_p$ of length $p$. First we will consider the case for the $1$-level admissible intervals.

\begin{theorem}\label{th: leve1 one}
   Given $N\ge 3$, let $[{{\beta}_L}, {{\beta}_U}]$ be an admissible interval generated by an admissible block $t_1\in\{0,1,\cdots,N-1\}$. Then $\lceil(N-1)/2\rceil\le t_1\le N-2$, and  for any ${\beta}\in[{{\beta}_L}, {{\beta}_U}]$ the Hausdorff dimension of $\U{N}$ is given by
\[
  \dim_H\U{N}=\frac{\log(2t_1+2-N)}{\log{\beta}}.
  \]
\end{theorem}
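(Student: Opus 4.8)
The plan is to specialize Theorem~\ref{th: main results} to the case $p=1$, so that the work reduces to two elementary verifications: first, pinning down the range of the single digit $t_1$; second, computing the topological entropy $h(Z_{t_1})$. First I would analyze the admissibility condition in Definition~\ref{def:admissible block} when $p=1$. For a block consisting of a single digit $t_1$, the condition reads $t_1<N-1$ together with $\overline{t_1}\le t_1$ and $t_1^+\,\overline{t_1}\le t_1^+$ (the latter being vacuous since $\overline{\phantom{t}}$ of the empty word is empty, or more precisely $t_i\cdots t_p^+\overline{t_1\cdots t_{i-1}}=t_1^+$ when $i=1$). The inequality $\overline{t_1}=N-1-t_1\le t_1$ gives $t_1\ge (N-1)/2$, i.e. $t_1\ge\lceil(N-1)/2\rceil$, and $t_1<N-1$ gives $t_1\le N-2$; this range is nonempty exactly when $N\ge 3$. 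So the constraint $\lceil(N-1)/2\rceil\le t_1\le N-2$ falls out immediately.

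Next I would invoke Theorem~\ref{th: main results}: for any $\beta\in[{\beta}_L,{\beta}_U]$ we have $\dim_H\U{N}=h(Z_{t_1})/\log\beta$, where $Z_{t_1}=\{(d_i):\overline{t_1}\le d_n\le t_1\text{ for all }n\ge 1\}$. But this subshift is just the full shift on the alphabet $\{\overline{t_1},\overline{t_1}+1,\dots,t_1\}=\{N-1-t_1,\dots,t_1\}$, which has exactly $t_1-\overline{t_1}+1=t_1-(N-1-t_1)+1=2t_1+2-N$ symbols. Hence $Z_{t_1}$ is a full shift on $2t_1+2-N$ symbols, and its topological entropy is $h(Z_{t_1})=\log(2t_1+2-N)$. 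Plugging this into the formula yields
\[
\dim_H\U{N}=\frac{\log(2t_1+2-N)}{\log\beta},
\]
as claimed. (One should note $2t_1+2-N\ge 1$ precisely because $t_1\ge\lceil(N-1)/2\rceil$, so the logarithm makes sense and is nonnegative; and $2t_1+2-N\le N-2+2-N=0$ is false, indeed $2t_1+2-N\le N-2$, consistent with $\dim_H\U{N}<1$ for $\beta$ near $N$.)

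There is essentially no hard part here; the statement is a clean corollary of the general theorem once one observes that the length-one SFT degenerates to a full shift. The only point requiring a little care is the edge case $p=1$ in the various auxiliary constructions: in the proof of Theorem~\ref{th: main results} the graph-directed construction of Lemma~\ref{lem:2'} assumed $p\ge 2$, but Lemma~\ref{lem:2''} covers $p=1$ by exhibiting $\Pi_\beta(Z_{t_1})$ as an ordinary self-similar set satisfying the SSC, so the entropy-to-dimension passage still applies. I would therefore organize the proof as: (i) read off the range of $t_1$ from Definition~\ref{def:admissible block}; (ii) identify $Z_{t_1}$ as the full shift on $\{N-1-t_1,\dots,t_1\}$ and compute its entropy as $\log(2t_1+2-N)$; (iii) conclude by Theorem~\ref{th: main results}. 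No genuine obstacle is anticipated.
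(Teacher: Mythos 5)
Your proposal is correct and follows essentially the same route as the paper: read off $\lceil(N-1)/2\rceil\le t_1\le N-2$ from Definition~\ref{def:admissible block} with $p=1$, apply Theorem~\ref{th: main results}, and compute $h(Z_{t_1})=\log(t_1-\overline{t_1}+1)=\log(2t_1+2-N)$ since $Z_{t_1}$ is the full shift on $2t_1+2-N$ symbols. Your added remark that the $p=1$ case of the dimension formula rests on Lemma~\ref{lem:2''} rather than the graph-directed construction is a fair observation, though it concerns the proof of Theorem~\ref{th: main results} rather than this corollary.
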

\begin{proof}
By Definition \ref{def:admissible interval} it follows that
$
(\alpha_i({{\beta}_L}))=t_1^\f\quad\textrm{and}\quad (\alpha_i({{\beta}_U}))=(\th_i(t_1+1)).
$
 Since $t_1\in\{0,1,\cdots,N-1\}$ is an admissible block, by Definition \ref{def:admissible block} it gives that
$\lceil(N-1)/2\rceil\le t_1\le N-2$. By Theorem \ref{th: main results} it follows that for any ${\beta}\in[{{\beta}_L}, {{\beta}_U}]$ the Hausdorff dimension of $\U{N}$ is given by
\[
\dim_H\U{N}=\frac{h(Z_{t_1})}{\log{\beta}},
\]
where
$Z_{t_1}=\{(d_i): \overline{{t_1}}\le d_n\le {t_1}, n\ge 1\}.$
So, the theorem follows by an easy calculation that $h(Z_{t_1})=\log(t_1-\overline{{t_1}}+1)=\log(2{t_1}+2-N).$
\end{proof}
If we take  ${t_1}=N-2$ in Theorem \ref{th: leve1 one}, then we extend the main result of  Kall\'{o}s \cite{Kallos_1999}. This can be seen by the following observation.
Clearly, ${{\beta}_L}=N-1$. By Definition \ref{def:generalized Thue-Morse sequence} of the generalized Thue-Morse sequence $(\th_i(N-1))$ it follows that
\begin{eqnarray*}
(\alpha_i({{\beta}_U}))=(\th_i(N-1))&=&(N-1)1\, 0(N-1)\,0(N-2)\,(N-1)1\cdots\\
&>&((N-1)0)^\f=\Big(\alpha_i\big(\frac{N-1+\sqrt{N^2-2N+5}}{2}\big)\Big).
\end{eqnarray*}
By Proposition \ref{prop:1} this implies that ${{\beta}_U}>(N-1+\sqrt{N^2-2N+5})/2$.

%
Now we consider the Hausdorff dimension of $\U{N}$ for ${\beta}$ in any $2$-level admissible intervals.
\begin{theorem}\label{th: leve two}
 Given $N\ge 2$, let $[{{\beta}_L}, {{\beta}_U}]$ be an admissible interval generated by an admissible block $t_1 t_2$. Then $\lceil(N-1)/2\rceil\le t_1\le N-1$, $\overline{t_1}\le t_2<t_1$, and for any ${\beta}\in[{{\beta}_L}, {{\beta}_U}]$ the Hausdorff dimension of $\U{N}$ is given by
{\small\[
  \dim_H\mathbf{U}_{{{\beta}},N}=\frac{\log (2{{t}_1}+1-N+\sqrt{(2{{t}_1}+1-N)^2+4(2{t_2}+2-N)}\;)-\log 2}{\log{{\beta}}}.
\]}
 \end{theorem}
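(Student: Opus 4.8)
The plan is to deduce the formula from Theorem~\ref{th: main results}. Since $[{\beta}_L,{\beta}_U]$ is an admissible interval generated by the block $t_1t_2$ of length $p=2$, that theorem already gives $\dim_H\U{N}=h(Z_{t_1t_2})/\log{\beta}$ for every ${\beta}\in[{\beta}_L,{\beta}_U]$, so only two things remain. The first is the digit bounds: writing out Definition~\ref{def:admissible block} for $p=2$ and $i=1,2$, the non-trivial requirements are $\overline{t_1t_2}\le t_1t_2$, $\overline{t_1t_2}\le t_2t_1$ and $t_2^{+}\overline{t_1}\le t_1t_2^{+}$; a one-line comparison of leading digits turns these, respectively, into $t_1\ge\overline{t_1}$ (hence $t_1\ge\lceil(N-1)/2\rceil$), $t_2\ge\overline{t_1}$, and $t_2<t_1$, which together with the automatic bound $t_1\le N-1$ is exactly the asserted range (and forces $t_2\le N-2$ as well).

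The second, and the real content, is the topological entropy of $Z_{t_1t_2}$. Because every window of length two in a sequence of $Z_{t_1t_2}$ lies in $[\,\overline{t_1t_2},\,t_1t_2\,]$, every digit that occurs lies in the alphabet $A=\{\overline{t_1},\overline{t_1}+1,\dots,t_1\}$, of size $2t_1-N+2$, and $Z_{t_1t_2}$ is exactly the vertex shift on $A$ with transition matrix $M_{ab}=\mathbf 1_{\{\overline{t_1t_2}\le ab\le t_1t_2\}}$; hence $h(Z_{t_1t_2})=\log\rho(M)$ with $\rho(M)$ the Perron root, by the standard theory of subshifts of finite type (cf.~\cite{Lind_Marcus_1995}). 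I would then split $A$ into the two boundary letters $\overline{t_1},t_1$ and the $2t_1-N$ interior ones, and count transitions: an interior letter may be followed by any letter of $A$; from $\overline{t_1}$ the admissible successors are precisely $\{\overline{t_2},\dots,t_1\}$ (using $\overline{t_1}\le t_2$ and $\overline{t_2}\le t_1$ from the first step), i.e.\ $t_1$ once, no other boundary letter, and $t_1+t_2-N+1$ interior letters, and symmetrically for $t_1$. Using the reflection symmetry $c\mapsto\overline c$ of $Z_{t_1t_2}$, which makes the number of length-$n$ admissible words ending in $\overline{t_1}$ equal to that of those ending in $t_1$, the counting reduces to iterating the $2\times2$ nonnegative matrix with first row $(1,1)$ and second row $(\,2(t_1+t_2-N+1),\,2t_1-N\,)$ on the coordinates $(u_n,w_n)$, where $u_n$ counts the length-$n$ words ending in $\overline{t_1}$ (equivalently those ending in $t_1$) and $w_n$ those ending in an interior letter. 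Its dominant eigenvalue $\lambda$ satisfies $\lambda^{2}=(2t_1+1-N)\lambda+(2t_2+2-N)$, so $h(Z_{t_1t_2})=\log\lambda$, and solving the quadratic yields the numerator in the statement.

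The main obstacle is the bookkeeping in the second step: pinning down the transition counts at the two boundary letters, and then checking that the closed formula survives the degenerate configurations unchanged — notably $t_2=\overline{t_1}$ (then $\overline{t_1}$ can only be followed by $t_1$, so the bottom-left entry of the $2\times2$ matrix vanishes) and $2t_1=N$ (then $A$ has no interior letter, so $Z_{t_1t_2}$ is finite and the entropy is $0$). In every case one must still verify that $A^{2}+4B\ge0$ for $A=2t_1+1-N$, $B=2t_2+2-N$ (indeed $A^{2}+4B=(2t_1-N-1)^{2}+8(t_1+t_2-N+1)\ge0$ since admissibility gives $t_1+t_2\ge N-1$), that $\lambda\ge1$, and that the nonnegative initial count vector is not orthogonal to the dominant left eigenvector, so that the exponential growth rate of the word count is exactly $\lambda$; granting this, the identification $h(Z_{t_1t_2})=\log\lambda$ together with Theorem~\ref{th: main results} finishes the proof.
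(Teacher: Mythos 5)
Your proposal is correct and follows essentially the same route as the paper: reduce via Theorem \ref{th: main results} to the entropy of the subshift of finite type $Z_{t_1t_2}$ on the alphabet $\{\overline{t_1},\dots,t_1\}$, and compute the logarithm of the Perron root of its transition matrix. The only difference is that the paper writes down the full $(2t_1+2-N)\times(2t_1+2-N)$ adjacency matrix and simply asserts its spectral radius, whereas your reflection-symmetry lumping to the $2\times2$ system with characteristic equation $\lambda^{2}=(2t_1+1-N)\lambda+(2t_2+2-N)$ actually derives it (and your boundary-transition counts and treatment of the degenerate cases check out).
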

\begin{proof}
Since $t_1t_2$ is an admissible block, by Definition \ref{def:admissible block} it follows that
\[
\overline{t_1}\le t_1\le N-1\quad\textrm{and} \quad \overline{t_1}\le t_2<t_1.
\]
By Theorem \ref{th: main results}  it suffices to calculate the entropy of $Z_{t_1t_2}$.

 Let $\mathcal{G}=\{G, {V},{E}\}$ be an edge graph representing the shift of finite type $Z_{t_1t_2}$, where the vertex set
  ${V}=\{\overline{t_1},\overline{t_1}+1,\cdots, t_1\}$ and the edge set ${E}$ consists of all edges $uv$ satisfying  $\overline{t_1 t_2}\le uv\le t_1 t_2$ for $u, v\in {V}$. Note that the entropy of $Z_{t_1 t_2}$ can be calculated via the spectral radius of the adjacency matrix $A$ of the edge graph $\mathcal{G}$ (cf.~\cite{Lind_Marcus_1995}), where $A$ is of size $(t_1-\overline{t_1}+1)\times(t_1-\overline{t_1}+1)$
 given by
\begin{equation*}
A=\left(\begin{array}{ccccccc}
  0&0&\cdots&0&1&\cdots&1\\
  1&1&1&\cdots&\cdots&\cdots&1\\
  1&1&1&1&\cdots&\cdots&1\\
  \vdots&&&\ddots&&&\vdots\\
  1&\cdots&\cdots&1&1&1&1\\
  1&\dots&\dots&\dots&1&1&1\\
  1&\cdots&1&0&\cdots&0&0
\end{array}\right).
\end{equation*}
Here the  total number of zeros on the top and the bottom rows are both equal to $t_1-\overline{t_2}+1=t_2-\overline{t_1}+1$.
Then
\[
h(Z_{t_1t_2})=\log\frac{(2{t_1}+1-N)+\sqrt{(2{t_1}+1-N)^2+4(2{t_2}+2-N)}}{2}.
\]
This completes the proof.
\end{proof}
The authors in \cite{Glendinning_Sidorov_2001,Kong_Li_Dekking_2010} showed that $\dim_H\U{N}=0$ when ${\beta}={\beta}_c(N)$. This can also be viewed  by
Theorem \ref{th: leve1 one} and \ref{th: leve two}.
\begin{corollary}\label{cor:1}
  Given  $N\ge 2$, for any ${\beta}\in[G_N, {\beta}_c(N)]$ we have $\dim_H\U{N}=0$.
\end{corollary}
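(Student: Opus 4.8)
The plan is to realize $[G_N,\beta_c(N)]$ as one of the lowest-level admissible intervals and then read off $\dim_H\U{N}$ directly from Theorems~\ref{th: leve1 one} and \ref{th: leve two}; I would treat $N$ odd and $N$ even separately, exhibiting the generating block in each case. Suppose first that $N=2k+1$, and take the length-one block $t_1=k=\lceil(N-1)/2\rceil$. Since $\overline{t_1}=N-1-k=k=t_1$, both inequalities of Definition~\ref{def:admissible block} hold trivially, so $t_1$ is admissible. From the computation of $(\al_i(G_N))$ in the proof of Proposition~\ref{prop:DK-2} together with (\ref{eq: gn}) one has $(\al_i(G_N))=(\lceil(N-1)/2\rceil\,\overline{\lceil(N-1)/2\rceil})^\f=k^\f=(t_1)^\f$, so $\beta_L=G_N$ by the strict monotonicity in Proposition~\ref{prop:1}. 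On the other hand $t_1^+=k+1=\lceil N/2\rceil$, whence $(\th_i(t_1^+))=(\th_i(\lceil N/2\rceil))=(\la_i(N))$ by the identity recorded in the Introduction; by Theorem~\ref{th: DK-admissible} this sequence is the unique, hence quasi-greedy, expansion of $1$ in the base $\beta_U$ with $1=\sum_i\la_i\beta_U^{-i}$, which is exactly the Komornik--Loreti constant $\beta_c(N)$. Thus $[\beta_L,\beta_U]=[G_N,\beta_c(N)]$.

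Now suppose $N=2k$, and take the length-two block $t_1t_2=k(k-1)$. Since $\overline{t_1}=k-1=t_2<t_1=k$, a one-line lexicographic check in Definition~\ref{def:admissible block} shows $t_1t_2$ is admissible, with $\lceil(N-1)/2\rceil=k=t_1$. As before $(\al_i(G_N))=(k\,\overline{k})^\f=(k(k-1))^\f=(t_1t_2)^\f$, so $\beta_L=G_N$. For the right endpoint, $t_1t_2^+=kk$, and the length-two generalized Thue--Morse sequence $(\th_i(kk))$ coincides with the length-one sequence $(\th_i(k))=(\th_i(\lceil N/2\rceil))=(\la_i(N))$: both begin $\th_1\th_2=kk$ (for the length-one sequence $\th_2=\overline{\th_1}\,^+=(k-1)^+=k$), and from the third coordinate on the doubling recursions of Definition~\ref{def:generalized Thue-Morse sequence} are identical, so the two sequences are equal by induction. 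As in the odd case, Theorem~\ref{th: DK-admissible} then yields $\beta_U=\beta_c(N)$, so again $[\beta_L,\beta_U]=[G_N,\beta_c(N)]$.

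Finally I would substitute into the explicit formulas. For $N=2k+1\ge3$, Theorem~\ref{th: leve1 one} gives, for every $\beta\in[G_N,\beta_c(N)]=[\beta_L,\beta_U]$,
\[
\dim_H\U{N}=\frac{\log(2t_1+2-N)}{\log\beta}=\frac{\log\big(2k+2-(2k+1)\big)}{\log\beta}=\frac{\log 1}{\log\beta}=0 .
\]
For $N=2k$, Theorem~\ref{th: leve two} with $t_1=k$ and $t_2=k-1$ gives $2t_1+1-N=1$ and $2t_2+2-N=0$, so
\[
\dim_H\U{N}=\frac{\log\!\big(1+\sqrt{1+0}\,\big)-\log 2}{\log\beta}=\frac{\log 2-\log 2}{\log\beta}=0 .
\]
Since the admissible interval is closed, this also covers $\beta=\beta_c(N)$, which completes the argument.

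I expect the only real work to be the bookkeeping in the middle steps: verifying admissibility of the generating block and, above all, matching $\beta_U$ with the Komornik--Loreti constant, including the harmless reconciliation of the length-one and length-two descriptions of the generalized Thue--Morse sequence when $N$ is even; no idea beyond Theorems~\ref{th: leve1 one} and \ref{th: leve two} is required. Alternatively one may bypass those formulas entirely, since in these cases the subshift $Z_{t_1\cdots t_p}$ of Theorem~\ref{th: main results} is finite --- it is $\{k^\f\}$ when $N$ is odd and $\{(k(k-1))^\f,((k-1)k)^\f\}$ when $N$ is even --- and therefore has zero topological entropy.
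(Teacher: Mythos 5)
Your proof is correct and follows essentially the same route as the paper's: identifying $[G_N,\beta_c(N)]$ as the admissible interval generated by the block $k$ when $N=2k+1$ and by $k(k-1)$ when $N=2k$, then reading off zero from the formulas of Theorems~\ref{th: leve1 one} and \ref{th: leve two}. You simply fill in more of the bookkeeping (admissibility checks, the identification $(\th_i(kk))=(\th_i(k))=(\la_i(N))$) that the paper leaves as "one can check."
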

\begin{proof}
We split the proof  into the following two cases.

Case I. $N=2k$. By Equations (\ref{eq: gn}) and (\ref{eq: lambda}) it follows that
\[
(\alpha_i(G_N))=(k(k-1))^\f\quad\textrm{and}\quad(\alpha_i({\beta}_c(N)))=(\th_i(kk)).
\]
So, $[G_N, {\beta}_c(N)]$ is an admissible interval generated by the admissible block $k(k-1)$. By Theorem \ref{th: leve two} it follows that for  ${\beta}= {\beta}_c(N)$ the set $\U{N}$ has zero Hausdorff dimension.

Case II. $N=2k+1$. By Equations (\ref{eq: gn}) and (\ref{eq: lambda}) one can check that
$[G_N, {\beta}_c(N)]$ is an admissible interval generated by the admissible  block $k$. Then  by Theorem \ref{th: leve1 one} it follows that  for  ${\beta}= {\beta}_c(N)$ we have $\dim_H\U{N}=0$.
\end{proof}

\begin{example}\label{ex£º2}
  Let $N=20$. According to Theorem \ref{th: leve1 one} and Theorem \ref{th: leve two}, we plot in Figure \ref{fig:2} the graph of the Hausdorff dimension
  $\dim_H\U{20}$ of $\U{20}$ for ${\beta}\in({\beta}_c(20), 20)$.  Clearly,  the 1-level and 2-level admissible intervals cover a large part of $[{\beta}_c(N),N)$. By Theorem \ref{th: main results-1} the union of all admissible intervals covers almost every point of $(\beta_c(N),N)$. Thus, the dimension function $\dim_H\U{N}$ has a devil's-staircase-like behavior.
  \begin{figure}[h!]
  {\centering
  \includegraphics[width=6cm]{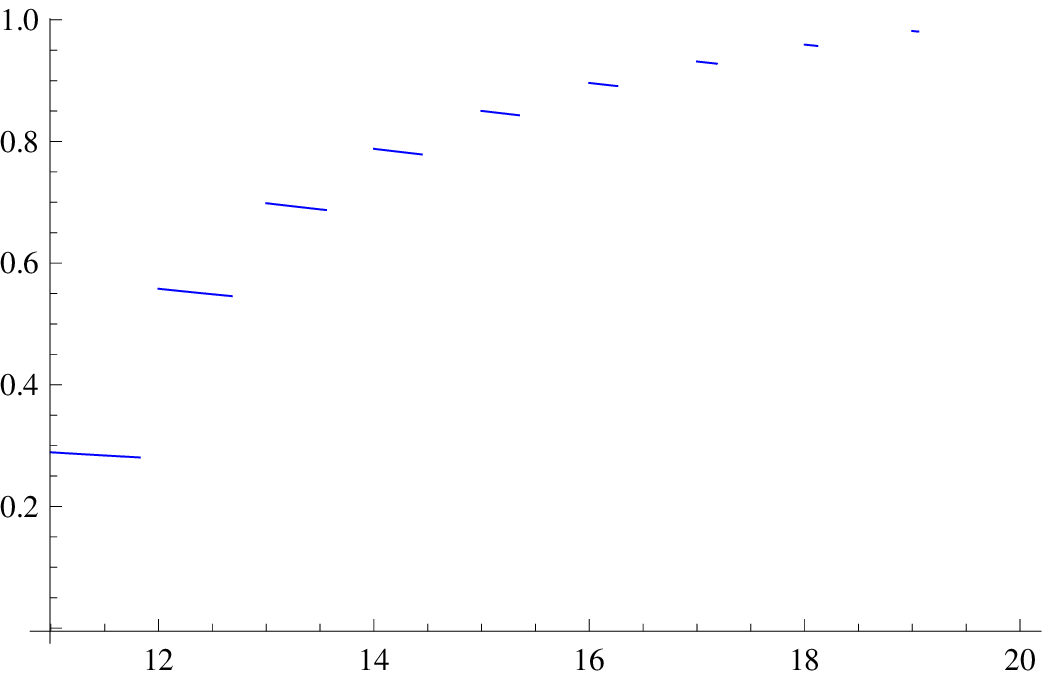}\quad\includegraphics[width=6cm]{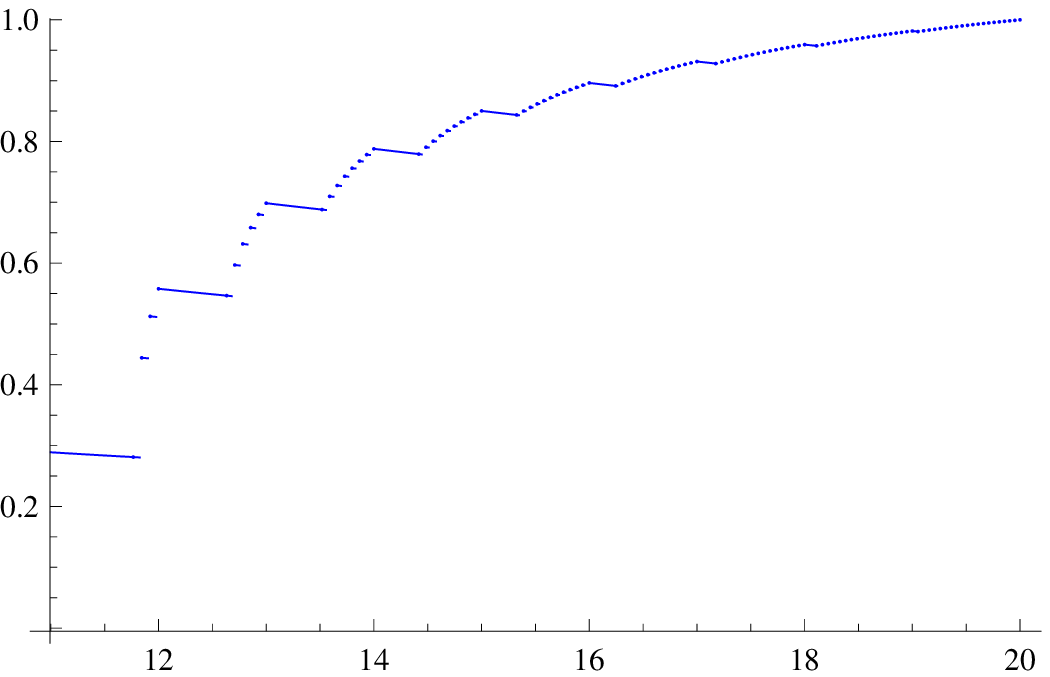}\\
  \caption{The Hausdorff dimension of $\U{20}$ for ${\beta}\in({\beta}_c(20), 20)$. In the left column ${\beta}$ is in the 1-level admissible intervals; In the right column ${\beta}$ is in the 1-level and 2-level admissible intervals.}\label{fig:2}
  }
\end{figure}
\end{example}

\section*{Acknowledgement}
The authors thank the anonymous referees for many suggestions and remarks.
The first author would like to thank  Vilmos Komornik and Michel Dekking for some suggestions on the previous versions of the manuscript, and to G\'{a}bor Kall\'{o}s and Simon Baker for  providing
some useful references. In particular, we thank Martijn De Vries for correcting an error we made in Theorem 2.5 in the original version. The first author is supported by the National  Natural Science Foundation of China no 11326207, 11401516 and JiangSu Province Natural Science Foundation for the Youth no BK20130433. The second author is
supported by the National  Natural Science Foundation of China no 11271137.

\end{document}